\documentclass[12pt]{amsart}
\makeatletter
\let\@wraptoccontribs\wraptoccontribs
\makeatother

\title{The Donaldson-Thomas partition function of the Banana Manifold}
\author{Jim Bryan}
\contrib[Appendix with ]{Stephen Pietromonaco}

\thanks{This material is based upon work supported by the
National Science Foundation under Grant No. 1440140, while the author
was in residence at the Mathematical Sciences Research Institute in
Berkeley, California, during the Spring semester 2018. The author was
also supported by NSERC.}
\date{\today}


\usepackage[framemethod=tikz]{mdframed} 
\usepackage{tablefootnote} 
\makeatletter 
\AfterEndEnvironment{mdframed}{%
 \tfn@tablefootnoteprintout%
 \gdef\tfn@fnt{0}%
}
\makeatother

\usepackage{extsizes}
\usepackage{blindtext}

\usepackage{amsmath}
\usepackage{amsmath,amsthm,amsfonts,amssymb}
\usepackage{times}
\usepackage{relsize}

\usepackage{tikz-cd}
\usepackage{pgfplots}
%

\usepackage{tikz}
\usepackage{verbatim}

\usetikzlibrary{calc,3d}

\definecolor{tealgreen}{HTML}{1B9E77}
\definecolor{orange}{HTML}{D95F02}
\definecolor{purple}{HTML}{7570B3}
\definecolor{pink}{HTML}{E7298A}
\definecolor{grassgreen}{HTML}{66A61E}
\definecolor{goldyellow}{HTML}{E6AB02}
\definecolor{brown}{HTML}{A6761D}
\definecolor{devilgray}{HTML}{666666}

\newtheorem{theorem}{Theorem}
\newtheorem{proposition}[theorem]{Proposition}

\newtheorem{lemma}[theorem]{Lemma}
\newtheorem{corollary}[theorem]{Corollary}

\newtheorem{def-theorem}[theorem]{Definition-Theorem}

\theoremstyle{definition}

\newtheorem{remark}[theorem]{Remark}
\newtheorem{definition}[theorem]{Definition}

\newcommand{\CC} {{\mathbb C}}          
\newcommand{\NN} {{\mathbb N}}		
\newcommand{\ZZ} {{\mathbb Z}}		
\newcommand{\PP} {{\mathbb P}}		
\newcommand{\HH} {{\mathbb H}}		

\newcommand{\Li}{\operatorname{Li}}
\newcommand{\Ell}{\operatorname{Ell}}

\newcommand{\Ker}{\operatorname{Ker}}

\newcommand{\Hilb}{\operatorname{Hilb}}
\newcommand{\Sym}{\operatorname{Sym}}
\newcommand{\Spec}{\operatorname{Spec}}
\newcommand{\Tot}{\operatorname{Tot}}

\newcommand{\Mod}{\mathsf{Mod}}
\newcommand{\Jac}{\mathsf{Jac}}
\newcommand{\Siegel}{\mathsf{Siegel}}

\newcommand{\norm}{\mathsf{norm}}
\newcommand{\sing}{\mathsf{sing}}
\newcommand{\Sch}{\mathsf{Sch}}
\newcommand{\ban}{\mathsf{ban}}
\newcommand{\inst}{\mathsf{prod}}
\newcommand{\hook}{\mathsf{hook}}
\newcommand{\DT}{\mathsf{DT}}
\newcommand{\DTss}{ \, }

\newcommand{\VertexTilde}{\widetilde{\mathsf{V}}}
\newcommand{\OO}{\mathcal{O}}

\newcommand{\Bl}{\operatorname{Bl}}
\newcommand{\dvec}{\mathbf{\underline{d}}}
\newcommand{\avec}{\mathbf{\underline{a}}}
\newcommand{\betadvec}{\beta_{\dvec}}
\newcommand{\zerovec}{\mathbf{\underline{0}}}
\newcommand{\half}{\frac{1}{2}}
\newcommand{\Var}{\operatorname{Var}}
\newcommand{\Vsf}{\mathsf{V}}
\newcommand{\Vtildesf}{\widetilde{\Vsf}}
\newcommand{\ptotheminusrho}{p^{-\rho}}
\newcommand{\betacomman}{\beta, n }

\newcommand{\Fhat}{\widehat{F}}
\newcommand{\Ohat}{\widehat{\mathcal{O}}}
\newcommand{\HilbHat}{\widehat{\Hilb}}
\newcommand{\MW}{\mathsf{MW}}
\newcommand{\Conf}{\operatorname{Conf}}
\newcommand{\Chow}{\operatorname{Chow}}
\newcommand{\Pic}{\operatorname{Pic}}
\newcommand{\Stab}{\operatorname{Stab}}
\newcommand{\Ext}{\operatorname{Ext}}

\begin{document}

\begin{abstract}
A banana manifold is a compact Calabi-Yau threefold, fibered by
Abelian surfaces, whose singular fibers have a singular locus given by
a ``banana configuration of curves''. A basic example is given by
$X_{\ban}:=\Bl_{\Delta}(S\times_{\PP^{1}}S)$, the blowup along the diagonal of
the fibered product of a generic rational elliptic surface $S\to
\PP^{1}$ with itself.

In this paper we give a closed formula for the Donaldson-Thomas
partition function of the banana manifold $X_{\ban }$ restricted to
the 3-dimensional lattice $\Gamma$ of curve classes supported in the
fibers of $X_{\ban}\to \PP^{1}$. It is given by
\[
Z^{\DTss}_{\Gamma}(X_{\ban}) = \prod_{d_{1},d_{2},d_{3}\geq 0} \prod_{k}
\left(1-p^{k}Q_{1}^{d_{1}}Q_{2}^{d_{2}}Q_{3}^{d_{3}}\right)^{-12c(||\dvec ||,k)}
\]
where $||\dvec || = 2d_{1}d_{2}+ 2d_{2}d_{3}+
2d_{3}d_{1}-d_{1}^{2}-d_{2}^{2}-d_{3}^{2}$, and the coefficients
$c(a,k)$ have a generating function given by an explicit ratio of
theta functions.  This formula has interesting properties and is
closely related to the equivariant elliptic genera of $\Hilb
(\CC^{2})$. In an appendix with S. Pietromonaco, it is shown that the
corresponding genus $g$ Gromov-Witten potential $F_{g}$ is a
genus 2 Siegel modular form of weight $2g-2$ for $g\geq 2$, namely it
is the Skoruppa-Maass lift of a multiple of an Eisenstein series,
namely $\frac{6|B_{2g}|}{g(2g-2)!} E_{2g}(\tau )$.
\end{abstract}

\maketitle 


\tableofcontents

\section{Introduction}

\subsection{Donaldson-Thomas invariants.}

The Donaldson-Thomas invariants of a Calabi-Yau threefold $X$ encode
subtle information about the enumerative geometry of $X$. They are a
mathematical incarnation of counts of BPS states in B-model
topological string theory compactified on $X$.

Let $X$ be a Calabi-Yau threefold, that is a smooth complex threefold
with trivial canonical class. Let $\beta \in H_{2}(X,\ZZ )$ be a curve
class, let $n\in \ZZ$, and let 
\[
\Hilb^{\beta ,n}(X) = \left\{Z\subset X:\quad [Z] = \beta ,\, \chi (\OO_{Z})=n \right\}
\]
be the Hilbert scheme parameterizing dimension one subschemes in the
class $\beta$ and having holomorphic Euler characteristic $n$. The
Donaldson-Thomas invariant $\DT_{\beta ,n}(X)$ can be defined
\cite{Behrend-Micro} as a weighted Euler characteristic of the Hilbert scheme:
\[
\DT_{\beta ,n}(X) = e\left(\Hilb^{\beta ,n}(X),\nu \right):= \sum_{k\in
\ZZ} k\cdot e\left(\nu^{-1}(k) \right)
\]
where $e(\cdot)$ is topological Euler characteristic and 
\[
\nu :\Hilb^{\beta ,n}(X) \to \ZZ 
\]
is Behrend's constructible function. One can regard $\DT_{\beta ,n}(X)$
as a virtual count of the number of curves in the class $\beta$ with Euler
characteristic $n$.

The Donaldson-Thomas partition function is a generating function for
the invariants which we define\footnote{Our insertion of a minus sign
is somewhat non-standard.} as
\[
Z^{\DTss}(X) = \sum_{\beta \in H_{2}(X,\ZZ )}\sum_{n\in \ZZ} \DT_{\beta
,n}(X)\, Q^{\beta} (-p)^{n}.
\]

Here $Q^{\beta} = Q_{1}^{d_{1}}\dotsb  Q_{r}^{d_{r}}$ where $\beta
=d_{1}C_{1}+\cdots +d_{r}C_{r}$ and $\{C_{1},\dots ,C_{r} \}$ is a
basis for $H_{2}(X,\ZZ )$ chosen so that $d_{i}\geq 0$ for all
effective curve classes. $Z^{\DTss}(X)$ is then a formal power series in
$Q_{1},\dots ,Q_{r}$ whose coefficients are formal Laurent series in
$p$.

The Donaldson-Thomas partition function is very hard to compute:

\vspace{.25in}

\begin{mdframed}[backgroundcolor=lightgray]
Currently, there is not a single compact Calabi-Yau threefold $X$
for which $Z^{\DTss}(X)$ is completely known, not even
conjecturally\tablefootnote{Strictly speaking, we do know (for elementary
reasons) that $Z^{\DTss}(X)=1$ when $X$ is an Abelian threefold, a
product of a $K3$ surface and an elliptic curve, or any other
threefold with a free action by an Abelian variety. There is a
modification of the Donaldson-Thomas invariants which is non-trivial
in those cases, and for Abelian threefolds, we do have a complete, but
conjectural, answer for the partiton function of the modified
invariants \cite{BOPY}.}.
\end{mdframed}

\vspace{.25in}

Given some sublattice $\Gamma \subset H_{2}(X,\ZZ )$, we can define a
restricted partition function:
\[
Z^{\DTss}_{\Gamma}(X)  =  \sum_{\beta \in \Gamma }\sum_{n\in \ZZ} \DT_{\beta
,n}(X)\, Q^{\beta} (-p)^{n}.
\]

Even for restricted partition functions, there are very few results
for compact $X$. For elliptic or $K3$ fibrations $\pi :X \to B$, the
restricted partition functions $Z^{\DTss}_{\Gamma}(X)$ have been computed
for $\Gamma =\Ker (\pi_{*}:H_{2}(X)\to H_{2}(B))$, i.e. fiber classes,
by Toda \cite[Thm 6.9]{Toda-2012-Kyoto} in the case of elliptic
fibrations and Maulik, Pandharipande, and Thomas
\cite{Maulik-Pandharipande-Thomas,Pandharipande-Thomas-KKV} in the
case of $K3$ fibrations. In the case of $K3$ fibrations, the partition
functions exhibit modularity properties.

In this paper we compute $Z_{\Gamma}(X)$ where $X$ is a certain kind
of Calabi-Yau threefold (a banana manifold) and $\Gamma$ is a rank 3
lattice. We give an explicit product formula for $Z_{\Gamma}(X)$, we
derive a generating function for the corresponding Gopakumar-Vafa
invariants, and we show (assuming the GW/DT correspondence) that the
associated Gromov-Witten potentials are Siegel modular forms.

\subsection{Banana manifolds and their partition functions}

We study a class of compact Calabi-Yau threefolds which we call banana
manifolds. The basic example, which we denote $X_{\ban}$, is defined
as follows\footnote{ Essentially this construction of the banana
manifold is mentioned briefly in \cite[End of section
5.2]{Kanazawa-Lau}; we learned about this from Georg Oberdieck.  }.
\begin{definition}\label{def: banana manifold}
Let $S\to \PP^{1}$ be a generic rational elliptic surface. Let
\[
X_{\ban} = \Bl_{\Delta}\left(S\times_{\PP^{1}}S \right)
\]
be the fibered product of $S$ with itself blown up along the diagonal
$\Delta$. See Figure~\ref{fig: the banana manifold}. 
\end{definition}

\begin{center}

\begin{figure}
\centering
\begin{tikzpicture}[
                    z  = {-15},
		    scale = 0.75]

\begin{scope}[yslant=-0.3,xslant=0]


\begin{scope} [canvas is yz plane at x=0]
\draw [black](0,0) rectangle (3,5);
\draw [pink, ultra thick, domain=0:3] 
(0,0)--(3,5);
\end{scope}
\begin{scope} [canvas is xz plane at y=0]
\draw [black](0,0) rectangle (4,5);
\end{scope}
\draw [black](0,0) rectangle (4,3);

\draw [ultra thick,orange] 
                   (1  ,0   ,5)
to [out=90,in=-90] (1  ,0.6 ,5)
to [out=90,in=-90] (0.5,1.5 ,5)
to [out=90,in=-90] (1  ,2.4 ,5)
to [out=90,in=-90] (1  ,3   ,5);
\draw [ultra thick,orange] 
                   (2  ,0   ,5)
to [out=90,in=-90] (2  ,0.6 ,5)
to [out=90,in=-90] (1.5,1.5 ,5)
to [out=90,in=-90] (2  ,2.4 ,5)
to [out=90,in=-90] (2  ,3   ,5);


\draw [ultra thick,pink] (0,3,5)--(4,3,5);
\draw [ultra thick,pink] (0,0,0)--(4,0,0);

\draw [ultra thick,orange] 
                    (3   ,0   ,5) 
to [out=90,in=0]    (2.3 ,1.8 ,5) 
to [out=180,in=90]  (2   ,1.5 ,5) 
to [out=270,in=180] (2.3 ,1.2 ,5) 
to [out=0,in=270]   (3   ,3   ,5);

\node [right] at (2.8,1.5,5) {$n_{j}$};
\node [left] at (0,2.0,5) {$S$};
\node [left] at (0,0.5,5) {$B$};


\begin{scope} [canvas is yz plane at x=4]
\draw [black](0,0) rectangle (3,5);
\draw [pink, ultra thick, domain=0:3] 
(0,0)--(3,5);
\draw(1.5,2.5) node[above]{$\Delta $};
\end{scope}

\begin{scope} [canvas is xz plane at y=3]
\draw [black](0,0) rectangle (4,5);
\draw [ultra thick,orange]
(3,0)
to (3,1) [out=90,in=0]
to (2.3,3.0) [out=180,in=90]
to (2.0,2.5) [out=-90,in=180]
to (2.3,2.0) [out=0,in=-90]
to (3,4) [out=90,in=-90]
to (3,5);
\draw [ultra thick,orange]
(1,0)
to (1,1) [out=90,in=-90]
to (0.5,2.5) [out=90,in=-90]
to (1,4) [out=90,in=-90]
to (1,5);
\draw [ultra thick,orange]
(2,0)
to (2,1) [out=90,in=-90]
to (1.5,2.5) [out=90,in=-90]
to (2,4) [out=90,in=-90]
to (2,5);

\end{scope}

\draw [black](0,0,5) rectangle (4,3,5);
\draw [black,fill, opacity=0.1](0,0,5) rectangle (4,3,5);

\end{scope}
\end{tikzpicture}
\caption{The banana manifold $X_{\ban}=\Bl(S\times_{\PP^{1}}S)$. 
}\label{fig: the banana manifold}
\end{figure}
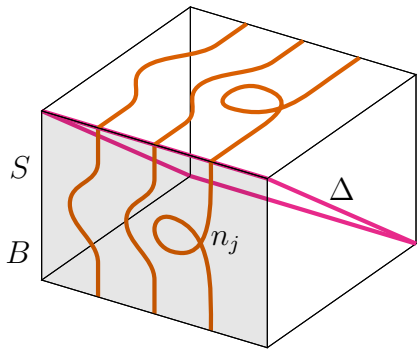

\end{center}

The map $S\to \PP^{1}$ is singular at 12 points which gives rise to 12
conifold singularities in the fibered product which all lie on the
divisor $\Delta$. Consequently, $X_{\ban}$ is a conifold resolution of
$S\times_{\PP^{1}}S$. $X_{\ban}$ is a simply connected, compact
Calabi-Yau threefold with Hodge numbers $h^{1,1}(X_{\ban})=20$ and
$h^{1,2}(X_{\ban})=8$ (see \S~\ref{sec: geom of ban man}).

The generic fiber of $\pi :X_{\ban} \to \PP^{1}$ is $E\times E$, the
product of an elliptic curve with itself and $\pi$ has 12 singular
fibers which are non-normal toric surfaces, $F_{\sing}$, each of which
is a compactification of $\CC^{*}\times \CC^{*}$ by a \emph{banana
configuration} of $\PP^{1}$'s.
\begin{definition}\label{defn: banana configuration}
A \emph{banana configuration}\footnote{The SYZ mirror of this
configuration was studied by Abouzaid, Auroux, and Katzarkov in
\cite{Abouzaid-Auroux-Katzarkov}. The banana configuration also
appears in the Landau-Ginzburg mirror of a genus two curve as studied
by Gross, Katzarkov, and Ruddat in \cite{Gross-Katzarkov-Ruddat} and
Ruddat in \cite{Ruddat}. } in a Calabi-Yau threefold $X$ is a union
$C=C_{1}\cup C_{2}\cup C_{3}$ of three curves $C_{i}\cong \PP^{1}$
with $N_{C_{i}/X}\cong \OO (-1)\oplus \OO (-1)$ and such that
$C_{1}\cap C_{2}=C_{2}\cap C_{3}=C_{3}\cap C_{1}=\{p,q \}$ where
$p,q\in X$ are distinct points. Moreover, there exist coordinates on
formal neighborhoods of $p$ and $q$ such that the curves $C_{i}$ are
given by the coordinate axes in those coordinates. See
figure~\ref{fig: banana configuration}; the meaning of the picture on
the right is discussed in section~\ref{subsec: Mordell weil gps and actions}.
\end{definition}

\begin{figure}
\centering
\begin{tikzpicture}[xshift=5cm,
		    scale = 1.0
		    ]

\begin{scope}  
\draw (0,0) ellipse (2.4 and 2);
\draw (0,0) ellipse (0.6cm and 2cm);
\draw (0,0) ellipse (1.2cm and 2cm);
\draw (-0.6,0) arc(180:360:0.6 and 0.3);
\draw[dashed](-0.6,0) arc(180:0:0.6cm and 0.3cm);
\draw (1.2,0) arc(180:360:0.6cm and 0.3cm);
\draw[dashed](1.2,0) arc(180:0:0.6cm and 0.3cm);
\draw (-2.4,0) arc(180:360:0.6cm and 0.3cm);
\draw[dashed](-2.4,0) arc(180:0:0.6cm and 0.3cm);

\draw (0,-2) node[below] {$p$};
\draw(0,2) node[above] {$q$};
\draw(0,0.6) node {$C_{2}$};
\draw(-1.8,0.6) node {$C_{1}$};
\draw(1.8,0.6)node {$C_{3}$};
\end{scope}


\begin{scope}[xshift=4.5cm,yshift=-2cm]
\draw (0,1.5)--(1.5,1.5)--(2.5,2.5)--(4,2.5);
\draw (1.5,0)--(1.5,1.5)--(2.5,2.5)--(2.5,4);
\draw (0.5,1.5)node{$||$};
\draw (3.5,2.5)node{$||$};
\draw (1.5,0.5)node{$-$};
\draw (2.5,3.5)node{$-$};
\draw (2.5,3.1)node[left]{$C_{3}$};
\draw (2,2)node[below right]{$C_{2}$};
\draw (0.9,1.5)node[above]{$C_{1}$};
\draw (1.5,1.5)node[below left]{$p$};
\draw (2.5,2.5)node[above right]{$q$};
\end{scope}

\end{tikzpicture}
\caption{The banana configuration of $\PP^{1}$'s.  
}
\label{fig: banana configuration}
\end{figure}
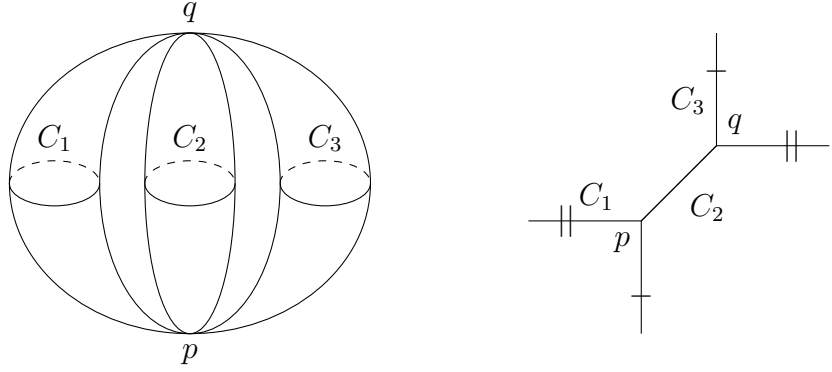

The banana curves $C_{1}, C_{2}, C_{3}$ of any of the singular fibers
generate 
\[
\Gamma= \Ker \pi_{*}\subset H_{2}(X_{\ban},\ZZ ),
\]
the lattice of the fiber classes (Lemma~\ref{lem: C1,C2,C3 span fiber classes}). Let
\[
\beta_{\dvec} = d_{1}C_{1}+ d_{2}C_{2}+ d_{3}C_{3}
\]
and define
\[
||\dvec || = 2d_{1}d_{2}+ 2d_{2}d_{3}+ 2d_{3}d_{1}-d_{1}^{2}-d_{2}^{2}-d_{3}^{2}.
\]
This quadratic form is twice the intersection form of a smoth fiber,
by which we mean that if $\beta_{\dvec}$ is represented by a cycle $C$
supported on a smooth fiber, then $||\dvec || = 2 C\cdot C$ where
$C\cdot C$ is the self-intersection of $C$ in the smooth fiber.

We define banana manifolds in general as follows.
\begin{definition}\label{defn: banana manifolds}
We say that a compact Calabi-Yau threefold $X$ is a \emph{banana
manifold} with $N$ banana fibers if there is an Abelian surface
fibration $\pi :X\to \PP^{1}$ where the singular locus of $\pi$
consists of $N$ disjoint banana configurations and the smooth locus of
$\pi$ is an Abelian group scheme over $\PP^{1}$ whose natural action
on itself extends to an action on $X$ (see \cite{Bryan-Pietromonaco}
for examples of rigid Banana manifolds). 
\end{definition}

Our main theorem is the following:
\begin{theorem}\label{thm: formula for Zban}
Let $X_{\ban }  $ be the basic banana
manifold and let $\Gamma  = \Ker \pi_{*}\cong \ZZ^{3}$ where $\pi
:X_{\ban}\to \PP^{1}$. Then the Donaldson-Thomas partition function of
$X_{\ban}$, restricted to $\Gamma$ is given by
\[
Z^{\DTss}_{\Gamma}(X_{\ban }) = \prod_{d_{1},d_{2},d_{3}\geq 0} \prod_{k}
\left(1-p^{k}Q_{1}^{d_{1}}Q_{2}^{d_{2}}Q_{3}^{d_{3}}\right)^{-12c(||\dvec ||,k)}
\]
where the second product is over all $k\in \ZZ$ unless
$(d_{1},d_{2},d_{3})=(0,0,0)$ in which case $k>0$, and where the
$c(||\dvec ||,k)$ are positive integers given by
\[
\sum_{a=-1}^{\infty} \,\,\sum_{k\in \ZZ} c(a,k) Q^{a}
y^{k} = \frac{\sum_{k\in \ZZ}
Q^{k^{2}}(-y)^{k}}{\left(\sum_{k\in \ZZ +\half} Q^{2k^{2}}(-y)^{k}
\right)^{2}}  = \frac{\vartheta_{4}(2\tau ,z)}{\vartheta_{1}(4\tau ,z)^{2}}
\]
where $\vartheta_{4}$ and $\vartheta_{1}$ are the usual theta
functions\footnote{Following the conventions from Wolfram Alpha's
Jacobi Theta Function page.} with
$Q = e^{2\pi i \tau }$ and $y=e^{2 \pi i z
}$.
\end{theorem}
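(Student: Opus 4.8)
The plan is to compute $Z^{\DTss}_{\Gamma}(X_{\ban})$ by exploiting the geometry of the Abelian surface fibration $\pi: X_{\ban} \to \PP^1$ together with a degeneration/localization argument, ultimately reducing the global count to a local banana-curve contribution that can be identified with a known equivariant vertex-type computation on $\Hilb(\CC^2)$. First I would set up the standard reduction: since the fiber classes in $\Gamma$ are supported away from a general fiber (which is an honest Abelian surface $E \times E$ carrying a free translation action), the only contributions to $\DT_{\beta_{\dvec},n}$ come from subschemes concentrated near the $12$ singular banana fibers. This is where the factor of $12$ in the exponent originates, and where one uses that the $12$ banana configurations are disjoint and mutually isomorphic, so the partition function is a $12$th power of a single ``local banana'' partition function $Z_{\ban\text{-}\mathrm{loc}}$.

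Next I would compute $Z_{\ban\text{-}\mathrm{loc}}$ via torus localization. Each banana fiber $F_{\sing}$ is a non-normal toric surface (a compactification of $\CC^*\times\CC^*$), so a neighborhood of it in $X_{\ban}$ carries a two-torus action whose fixed locus consists of the two triple points $p,q$ and the toric curves. The DT invariants localize to a sum over torus-fixed ideal sheaves supported on the banana configuration $C_1\cup C_2\cup C_3$, which decomposes into edge contributions (one per $C_i$, each with normal bundle $\OO(-1)\oplus\OO(-1)$, contributing a degree-$d_i$ ``edge term'') glued at the two vertices $p,q$ by vertex terms. The edge terms are governed by the standard $\OO(-1)\oplus\OO(-1)$ local curve formula, and the vertex terms by the equivariant topological vertex / Hilbert scheme of points; the subtlety is that at each of $p,q$ three curves meet, so one gets a ``three-legged'' configuration but with a constraint coming from the banana gluing (the curves are the coordinate axes in local coordinates, so the three legs at $p$ and the three at $q$ are correlated).

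The key identification is then to recognize the resulting generating function — after summing over the $d_i$ and all ideal-sheaf configurations with their Behrend weights — as the equivariant elliptic genus / Hilbert-scheme generating function producing the theta-quotient $\vartheta_4(2\tau,z)/\vartheta_1(4\tau,z)^2$. Concretely, I expect the $\CC^*\times\CC^*$-equivariant DT vertex sum over the banana configuration to reorganize, via a Cauchy-type identity for the relevant symmetric functions (Macdonald/Schur), into a product over partitions that matches the Fock-space character of $\bigoplus_n \Hilb^n(\CC^2)$ with its equivariant elliptic genus — this is the standard mechanism by which $\Hilb(\CC^2)$ partition functions acquire $\eta$- and $\vartheta$-function form. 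Tracking the equivariant parameters: one torus weight becomes $p$ (the $\chi$-variable), the fiber direction of $S\times_{\PP^1}S$ over the node becomes the elliptic parameter $Q = Q_1Q_2Q_3$-ish combination, and the quadratic form $||\dvec||$ emerges as the exponent because it is twice the self-intersection on a smooth fiber, matching the $q^{\text{self-int}}$ weighting in a Göttsche-type formula. The main obstacle will be the vertex/gluing computation at the triple points $p$ and $q$: establishing the precise combinatorial identity that collapses the double sum over three-leg partition configurations (with the banana constraint and Behrend signs) into the clean ratio of theta functions. I would attempt this either by a direct generating-function manipulation (Cauchy identities plus a $\vartheta$-product rearrangement) or, if that proves unwieldy, by a deformation argument relating $X_{\ban}$ to a simpler (e.g. toric or local) model whose DT series is already known, and then pinning down the finitely many unknown structure constants by a low-degree check in $d_1,d_2,d_3$.
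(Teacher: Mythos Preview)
Your proposal is correct and follows essentially the same route as the paper: reduction to the 12 singular fibers via the fiberwise group action (the paper uses the Mordell-Weil group of formal thickenings to ensure the Behrend function is preserved), $\CC^*\times\CC^*$-localization on the banana configuration to obtain a topological-vertex expression indexed by triples of partitions and pairs of 3D partitions, and then a Schur-function computation identifying the result with the equivariant elliptic genus of $\Hilb(\CC^2)$ via the DMVV formula (proved by Waelder). The only point you underweight is that the ``Cauchy identities plus $\vartheta$-rearrangement'' step is where essentially all the work lies --- it is a lengthy derivation, and the identification with $\sum_n \Ell_{q,y}(\Hilb^n(\CC^2),t)\,Q^n$ is not a formal observation but requires Waelder's equivariant crepant-resolution theorem as a genuine external input.
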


\begin{remark}\label{rem: main thm works for general bananafolds}
While we have formulated the Theorem for the basic
banana manifold, our proof will work for banana manifolds in general,
where the 12 in the exponent on the right hand side will be replaced
by $N$, the number of banana configurations. 
\end{remark}

\begin{remark}
The coefficients $c(a,k)$ are also the coefficients of the equivariant
elliptic genus of the plane $\CC ^{2}$, see \S~\ref{sec: elliptic
genera of Hilb(C2)}.  We also note that the right hand side of the
above equation, is a meromorphic Jacobi form of weight $-\half$ and
index -1 for the group $\Gamma (4)$.
\end{remark}

\begin{corollary}[Pietromonaco]\label{cor: Fg's are Siegel forms (Stephen)}
Assuming the Gromov-Witten/Donaldson-Thomas correspondence holds for a
banana manifold $X$, the genus $g$ Gromov-Witten potential function
$F_{g}(Q_{1},Q_{2},Q_{3})$ is a meromorphic Siegel modular form of weight $2g-2$
for all $g\geq 2$ where $Q_{1}=e^{2\pi i z }$, $Q_{2}=e^{2\pi i (\tau
-z)} $, $Q_{3}=e^{2\pi i (\sigma -z)}$, and where
$\left(\begin{smallmatrix} \tau &z\\z&\sigma \end{smallmatrix}
\right)\in \mathbb{H}_{2}$ is in the genus 2 Siegel upper half
plane. Namely, $F_{g}$ is given by the Skoruppa-Maass lift
of $a_{g}E_{2g}(\tau )$, the $2g$-th Eisenstein series multiplied by
the constant $a_{g} = \frac{6|B_{2g}|}{g(2g-2)!}$. See
Appendix~\ref{Appendix: GW potentials are Siegel modular forms} for
full definitions of the terms.
\end{corollary}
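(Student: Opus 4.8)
The plan is to run Theorem~\ref{thm: formula for Zban} through the GW/DT correspondence to obtain a closed-form Fourier expansion for each $F_g$, and then recognise that expansion as the Fourier expansion of an element of the Maass ``Spezialschar''. Taking $\log$ of the product formula and substituting the GW/DT change of variables $p=e^{iu}$ (in the paper's sign convention) gives $\sum_g u^{2g-2}F_g=\log Z^{\DTss}_\Gamma(X_{\ban})=\sum_{\dvec\ge 0}\sum_{k}\sum_{m\ge 1}\frac{12\,c(||\dvec ||,k)}{m}p^{km}Q^{m\beta_{\dvec}}$; expanding $p^{km}=e^{ikmu}$ in $u$ (only even powers survive because $c(a,k)=c(a,-k)$, which one reads off the theta quotient) and collecting the class $\beta_{\dvec'}$ with $\dvec'=m\dvec$ yields
\[
F_{g}=\frac{12(-1)^{g-1}}{(2g-2)!}\sum_{\dvec'}\Bigg(\sum_{m\mid\dvec'}m^{2g-3}\sum_{k\in\ZZ}c\!\Big(\tfrac{||\dvec' ||}{m^{2}},k\Big)k^{2g-2}\Bigg)Q^{\beta_{\dvec'}},
\]
where the $\dvec'=\zerovec$ term and the divergent sum $\sum_m m^{2g-3}$ are defined by $\zeta$-function regularization; this regularization is exactly what will make $F_g$ only \emph{meromorphic} and force $g\ge 2$. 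Writing $P_g(a):=\sum_k c(a,k)k^{2g-2}$ for the (regularized) $(2g-2)$-th moment of $c(a,\cdot)$, this reads $F_g=\frac{12(-1)^{g-1}}{(2g-2)!}\sum_{\dvec'}\big(\sum_{m\mid\dvec'}m^{2g-3}P_g(||\dvec' ||/m^2)\big)Q^{\beta_{\dvec'}}$.

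Next I would pass to the genus-two Siegel variables. With $Q_1=e^{2\pi i z}$, $Q_2=e^{2\pi i(\tau-z)}$, $Q_3=e^{2\pi i(\sigma-z)}$ one has $Q^{\beta_{\dvec}}=q^{d_2}\tilde q^{d_3}\zeta^{\,d_1-d_2-d_3}$ (with $q=e^{2\pi i\tau}$, $\tilde q=e^{2\pi i\sigma}$, $\zeta=e^{2\pi i z}$) and, crucially, $||\dvec ||=4d_2d_3-(d_1-d_2-d_3)^2$. Setting $n=d_2$, $\ell=d_1-d_2-d_3$, $m=d_3$, the coefficient of $q^n\zeta^\ell\tilde q^m$ in $F_g$ becomes, up to the overall constant, $\sum_{e\mid\gcd(n,\ell,m)}e^{2g-3}\,P_g\!\big(\tfrac{4nm-\ell^2}{e^2}\big)$. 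This is exactly the shape of the Fourier expansion of the additive (Gritsenko--Maass) lift of weight $2g-2$ of the index-one Jacobi form $\phi_g(\tau,z):=\sum_{n,\ell}P_g(4n-\ell^2)q^n\zeta^\ell$. So the corollary is reduced to two claims: (a) $\phi_g$ is a (weak) Jacobi form of weight $2g-2$ and index $1$, so that its additive lift is a meromorphic Siegel modular form of weight $2g-2$ with poles supported on the Humbert surface $\{z=0\}$ coming from the $q^0$, negative-discriminant part; and (b) up to the overall constant $\phi_g$ is $E_{2g}(\tau)\cdot\phi_{-2,1}(\tau,z)$, equivalently $F_g$ is the Skoruppa--Maass lift of $a_g E_{2g}$ (see Appendix~\ref{Appendix: GW potentials are Siegel modular forms}), where $a_g$ absorbs the constant.

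For (a), the point is that $\sum_a P_g(a)Q^a=\big[(\tfrac{1}{2\pi i}\partial_z)^{2g-2}\,\vartheta_4(2\tau,z)/\vartheta_1(4\tau,z)^2\big]_{z=0}$ (finite part), so $\phi_g$ is a ``development coefficient'' of the meromorphic Jacobi form on the right-hand side of Theorem~\ref{thm: formula for Zban}; by the Eichler--Zagier theory of development coefficients these are again Jacobi forms, although the passage from weight $-\half$, index $-1$, level $\Gamma(4)$ to weight $2g-2$, index $1$, level one is \emph{not} merely additive and must be checked — it relies on the index-correction terms and on the specific form of the theta quotient. For (b), I would exploit that $\vartheta_1(4\tau,z)^{-2}$ is, up to $\eta$-powers and a rescaling of $z$, the Weierstrass $\wp$-function, whose Laurent expansion $\wp(z)=z^{-2}+\sum_{k\ge1}(2k+1)G_{2k+2}(\tau)z^{2k}$ is literally a generating series of Eisenstein series; multiplying by the Taylor expansion of $\vartheta_4(2\tau,z)$ and extracting the $z^{2g-2}$ coefficient should reproduce $a_g E_{2g}\cdot\phi_{-2,1}$. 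The normalising constant $a_g=\tfrac{6|B_{2g}|}{g(2g-2)!}$ is pinned down by the polar class $\beta=C_i$ (where $||\dvec ||=-1$): there $\sum_k c(-1,k)k^{2g-2}$ is a $\zeta$-regularized sum $\propto\zeta(1-2g)=-B_{2g}/(2g)$, which is what produces the Bernoulli number.

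I expect the main obstacle to be step (b): the explicit theta identity identifying the $(2g-2)$-th development coefficient of $\vartheta_4(2\tau,z)/\vartheta_1(4\tau,z)^2$ with a uniform multiple of the full-level Eisenstein series $E_{2g}(\tau)$ for all $g$, together with getting the constant $a_g$ exactly right — the $\Gamma(4)$ structure of the theta quotient has to collapse back to $\mathrm{SL}_2(\ZZ)$ in the end, which is the least automatic point of the computation. A secondary obstacle is making the $\zeta$-regularization of the $\dvec'=\zerovec$ and small-degree contributions rigorous; this is the mechanism behind $F_g$ being only meromorphic and behind the restriction to $g\ge 2$, and conceptually it is the general ``$\log$ of a Borcherds product $=$ generating function of additive lifts'' phenomenon applied to $Z^{\DTss}_\Gamma(X_{\ban})$, which after the Siegel substitution is a power of a Borcherds product of the meromorphic Jacobi form appearing in Theorem~\ref{thm: formula for Zban}.
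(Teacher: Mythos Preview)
Your overall architecture---take $\log$ of the product formula, expand in $\lambda$ via $p=e^{i\lambda}$, reindex to the Siegel variables, and recognise the resulting coefficient structure as a Maass lift of an index-1 Jacobi form---is exactly the route the paper takes. You have also correctly located the crux: identifying the Jacobi form $\phi_g$ (equivalently your moments $P_g$) with a constant multiple of $E_{2g}\cdot\phi_{-2,1}$.

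Where your plan diverges from the paper is in how to carry out that identification, and here your approach is more fragile. You propose to extract $\phi_g$ from development coefficients (in $z$) of the theta quotient $\vartheta_4(2\tau,z)/\vartheta_1(4\tau,z)^2$. But that quotient is the generating function $\sum_{a,k}c(a,k)Q^a y^k$ in the \emph{auxiliary} variables $(Q,y)$ tracking $(a,k)$; its $z$-development coefficients are one-variable $Q$-series $\sum_a P_g(a)Q^a$, and reassembling these into a function of $(q,\zeta)$ via $a=4n-\ell^2$ does not by itself exhibit any Jacobi transformation law. The Jacobi form structure lives in the \emph{other} representation of the same object, namely $\Ell_{q,y}(\CC^2,t)=\theta_1(q,yt)\theta_1(q,yt^{-1})/\theta_1(q,t)\theta_1(q,t^{-1})$ as a function of $(q,y)$ for each $t$. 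The paper works on that side and invokes a result of Zhou giving the closed-form $\lambda$-expansion (with $t=e^{i\lambda}$):
\[
\Ell_{q,y}(\CC^2,e^{i\lambda})=\lambda^{-2}\phi_{-2,1}(q,y)\Big(1+\wp(q,y)\lambda^2+\sum_{g\ge 2}\tfrac{|B_{2g}|}{2g(2g-2)!}E_{2g}(q)\lambda^{2g}\Big),
\]
which immediately yields $\psi_{2g-2}=\tfrac{|B_{2g}|}{2g(2g-2)!}E_{2g}\cdot\phi_{-2,1}$ as a weak Jacobi form of weight $2g-2$, index $1$, full level, and fixes $a_g$ without further work. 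This sidesteps entirely the $\Gamma(4)\to SL_2(\ZZ)$ collapse you flagged as the least automatic point.

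Two smaller points. First, the constant term: the paper does not $\zeta$-regularise. It computes the degree-zero Gromov--Witten invariant $GW^g_{\zerovec}(X_{\ban})$ directly from the MNOP degree-zero formula and matches it to the constant $c_\phi(0)\cdot\frac{-B_{2g-2}}{4g-4}$ already built into the definition of the Maass lift $V_0$. Your $\zeta$-regularisation would reproduce the same number (indeed $\zeta(3-2g)=-B_{2g-2}/(2g-2)$), but the paper's bookkeeping avoids introducing any formal divergent sums. Second, the meromorphy of $F_g$ is not a consequence of regularisation; it comes from $\psi_{2g-2}$ being only a \emph{weak} Jacobi form (it has a nonzero $c(-1)$ term), and the paper appeals to Borcherds and Aoki for the extension of the Maass lift to weak Jacobi forms producing meromorphic Siegel forms.
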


This corollary has a natural interpretation in terms of mirror
symmetry, see Remark~\ref{rem: Fg are genus 2 Siegel forms have mirror
symmetry interpretation}.   

Our Theorem~\ref{thm: formula for Zban} also completely determines the
Gopakumar-Vafa invariants of the banana manifold. One corollary is:

\begin{corollary}\label{cor: GV(Xban) only depend on ||beta||}
The Gopakumar-Vafa invariants of $X_{\ban}$ in the class
$\beta_{\dvec}\, $ only depend on $||\dvec ||$. We thus streamline the
notation by writing
\[
n^{g}_{\beta_{\dvec}}(X_{\ban}) = n^{g}_{a}(X_{\ban})\quad \text{where
$a=||\dvec ||$.}
\]
\end{corollary}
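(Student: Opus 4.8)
The plan is to match the product of Theorem~\ref{thm: formula for Zban} against the Gopakumar-Vafa (BPS) form of the Donaldson-Thomas partition function. Recall that the Gopakumar-Vafa invariants $n^{g}_{\beta}$ of a Calabi-Yau threefold $X$ are determined --- via the Gromov-Witten/Donaldson-Thomas correspondence together with the Gopakumar-Vafa form of the Gromov-Witten potential, or directly as the BPS expansion of the Donaldson-Thomas series --- by the identity
\[
\log Z^{\DTss}_{\Gamma}(X)' \;=\; \sum_{\dvec\neq\zerovec}\;\sum_{m\geq 1}\frac1m\,G_{\dvec}(mu)\,Q^{m\dvec},\qquad G_{\dvec}(u):=\sum_{g\geq 0}n^{g}_{\beta_{\dvec}}(X)\bigl(2\sin(u/2)\bigr)^{2g-2},
\]
where $Z^{\DTss}_{\Gamma}(X)'$ is the reduced partition function (divide out the $\dvec=\zerovec$ factor) and, because of the sign convention adopted in this paper, the Gromov-Witten/Donaldson-Thomas change of variables is simply $p=e^{iu}$.

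First I would put Theorem~\ref{thm: formula for Zban} into this shape. Taking logarithms and expanding $\log(1-p^{k}Q^{\dvec})=-\sum_{m\geq 1}\tfrac1m p^{km}Q^{m\dvec}$ gives
\[
\log Z^{\DTss}_{\Gamma}(X_{\ban})' \;=\; \sum_{\dvec\neq\zerovec}\;\sum_{m\geq 1}\frac1m\,\Lambda_{||\dvec||}(mu)\,Q^{m\dvec},\qquad \Lambda_{a}(u):=12\sum_{k}c(a,k)\,p^{k},
\]
where I have used $p^{m}=e^{imu}$, so that $p^{km}=(e^{imu})^{k}$ and the $m$-th summand is $\tfrac1m\Lambda_{||\dvec||}(mu)$; all of these series are well-defined elements of $\QQ((p))[[Q_{1},Q_{2},Q_{3}]]$. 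Now both displayed expressions for $\log Z^{\DTss}_{\Gamma}(X_{\ban})'$ are the image, under the plethystic operation $P(Q;u)\mapsto\sum_{m\geq 1}\tfrac1m P(Q^{m};mu)$, of $\sum_{\dvec\neq\zerovec}G_{\dvec}(u)Q^{\dvec}$ and of $\sum_{\dvec\neq\zerovec}\Lambda_{||\dvec||}(u)Q^{\dvec}$ respectively. That operation is injective on series without constant term: the $Q^{\dvec}$-coefficient of $\sum_{m}\tfrac1m P(Q^{m};mu)$ equals $P$'s own $Q^{\dvec}$-coefficient plus a combination of coefficients of $P$ at strictly smaller classes, so $P$ is recovered by induction on $d_{1}+d_{2}+d_{3}$ (this is precisely where the multiple-cover corrections of the Gopakumar-Vafa side cancel against the $\tfrac1m p^{km}$ tail of the logarithm). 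Hence $G_{\dvec}(u)=\Lambda_{||\dvec||}(u)$, i.e.
\[
\sum_{g\geq 0}n^{g}_{\beta_{\dvec}}(X_{\ban})\bigl(2\sin(u/2)\bigr)^{2g-2} \;=\; 12\sum_{k}c(||\dvec||,k)\,p^{k}.
\]

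To conclude, the functions $\{(2\sin(u/2))^{2g-2}\}_{g\geq 0}$ are linearly independent in $\QQ((p))$: for $g\geq 1$ they are Laurent polynomials in $p$ of degree $g-1$ with leading coefficient $(-1)^{g-1}$, while $(2\sin(u/2))^{-2}=-\sum_{j\geq 1}jp^{j}$ is the unique one with unbounded positive $p$-degree. Therefore the integers $n^{g}_{\beta_{\dvec}}(X_{\ban})$ are uniquely determined by the right-hand side of the last display, which depends on $\dvec$ only through $||\dvec||$; hence so does each $n^{g}_{\beta_{\dvec}}(X_{\ban})$, which is the assertion, and writing $n^{g}_{a}(X_{\ban})$ for the common value completes the proof.

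The only real content lies in the middle step: verifying that, after taking logarithms, the product of Theorem~\ref{thm: formula for Zban} genuinely has the plethystic shape of the Gopakumar-Vafa/BPS formula, so that the comparison of coefficients is legitimate. This is where the paper's non-standard sign convention earns its keep, since it makes the Gromov-Witten/Donaldson-Thomas substitution exactly $p=e^{iu}$ and thereby aligns the $\tfrac1m$ from $\log(1-p^{k}Q^{\dvec})$ with the degree-$m$ multiple-cover factor $\tfrac1m(2\sin(mu/2))^{2g-2}$ on the nose; with the opposite convention one would have to carry compensating signs $(-1)^{km}$. The injectivity of the plethystic operation and the linear independence of the $(2\sin(u/2))^{2g-2}$ are routine.
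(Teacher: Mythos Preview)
Your argument is correct and follows essentially the same route as the paper. The paper packages the key step as Definition--Theorem~\ref{def-thm: GV invariants in terms of DT partition fnc} in \S\ref{sec: BPS invariants from product DT partition function}: whenever $Z^{\DTss}$ has a product expansion with exponents $a(\beta,k)$, one has $\sum_{g}n^{g}_{\beta}(y^{1/2}+y^{-1/2})^{2g}=(y^{1/2}+y^{-1/2})^{2}\sum_{k}a(\beta,k)(-y)^{k}$, so that $a(\beta_{\dvec},k)=12c(||\dvec||,k)$ immediately gives the corollary; your plethystic-inversion and linear-independence argument is exactly the proof of that Definition--Theorem, written out by hand.
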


The property that $n^{g}_{\beta_{\dvec}}(X_{\ban})$ only depends on
$||\dvec ||$ and so in particular the invariants are independent of
the divisibility of $\beta_{\dvec}$, is an unusual property of
$X_{\ban}$ which is also shared by the local $K3$ surface by a deep
result of Pandharipande-Thomas \cite{Pandharipande-Thomas-KKV}.

We can reformulate our main result in terms of the Gopakumar-Vafa
invariants. After some manipulation of generating functions (see
\S~\ref{sec: elliptic genera of Hilb(C2)} and \S~\ref{sec: BPS
invariants from product DT partition function}) we can deduce:

\begin{theorem}\label{thm: generating function for MT polys of Xban}
The Gopakumar-Vafa invariants $n^{g}_{\beta_{\dvec}}(X_{\ban}) =
n^{g}_{a}(X_{\ban})$ in the classes $\beta_{\dvec} \, $ with $||\dvec
||=
a$ are given by
\[
\sum_{a=-1}^{\infty} \sum_{g\geq 0} n_{a}^{g}(X_{\ban})
\left(y^{\half}+y^{-\half} \right)^{2g} Q^{a+1} =
12\prod_{n=1}^{\infty}
\frac{(1+yQ^{2n-1})(1+y^{-1}Q^{2n-1})(1-Q^{2n})}
{(1+yQ^{4n})^{2}(1+y^{-1}Q^{4n})^{2}(1-Q^{4n})^{2}}.
\]
\end{theorem}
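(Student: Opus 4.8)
The plan is to run the standard procedure that turns a product‑form Donaldson--Thomas partition function into Gopakumar--Vafa invariants — the content of \S~\ref{sec: BPS invariants from product DT partition function} — starting from Theorem~\ref{thm: formula for Zban}, then use Corollary~\ref{cor: GV(Xban) only depend on ||beta||} to collapse everything to the single variable $a=||\dvec||$, assemble the result into a generating series in $Q$ and $y$, and finally recognize that series, via the Jacobi triple product, as the infinite product on the right.

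\textbf{Reading off the BPS invariants.} Let $Z'$ denote $Z^{\DTss}_{\Gamma}(X_{\ban})$ divided by its degree‑zero part. The Gopakumar--Vafa invariants $n^{g}_{\beta}$ are, by definition, the exponents in the BPS product form
\[ Z'\;=\;\prod_{\beta>0}\prod_{j\in\ZZ}\bigl(1-p^{j}Q^{\beta}\bigr)^{-c_{\beta,j}},\qquad \sum_{j\in\ZZ}c_{\beta,j}\,p^{j}\;=\;\sum_{g\geq 0}(-1)^{g-1}n^{g}_{\beta}\,(p^{1/2}-p^{-1/2})^{2g-2}, \]
where the genus‑zero term is expanded as the symmetric Laurent series matching the support of the $c_{\beta,j}$ (the usual Aspinwall--Morrison prescription; the non‑standard minus sign built into $Z^{\DTss}$ is exactly what makes this presentation sign‑free). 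Taking logarithms puts both this and Theorem~\ref{thm: formula for Zban} into the plethystic shape $\sum_{\beta>0}\sum_{m\geq1}\tfrac1m\,\phi_{\beta}(p^{m})\,Q^{m\beta}$: from the BPS form one reads $\phi_{\beta}(p)=\sum_{j}c_{\beta,j}p^{j}$, while expanding $\log(1-p^{k}Q^{\dvec})$ in Theorem~\ref{thm: formula for Zban} gives $\phi_{\beta_{\dvec}}(p)=12\sum_{k}c(||\dvec||,k)\,p^{k}$. Since $\{\phi_{\beta}\}\mapsto\sum_{\beta,m}\tfrac1m\phi_{\beta}(p^{m})Q^{m\beta}$ is invertible (M\"obius inversion in $m$, inducting on the divisibility of the class), these two presentations force, for every $\dvec$,
\[ \sum_{g\geq 0}(-1)^{g-1}n^{g}_{\beta_{\dvec}}\,(p^{1/2}-p^{-1/2})^{2g-2}\;=\;12\sum_{k}c(||\dvec||,k)\,p^{k}, \]
and by Corollary~\ref{cor: GV(Xban) only depend on ||beta||} both sides depend only on $a=||\dvec||$; write $n^{g}_{a}$ for the common value.

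\textbf{Repackaging and simplifying.} Multiply the last identity by $-(p^{1/2}-p^{-1/2})^{2}$ and rename the formal variable $p\mapsto -y$; since $(p^{1/2}-p^{-1/2})^{2}$ becomes $-(y^{1/2}+y^{-1/2})^{2}$, the left side collapses to $\sum_{g}n^{g}_{a}(y^{1/2}+y^{-1/2})^{2g}$ and the right side becomes $12(y^{1/2}+y^{-1/2})^{2}\sum_{k}c(a,k)(-y)^{k}$. Multiplying by $Q^{a+1}$ and summing over $a\geq -1$ — the lower bound is forced since $\vartheta_{4}(2\tau,z)$ has $Q$‑order $0$ and $\vartheta_{1}(4\tau,z)^{2}$ has $Q$‑order $1$, so $c(a,k)=0$ for $a\leq -2$, the $a=-1$ term recording the twelve rigid $\PP^{1}$'s — and invoking the generating function of the $c(a,k)$ from Theorem~\ref{thm: formula for Zban}, we get
\[ \sum_{a\geq -1}\sum_{g\geq 0}n^{g}_{a}\,(y^{1/2}+y^{-1/2})^{2g}\,Q^{a+1}\;=\;12\,(y^{1/2}+y^{-1/2})^{2}\,Q\cdot\frac{\vartheta_{4}(2\tau,z)}{\vartheta_{1}(4\tau,z)^{2}}\;\bigg|_{\,y\mapsto -y}. \]
By the Jacobi triple product $\vartheta_{4}(2\tau,z)=\prod_{n\geq1}(1-Q^{2n})(1-yQ^{2n-1})(1-y^{-1}Q^{2n-1})$ and $\vartheta_{1}(4\tau,z)^{2}=-Q\,(y^{1/2}-y^{-1/2})^{2}\prod_{n\geq1}(1-Q^{4n})^{2}(1-yQ^{4n})^{2}(1-y^{-1}Q^{4n})^{2}$; substituting $y\mapsto -y$ turns each $(1-yQ^{\bullet})(1-y^{-1}Q^{\bullet})$ into $(1+yQ^{\bullet})(1+y^{-1}Q^{\bullet})$ and $(y^{1/2}-y^{-1/2})^{2}$ into $-(y^{1/2}+y^{-1/2})^{2}$, so the prefactor $12\,Q\,(y^{1/2}+y^{-1/2})^{2}$ cancels the leading factors of the quotient exactly, leaving $12\prod_{n\geq1}\frac{(1+yQ^{2n-1})(1+y^{-1}Q^{2n-1})(1-Q^{2n})}{(1+yQ^{4n})^{2}(1+y^{-1}Q^{4n})^{2}(1-Q^{4n})^{2}}$, the claimed formula.

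The structural content — the plethystic uniqueness of the second paragraph together with Corollary~\ref{cor: GV(Xban) only depend on ||beta||} — is immediate; the real work is entirely bookkeeping, and I expect the main obstacle to lie there: fixing the precise BPS expansion and its sign conventions (the $(-1)^{g-1}$, the effect of the non‑standard minus sign in $Z^{\DTss}$, the genus‑zero expansion prescription), and then tracking signs and half‑integer shifts through the theta‑function products. This is exactly the ``manipulation of generating functions'' flagged before the statement, and it is also the point where the relation to the equivariant elliptic genus of $\CC^{2}$ (\S~\ref{sec: elliptic genera of Hilb(C2)}) makes the identity transparent.
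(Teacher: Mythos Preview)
Your proof is correct and follows essentially the same route as the paper: apply the Definition--Theorem of \S\ref{sec: BPS invariants from product DT partition function} to the exponents $a(\beta_{\dvec},k)=12\,c(\|\dvec\|,k)$ read off from Theorem~\ref{thm: formula for Zban}, then sum against $Q^{a+1}$ and invoke the product form of $\sum_{a,k}c(a,k)Q^{a+1}t^{k}$ at $t=-y$. The only packaging differences are that (i) the paper has already recorded the Jacobi triple product step as Corollary~\ref{cor: sum c(a,k)Q^a t^k = product}, so one can cite it rather than redo it, and (ii) your plethystic-log/M\"obius-inversion argument, while correct, is unnecessary: the exponents in a product $\prod_{\beta,k}(1-p^{k}Q^{\beta})^{-a(\beta,k)}$ are uniquely determined, so $a(\beta_{\dvec},k)=12\,c(\|\dvec\|,k)$ can be read off directly from Theorem~\ref{thm: formula for Zban} and plugged into the Definition--Theorem without any inversion.
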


It is interesting to compare the above formula to the analogous result for
the local $K3$ surface, namely the Katz-Klemm-Vafa formula:

\begin{theorem}[Pandharipande-Thomas\cite{Pandharipande-Thomas-KKV}]\label{thm: KKV formula}

The Gopakumar-Vafa invariants $n^{g}_{\beta}(K3) =
n^{g}_{a}(K3)$ in the class $\beta$ with $\beta^{2}/2 = a$ are given by
\[
\sum_{a=-1}^{\infty} \sum_{g\geq 0} n_{a}^{g}(K3)
\left(y^{\half}+y^{-\half} \right)^{2g} Q^{a+1} = \prod_{n=1}^{\infty}
\frac{1}{(1+yQ^{n})^{2}(1+y^{-1}Q^{n})^{2}(1-Q^{n})^{20}}.
\]
\end{theorem}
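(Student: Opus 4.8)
The plan is to deduce this from the known behavior of the Donaldson--Thomas (equivalently, reduced Gromov--Witten / stable pairs) theory of a $K3$ surface in fiber classes of a $K3$-fibered Calabi--Yau threefold. Concretely, I would start from the partition function of stable pairs on a $K3$ surface $S$ in the full primitive curve class tower, which by the Maulik--Pandharipande--Thomas / Pandharipande--Thomas analysis (the cited \cite{Pandharipande-Thomas-KKV}) is computed via a degeneration argument reducing everything to the elliptic $K3$ and the relative geometry of a rational elliptic surface. The first step is therefore to set up the relative/degeneration framework: choose an elliptically fibered $K3$ surface $S\to\PP^1$ with a section, degenerate it (or rather its local threefold $S\times\CC^2$-type geometry) into pieces glued along an elliptic curve, and express the reduced invariants in terms of the rubber/relative contributions.

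Next I would invoke the GW/stable-pairs correspondence for $K3$ classes together with the evaluation of the relative stable pairs theory of $(\text{rational elliptic surface}, E)$; this is where the Göttsche--Yau--Zaslow formula for the Euler characteristics of Hilbert schemes of points on $K3$ enters, producing the $(1-Q^n)^{-24}$-type factor, of which $20$ survive after removing the two ``trivial'' directions corresponding to the section and fiber of the elliptic fibration (equivalently, after passing from the full lattice to the rank-one sublattice generated by a primitive class). The $y$-dependence, i.e. the refinement recording the genus/$\chi$ grading, comes from the fact that the local stable pairs vertex/edge contributions of a $(-2)$-curve in a $K3$ and its self-intersection-$2a$ deformations assemble into the $(1+yQ^n)^{-2}(1+y^{-1}Q^n)^{-2}$ factors; tracking these carefully, using that the invariants depend on $\beta$ only through $\beta^2$ (itself a consequence of the deformation invariance of reduced invariants over the moduli of $K3$ surfaces with a curve class of fixed square), yields the stated product.

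Finally I would translate the resulting partition function into the Gopakumar--Vafa normalization: the change of variables $p\mapsto -y$ and the BPS rewriting $\sum_n \DT_{\beta,n}(-p)^n \rightsquigarrow \sum_g n^g_\beta (y^{1/2}+y^{-1/2})^{2g}$ converts the infinite product in $p$ into the claimed infinite product in $y$ and $Q$, with the $Q^{a+1}$ shift absorbing the standard $n=1$ index shift ($\beta^2/2=a=-1$ for the smallest class, a rational curve contributing $n^0_{-1}=1$ per node, matched against $24$ nodal fibers, hence the leading $\prod(1-Q^n)^{-24}$ truncating to the stated form). The main obstacle is the second step: rigorously establishing the relative/degeneration evaluation with its full $y$-refinement — i.e. controlling the stable pairs theory of the rational elliptic surface relative to a fiber and its gluing behavior — is the substantive content, since the bare Euler-characteristic statement (Yau--Zaslow) is classical but the refined, all-genus version requires the deformation-invariance of reduced invariants and a careful rubber calculus; everything else (the GW/DT dictionary, the change of variables, bookkeeping of lattice reduction) is formal. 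I would therefore simply cite \cite{Pandharipande-Thomas-KKV} for this step rather than reprove it, as the theorem is attributed to Pandharipande--Thomas in the excerpt.
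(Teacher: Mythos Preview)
The paper does not prove this theorem at all: it is stated as a known result of Pandharipande--Thomas \cite{Pandharipande-Thomas-KKV}, quoted only so that the reader can compare it with the analogous formula for $X_{\ban}$ (Theorem~\ref{thm: generating function for MT polys of Xban}). There is no proof, sketch, or argument for it anywhere in the text. Your final conclusion --- simply to cite \cite{Pandharipande-Thomas-KKV} --- is therefore exactly what the paper does, and is the correct disposition of this statement.

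The preceding outline you give is unnecessary here and also somewhat imprecise (for instance, the heuristic that ``$20$ survive after removing the two `trivial' directions'' from a $(1-Q^n)^{-24}$ is not how the exponent $20$ actually arises in the stable pairs computation; the $20$ reflects the Hodge numbers entering the $\chi_y$-genus of $\Hilb^n(K3)$, not a lattice reduction, and the paper itself notes immediately after the theorem that the right hand side equals $\sum_n \chi_y(\Hilb^n(K3))q^n$ after $Q=-yq$). If you want to gesture at the content of the cited proof, the cleanest one-line summary is: deformation invariance of reduced invariants reduces to primitive classes, and the primitive case is evaluated via the stable pairs theory and the Kawai--Yoshioka formula; but none of this belongs in the present paper.
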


Note that the right hand side of the above equation, after the
substitution $Q=-yq$, is equal to $\sum_{n}\chi_{y}(\Hilb^{n}
(K3))q^{n}$, the generating function for the $\chi_{y}$-genus of the
Hilbert schemes of points on $K3$. It would be interesting to find an
analogous interpretation of the right hand side of the equation in
Theorem~\ref{thm: generating function for MT polys of Xban}.

See \S~\ref{Appendix: GW potentials are Siegel modular forms} for a
table of $n_{a}^{g}(X_{\ban})$ for small values of $a$ and $g$.

\section{The coefficients $c(a,k)$ and the elliptic genera
of $\Hilb(\CC^{2})$. }\label{sec: elliptic genera of Hilb(C2)}

Let $M$ be a compact complex manifold of dimension $d$ and let $x_{1},\dotsc
,x_{d}$ be the Chern roots of $TM$. Then the elliptic genus is defined
by
\[
\Ell_{q,y} (M) = \int_{M}\,\, \prod_{j=1}^{d} x_{j} y^{-\half }\,\,
\prod_{n=1}^{\infty} \frac{\left(1-ye^{-x_{j}}q^{n-1}
\right)\left(1-y^{-1}e^{x_{j}}q^{n} \right)}{\left( 1-e^{-x_{j}}q^{n-1} \right)\left( 1-e^{x_{j}}q^{n} \right)} .
\]

If $M$ has a $\CC^{*}$ action with isolated fixed points, then by
Atiyah-Bott localization we get
\begin{equation}\label{eqn: formula for Ell via Atiyah-Bott localization}
\Ell_{q,y}(M) = \sum_{p\in M^{\CC^{*}}} \,\,\prod_{j=1}^{d} y^{-\half}
\prod_{n=1}^{\infty} \frac{\left(1-y t^{-k_{j}(p)}q^{n-1} \right)\left(1-y^{-1} t^{k_{j}(p)}q^{n} \right)}{\left(1- t^{-k_{j}(p)}q^{n-1} \right)\left(1- t^{k_{j}(p)}q^{n} \right)}
\end{equation}
where $k_{1}(p),\dotsc ,k_{d}(p)\in \ZZ$ are the weights of the
$\CC^{*}$ action on $T_{p}M$.

If $M$ is non-compact, we take equation \eqref{eqn: formula for Ell
via Atiyah-Bott localization} to be the definition of the elliptic
genus $\Ell_{q,y}(M,t)$ which then may depend on $t$, the equivariant
parameter\footnote{The parameter $t\in H^{*}_{\CC^{*}}(pt)$ is the
Chern character of the universal line bundle so $t=e^{c_{1}}$ where
$c_{1}\in H^{2}_{\CC^{*}}(pt)$ is an integral generator. }.

It is convenient to rewrite this expression in terms of the Fourier
expansion of the theta
function $\theta_{1}$. Let $q=\exp\left(2\pi i \tau \right)$ and
$y=\exp\left(2\pi i z \right)$, then $\theta_{1}(q,y)$ is given by
\begin{align*}
\theta_{1}(q,y) = &-\sum_{k\in \ZZ +\half} q^{\frac{k^{2}}{2}} (-y)^{k}\\
=& -iq^{\frac{1}{8}}y^{-\frac{1}{2}} \prod_{n=1}^{\infty}
(1-q^{n})(1-yq^{n-1})(1-y^{-1}q^{n}) .
\end{align*}
Then equation~\eqref{eqn: formula for Ell via Atiyah-Bott
localization} becomes
\[
\Ell_{q,y}(M,t) = \sum_{p\in M^{\CC^{*}}} \prod_{j=1}^{d} \frac{\theta_{1}(q,yt^{-k_{j}(p)})}{\theta_{1}(q,t^{-k_{j}(p)})}
\]

For example, if we let $\CC^{*}$ act on $\CC^{2}$ with weights $\pm
1$, then the induced action of $\CC^{*}$ on $\Hilb^{m}(\CC^{2})$ has
isolated fixed points corresponding to monomial ideals which are in
bijective correspondence with integer partitions of $m$. The tangent
weights associated to a partition $R$ are given by $\{\pm h_{j,k}\}$
where $h_{j,k}=h_{j,k}(R)$ is the hook length of the box in position
$(j,k)$ in the diagram of $R$ (see \S~\ref{subsec: Notation and schur
function identities} for this notation). Thus

\begin{equation}\label{eqn: Ell(Hill(C2)) as a sum over partitions}
\sum_{m=0}^{\infty} \Ell_{q,y}(\Hilb^{m}(\CC^{2}),t)Q^{m} = \sum_{R}
Q^{|R|}  \prod_{(j,k)\in R}
 \frac{\theta_{1}(q,yt^{-h_{j,k}})\theta_{1}(q,yt^{h_{j,k}})} {\theta_{1}(q,t^{-h_{j,k}})\theta_{1}(q,t^{h_{j,k}})}.
\end{equation}

Dijkgraaf-Moore-Verlinde-Verlinde conjectured \cite{DMVV} that 
\[
\sum_{m=0}^{\infty} \Ell_{q,y}(\Hilb^{m}(\CC^{2}),t)\,  Q^{m} =
\sum_{m=0}^{\infty} \Ell^{orb}_{q,y}(\Sym^{m}(\CC^{2}),t)\, Q^{m}
\]
where $ \Ell^{orb}_{q,y}(\Sym^{m}(\CC^{2}),t)$ is the orbifold
elliptic genera of $\Sym^{m}(\CC^{2})$. This is a special case of the
crepant resolution conjecture for equivariant elliptic genera which
was proven by Waelder \cite[Thm~12]{Waelder} based on the
non-equivariant case proven by Borisov-Libgobner
\cite{Borisov-Libgober}. The right hand side of the DMVV conjecture is
easy to compute; consequently, Waelder's result leads to the following
formula:

\begin{theorem}[Waelder]\label{thm: DMVV formula}
\[
\sum_{m=0}^{\infty} \Ell_{q,y}(\Hilb^{m}(\CC^{2}),t)Q^{m} =
\prod_{m=1}^{\infty }\prod_{n=0}^{\infty}\prod_{l,k\in \ZZ} \left(1 -
t^{k}q^{n}y^{l}Q^{m} \right)^{-c(nm,l,k)}
\]
where the coefficients $c(n,l,k)$ are defined by
\begin{align*}
\Ell_{q,y} (\CC^{2},t) &= \frac{\theta_{1}(q,yt)\theta_{1}(q,yt^{-1})}{\theta_{1}(q,t)\theta_{1}(q,t^{-1})} \\
&= \sum_{n=0}^{\infty } \sum_{k,l\in \ZZ} c(n,l,k)q^{n}y^{l}t^{k}.
\end{align*}
\end{theorem}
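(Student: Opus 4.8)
The plan is to derive this from Waelder's equivariant crepant resolution theorem together with the explicit evaluation of the orbifold elliptic genus of a symmetric product. By \cite[Thm~12]{Waelder} the crepant resolution conjecture holds for equivariant elliptic genera; specializing to the Hilbert--Chow morphism $\Hilb^{m}(\CC^{2})\to\Sym^{m}(\CC^{2})$ (a crepant resolution of the coarse space of $[(\CC^{2})^{m}/S_{m}]$) and summing over $m$ gives the equivariant DMVV identity
\[
\sum_{m\geq 0}\Ell_{q,y}(\Hilb^{m}(\CC^{2}),t)\,Q^{m}=\sum_{m\geq 0}\Ell^{orb}_{q,y}(\Sym^{m}(\CC^{2}),t)\,Q^{m},
\]
so it remains to show the right-hand side equals the stated infinite product.

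For this I would invoke the ``second quantized'' formula for the orbifold elliptic genus of a symmetric product. The orbifold elliptic genus of $[(\CC^{2})^{m}/S_{m}]$ is a sum over twisted sectors, indexed by conjugacy classes of $S_{m}$, i.e. by partitions $\mu\vdash m$: a part of size $j$ contributes a sector which, after dividing by the $\ZZ/j$ permuting the corresponding factors, is a copy of $\CC^{2}$, and its contribution is obtained from $\Ell_{q,y}(\CC^{2},t)$ by the substitution $q\mapsto q^{j}$ on the modular parameter together with the usual age shift of the $y$-grading. Since $\CC^{2}$ (with the weight $\pm1$ action) and all of these sectors have a single $\CC^{*}$-fixed point, each contribution is computed by Atiyah--Bott localization exactly as on a compact manifold, and the sum over all $m$ and all $\mu$ factorizes over part sizes. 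Carrying out the resulting elementary resummation — the exponential formula for symmetric products, where a part of size $m$ rescales $q\mapsto q^{m}$ and hence produces the index pattern $c(nm,l,k)$ — yields
\[
\sum_{m\geq 0}\Ell^{orb}_{q,y}(\Sym^{m}(\CC^{2}),t)\,Q^{m}=\prod_{m\geq 1}\prod_{n\geq 0}\prod_{l,k\in\ZZ}\left(1-t^{k}q^{n}y^{l}Q^{m}\right)^{-c(nm,l,k)},
\]
with $c(n,l,k)$ the Fourier coefficients of $\Ell_{q,y}(\CC^{2},t)$. Combined with the displayed equality this is the assertion of the theorem.

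The substantive input is Waelder's theorem, whose proof (building on the non-equivariant case of Borisov--Libgober) I would simply cite; within the argument sketched above the only real point of care is bookkeeping — matching the $\theta_{1}$-normalization of the equivariant elliptic genus (with its $q^{1/8}$ and $y^{-1/2}$ factors cancelling between numerator and denominator) with Waelder's conventions, and checking that the twisted-sector shifts produce exactly the $c(nm,l,k)$ appearing in the statement rather than some reindexed variant. There is no hard step once Waelder's theorem is granted: the entire remaining content is the finite combinatorial evaluation of the $\Sym^{m}(\CC^{2})$ orbifold elliptic genus, which is the computation referred to in the text as ``easy''.
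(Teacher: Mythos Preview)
Your proposal is correct and follows precisely the route the paper sketches: cite Waelder's equivariant crepant resolution theorem to equate the Hilbert scheme and orbifold symmetric product elliptic genera, then evaluate the orbifold side by the standard twisted-sector/exponential-formula computation to obtain the product. The paper does not prove this theorem in detail---it attributes the result to Waelder and simply remarks that ``the right hand side of the DMVV conjecture is easy to compute''---so your expansion of that easy computation (parts of size $j$ rescaling $q\mapsto q^{j}$, producing the $c(nm,l,k)$ pattern) is exactly what the paper is pointing to.
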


We prove the following result about the coefficients $c(n,l,k)$.
\begin{proposition}\label{prop: generating function for c(a,k) =
c(4n-l^2,k) =c(n,l,k)}
Let 
\[
\Ell_{q,y}(\CC^{2},t)
=\sum_{n=0}^{\infty}\sum_{l,k\in \ZZ} c(n,l,k)q^{n} y^{l}t^{k} .
\]
Then $c(n,l,k)$ only depends on the pair $(4n-l^{2},k)$. Writing
\[
c(n,l,k) = c(4n-l^{2},k)
\]
we have $c(a,k)=0$ if $a<-1$ and 
\[
\sum_{a=-1}^{\infty } \sum_{ k\in \ZZ} c(a,k)Q^{a}t^{k} = \frac{\sum_{k\in
\ZZ}Q^{k^{2}}(-t)^{k}}{\left(\sum_{k\in \ZZ +\half } Q^{2k^{2}}
(-t)^{k} \right)^{2}} =\frac{\theta_{4}(Q^{2},t)}{\theta_{1}(Q^{4},t)^{2}}.
\]
\end{proposition}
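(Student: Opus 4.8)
The plan is to start from the explicit product/theta expression
\[
\Ell_{q,y}(\CC^{2},t) = \frac{\theta_{1}(q,yt)\,\theta_{1}(q,yt^{-1})}{\theta_{1}(q,t)\,\theta_{1}(q,t^{-1})},
\]
and first establish the claimed dependence of $c(n,l,k)$ on $l$ only through $4n-l^{2}$. The natural mechanism is the elliptic transformation law of $\theta_{1}$ in its second argument: shifting $z\mapsto z+\tau$ (i.e.\ $y\mapsto yq$) multiplies $\theta_{1}(q,y)$ by a factor of the form $-q^{-1/2}y^{-1}$. Applying this simultaneously in numerator and denominator, one checks that $\Ell_{q,y}(\CC^{2},t)$ is invariant under $(q,y)\mapsto$ the combined substitution that sends $y\mapsto yq$ and compensates with a power of $q$; concretely the cleanest route is to note that $\Ell_{q,y}(\CC^{2},t)$, as a function of $\tau$ and $z$ with $y=e^{2\pi i z}$, is a Jacobi form of weight $0$ and index $-1$ (this is well known for the $\CC^{2}$ elliptic genus and is essentially the index-$1$ Jacobi form $\phi_{0,1}$ up to normalization), so its Fourier coefficients $c(n,l,k)$ satisfy the standard index-$(-1)$ constraint $c(n,l,k)=c(n',l',k)$ whenever $4n-l^{2}=4n'-l'^{2}$. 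I would spell this out directly from the transformation law rather than invoke Jacobi form theory as a black box, since the argument is short and self-contained once one has the functional equation for $\theta_{1}$. This justifies writing $c(n,l,k)=c(4n-l^{2},k)$, and the inequality $a\geq -1$ for nonvanishing coefficients follows from the leading-order behaviour: the numerator contributes a factor with lowest $q$-power matching, so $4n-l^{2}\geq -1$ on the support.

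Next, to extract the generating function $\sum_{a}\sum_{k}c(a,k)Q^{a}t^{k}$, the key observation is that it suffices to compute $\Ell_{q,y}(\CC^{2},t)$ along a convenient slice of the $(q,y)$-plane that is "generic" for the quadratic form $4n-l^{2}$ — for instance, I would substitute $y=1$ is degenerate, so instead set $q\mapsto Q^{2}$, $y\mapsto -Q^{-1}$ (or the symmetric choice $q=Q^4, y=-Q^{-2}$), chosen precisely so that $q^{n}y^{l}$ collapses to $Q^{2n-l^2}\cdot(\pm 1)$-type monomials and hence packages the double sum $\sum_{n,l}c(n,l,k)q^{n}y^{l}$ into $\sum_{a}c(a,k)Q^{a}$ up to a bookkeeping sign. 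Under such a specialization the four theta factors become, via the product formula for $\theta_{1}$, infinite products in $Q$ and $t$; the $\theta$-product for $\theta_{1}(q,yt)$ and $\theta_{1}(q,yt^{-1})$ in the numerator and $\theta_{1}(q,t)\theta_{1}(q,t^{-1})$ in the denominator should, after the substitution and cancellation of the prefactors $-iq^{1/8}y^{-1/2}$, reorganize exactly into $\theta_{4}(Q^{2},t)/\theta_{1}(Q^{4},t)^{2}$. Equivalently, I would verify the identity
\[
\frac{\sum_{k\in\ZZ}Q^{k^{2}}(-t)^{k}}{\big(\sum_{k\in\ZZ+\frac12}Q^{2k^{2}}(-t)^{k}\big)^{2}} = \frac{\theta_{4}(Q^{2},t)}{\theta_{1}(Q^{4},t)^{2}}
\]
directly from the series/product definitions of $\theta_{1},\theta_{4}$ (the numerator is $\theta_{4}$ at modular parameter $Q^{2}$ by the standard $\theta_{4}(q,y)=\sum q^{k^2/2}(-y)^k$ convention, and the denominator is $-\theta_{1}(Q^{4},t)^{2}$ up to a monomial), which is a routine theta-function manipulation.

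The main obstacle I anticipate is purely bookkeeping: getting all the half-integer $q$-powers, the $y^{-1/2}$ prefactors, and the signs $(-1)^{k}$ versus $(-t)^{k}$ to match up consistently between the two sides, and making sure the specialization $(q,y)\mapsto(Q^{2},-Q^{-1})$ really does recover the full information of $c(a,k)$ rather than some collision of coefficients — this is exactly where the index-$(-1)$ property proved in the first step is needed, since it guarantees that knowing the one-variable series $\sum_a c(a,k)Q^a t^k$ determines all $c(n,l,k)$, so no information is lost in the specialization. A secondary point to handle carefully is convergence/formal-power-series bookkeeping near $a=-1$: the $a=-1$ term corresponds to the empty-partition / leading contribution and one must check it is picked up correctly (it should give $c(-1,k)$ supported at $k=0$ with value $1$, consistent with $\theta_{4}(Q^2,t)/\theta_1(Q^4,t)^2$ having a simple pole at $t=1$ of the right residue). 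Once these normalizations are pinned down, the proposition follows by comparing the resulting $q$- and $t$-expansions term by term, or more cleanly by recognizing both sides as the same meromorphic Jacobi form of weight $-\tfrac12$ and index $-1$ and matching finitely many coefficients.
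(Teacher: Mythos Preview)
Your first step---deducing the $4n-l^{2}$ dependence from the elliptic transformation law of $\theta_{1}$---is a valid alternative to what the paper does (though the index is $+1$, not $-1$: the shift $y\mapsto yq$ gives $\Ell_{q,yq}=q^{-1}y^{-2}\Ell_{q,y}$). The paper does not separate the two halves of the proposition. Instead it expands the two numerator thetas as the double sum
\[
\sum_{n,m\in\ZZ}q^{\frac12(n+\frac12)^{2}+\frac12(m+\frac12)^{2}}(-yt)^{n+\frac12}(-yt^{-1})^{m+\frac12},
\]
leaves $\theta_{1}(q,t)^{-2}$ as a power series $q^{-1/4}\frac{t}{(1-t)^{2}}\sum_{i}\delta_{i}(t)q^{i}$, and performs the linear change of summation variables $l=n+m+1$, $b=n-m$. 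Since $\tfrac12(n+\tfrac12)^{2}+\tfrac12(m+\tfrac12)^{2}=\tfrac14(l^{2}+b^{2})$, the $q$-exponent becomes $i+\tfrac14(l^{2}+b^{2}-1)$, so that $4n-l^{2}=4i+b^{2}-1$; this simultaneously shows the dependence on $4n-l^{2}$ only, the bound $a\geq -1$, and produces the generating function $\sum_{a,k}c(a,k)Q^{a}t^{k}$ directly by summing over $i$ and $b$.

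Your second step, by contrast, has a genuine gap. A monomial specialization $q\mapsto Q^{\alpha}$, $y\mapsto \pm Q^{\beta}$ sends $q^{n}y^{l}$ to $(\pm1)^{l}Q^{\alpha n+\beta l}$, which is \emph{linear} in $(n,l)$; no such substitution can collapse $q^{n}y^{l}$ to $Q^{4n-l^{2}}$ or any quadratic exponent. So there is no mechanism by which the four theta factors ``reorganize into $\theta_{4}(Q^{2},t)/\theta_{1}(Q^{4},t)^{2}$'' after your proposed substitution, and the Jacobi property you proved in step one does not rescue this: it tells you that $\sum_{a}c(a,k)Q^{a}$ \emph{determines} all $c(n,l,k)$, but not how to \emph{compute} it from $\Ell_{q,y}$. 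Extracting that one-variable series from a Jacobi form requires a theta decomposition, not an evaluation. (Your sanity check is also off: the $a=-1$ contribution is $\sum_{k}c(-1,k)t^{k}=-t/(1-t)^{2}$, not a single term at $k=0$.) The paper's change of summation variables avoids this difficulty entirely, which is why it is the cleaner route here.
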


Examining the first two terms in the $Q$ expansion we get the following
easy corollary:

\begin{corollary}\label{cor: c(-1,k) and c(0,k)}
The coefficients $c(a,k)$ for $a=-1,0$ are given by 
\[
c(-1,k) =
\begin{cases}
0,& -k\leq 0\\
-k,&k>0
\end{cases}
\quad \quad
c(0,k) =
\begin{cases}
0,& k< 0\\
1,&k=0\\
2k,&k>0
\end{cases}
\]
\end{corollary}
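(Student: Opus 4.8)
The plan is to prove Corollary~\ref{cor: c(-1,k) and c(0,k)} by extracting the $Q^{-1}$ and $Q^{0}$ coefficients (as Laurent/power series in $t$) from the theta-quotient identity supplied by Proposition~\ref{prop: generating function for c(a,k) = c(4n-l^2,k) =c(n,l,k)}, namely
\[
\sum_{a=-1}^{\infty}\sum_{k\in\ZZ} c(a,k)\,Q^{a}t^{k}
= \frac{\displaystyle\sum_{k\in\ZZ}Q^{k^{2}}(-t)^{k}}
       {\displaystyle\Bigl(\sum_{k\in\ZZ+\half}Q^{2k^{2}}(-t)^{k}\Bigr)^{2}}.
\]
First I would expand the numerator as $1 - (t+t^{-1})Q + O(Q^{4})$ and the denominator base as $\bigl(-(t^{1/2}+t^{-1/2})Q^{1/2} + O(Q^{9/2})\bigr)$, so the squared denominator is $(t^{1/2}+t^{-1/2})^{2}Q\,\bigl(1 + O(Q^{4})\bigr) = (t + 2 + t^{-1})\,Q\,(1+O(Q^{4}))$. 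Dividing, the leading behaviour is $Q^{-1}(t+2+t^{-1})^{-1}\bigl(1-(t+t^{-1})Q\bigr)\bigl(1+O(Q^{4})\bigr)$, and since $(t+2+t^{-1}) = (t^{1/2}+t^{-1/2})^{2}$, its reciprocal is $\sum_{m\geq 0}(-1)^{m}(m+1)\,t^{-m}\cdot$(an overall power of $t$), which is exactly a geometric-type series. Reading off the $Q^{-1}$ coefficient gives $\sum_{k}c(-1,k)t^{k} = (t^{1/2}+t^{-1/2})^{-2}$, and I would then identify this with $\sum_{k>0}(-k)t^{k}$ — equivalently, check that $(t+2+t^{-1})\cdot\bigl(-\sum_{k>0}k\,t^{k}\bigr) = 1$ by a short telescoping of the coefficient of $t^{j}$, which collapses to $1$ at $j=0$ and to $0$ otherwise.

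For the $Q^{0}$ term I would collect the order-$Q^{0}$ contribution to the product $(t+2+t^{-1})^{-1}\cdot\bigl(1-(t+t^{-1})Q + O(Q^{4})\bigr)$, i.e. the coefficient of $Q^{0}$ is $-(t+t^{-1})\cdot(t+2+t^{-1})^{-1} = -(t+t^{-1})\sum_{k}c(-1,k)(-t^{k})$ up to a shift; concretely $\sum_{k}c(0,k)t^{k} = (t+t^{-1})\sum_{k>0}k\,t^{k}\cdot(-1)\cdot(-1)$, and expanding $(t+t^{-1})\sum_{k>0}k t^{k}$ and regrouping telescopes to $1 + \sum_{k>0}2k\,t^{k}$: indeed the coefficient of $t^{j}$ for $j\geq 2$ is $(j-1)+(j+1)=2j$, for $j=1$ it is $(0)+(2)=2$, and for $j=0$ it is $1$, with nothing in negative degrees. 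This reproduces the claimed formula for $c(0,k)$. A cleaner packaging of both computations is to note $\sum_{k}c(-1,k)t^{k} = t(1-t)^{-2}\big/ (1 - \text{nothing})$ after absorbing the half-integer powers — i.e. $-\sum_{k>0}kt^{k} = -t\,\partial_{t}\bigl(\sum_{k\geq 0}t^{k}\bigr) = -t/(1-t)^{2}$ — but the telescoping verification is the most transparent and is what I would actually write down.

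The only genuine subtlety — and the one place I would be careful — is bookkeeping with the half-integer powers of $Q$: the denominator's base series lives in $\ZZ+\half$ exponents, so its square lives in $1+2\ZZ_{\geq 0}$ exponents (the lowest being $Q^{1}$), and one must confirm that the next correction to the denominator is genuinely $O(Q^{9/2})$ relative to the leading term, hence $O(Q^{4})$ after normalization, so that it does not pollute the $Q^{-1}$ or $Q^{0}$ coefficients. This is immediate from $2k^{2}$ with $k\in\ZZ+\half$ taking values $\tfrac12, \tfrac92, \tfrac{25}{2},\dots$, giving a gap of $4$ between the first two. Likewise in the numerator $k^{2}$ takes values $0,1,4,\dots$, so the only term of order $\leq Q^{1}$ beyond those written is absent until $Q^{4}$. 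Thus no obstacle of substance remains; the proof is a two-line extraction plus two short telescoping identities, and I would present it in exactly that order: (i) record the low-order expansions of numerator and denominator; (ii) divide and read off the $Q^{-1}$ and $Q^{0}$ coefficients as rational functions of $t$; (iii) verify $(t+2+t^{-1})\bigl(-\sum_{k>0}kt^{k}\bigr)=1$ and the analogous identity for $c(0,k)$ by comparing coefficients of $t^{j}$.
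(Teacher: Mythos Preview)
Your approach is exactly what the paper intends --- it simply says ``Examining the first two terms in the $Q$ expansion'' --- so the strategy is right. However, there is a genuine computational slip that invalidates the argument as written.

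The leading term of the denominator base $\sum_{k\in\ZZ+\half}Q^{2k^{2}}(-t)^{k}$ is $Q^{1/2}\bigl((-t)^{1/2}+(-t)^{-1/2}\bigr)$, not $-(t^{1/2}+t^{-1/2})Q^{1/2}$. Squaring gives
\[
\bigl((-t)^{1/2}+(-t)^{-1/2}\bigr)^{2}=(-t)+2+(-t)^{-1}=-\bigl(t-2+t^{-1}\bigr),
\]
so the squared denominator is $-(t-2+t^{-1})\,Q\,(1+O(Q^{4}))$, \emph{not} $(t+2+t^{-1})\,Q$. Consequently
\[
\sum_{k}c(-1,k)t^{k}=\frac{1}{-(t-2+t^{-1})}=\frac{-t}{(1-t)^{2}}=-\sum_{k>0}k\,t^{k},
\]
and the correct telescoping identity is $(t-2+t^{-1})\sum_{k>0}k\,t^{k}=1$, whose $t^{j}$-coefficient is $(j-1)-2j+(j+1)=0$ for $j\geq 1$ and $1$ for $j=0$. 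The identity you wrote, $(t+2+t^{-1})\bigl(-\sum_{k>0}k\,t^{k}\bigr)=1$, is false: its $t^{j}$-coefficient is $-\bigl((j-1)+2j+(j+1)\bigr)=-4j$ for $j\geq 1$ and $-1$ for $j=0$. So the ``short telescoping'' you propose to write down would not close.

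Your $c(0,k)$ computation happens to land on the right expression $(t+t^{-1})\sum_{k>0}k\,t^{k}$ because the sign error in the denominator cancels against the erroneous identification $(t+2+t^{-1})^{-1}=-\sum_{k>0}k\,t^{k}$; but this is accidental, and the derivation as stated is not sound. Once you correct the denominator to $-(t-2+t^{-1})$, both coefficients fall out exactly as you describe, and the rest of your write-up (including the gap analysis for higher-order terms) is fine.
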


Applying the Jacobi triple product identity, we also get the following corollary.

\begin{corollary}\label{cor: sum c(a,k)Q^a t^k = product}
\[
\sum_{a=-1}^{\infty} \sum_{k\in \ZZ} c(a,k) Q^{a+1} t^{k} =
\frac{-t}{(1-t)^{2}} \prod_{n=1}^{\infty} \frac{(1-Q^{2n})(1-tQ^{2n-1})(1-t^{-1}Q^{2n-1})}{(1-Q^{4n})^{2}(1-tQ^{4n})^{2}(1-t^{-1}Q^{4n})^{2}}
\]
\end{corollary}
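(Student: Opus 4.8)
The plan is to start from the closed form proved in Proposition~\ref{prop: generating function for c(a,k) = c(4n-l^2,k) =c(n,l,k)}, multiply both sides by $Q$, and rewrite each theta function via the Jacobi triple product identity. From the product expansion of $\theta_1$ quoted earlier in the excerpt, together with its companion for $\theta_4$, namely
\[
\theta_4(q,y) = \prod_{n=1}^{\infty}(1-q^n)(1-yq^{n-\half})(1-y^{-1}q^{n-\half}),
\]
which is the triple product form of $\theta_4(q,y)=\sum_{k\in\ZZ}q^{k^2/2}(-y)^k$, I would substitute $q=Q^2,\,y=t$ into the first and $q=Q^4,\,y=t$ into the second. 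This gives
\[
\theta_4(Q^2,t) = \prod_{n=1}^{\infty}(1-Q^{2n})(1-tQ^{2n-1})(1-t^{-1}Q^{2n-1}),
\]
\[
\theta_1(Q^4,t) = -iQ^{1/2}t^{-1/2}\prod_{n=1}^{\infty}(1-Q^{4n})(1-tQ^{4n-4})(1-t^{-1}Q^{4n}).
\]

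The one step that needs care is the degenerate $n=1$ term of the middle product in $\theta_1(Q^4,t)$: it equals $(1-tQ^{0})=(1-t)$, and after peeling it off and reindexing $n\mapsto n+1$ the remaining product becomes $\prod_{n\geq 1}(1-tQ^{4n})$. Hence
\[
\theta_1(Q^4,t) = -iQ^{1/2}t^{-1/2}(1-t)\prod_{n=1}^{\infty}(1-Q^{4n})(1-tQ^{4n})(1-t^{-1}Q^{4n}),
\]
and squaring (so that the prefactor $-iQ^{1/2}t^{-1/2}$ collapses to $-Qt^{-1}$),
\[
\theta_1(Q^4,t)^2 = -Qt^{-1}(1-t)^2\prod_{n=1}^{\infty}(1-Q^{4n})^2(1-tQ^{4n})^2(1-t^{-1}Q^{4n})^2.
\]

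Dividing, the ratio $\theta_4(Q^2,t)/\theta_1(Q^4,t)^2$ acquires the overall constant $-t\,Q^{-1}(1-t)^{-2}$, so multiplying the identity of Proposition~\ref{prop: generating function for c(a,k) = c(4n-l^2,k) =c(n,l,k)} through by $Q$ cancels the $Q^{-1}$ and yields exactly the asserted right-hand side, while the left-hand side becomes $\sum_{a\geq -1}\sum_{k\in\ZZ}c(a,k)Q^{a+1}t^k$, as desired. There is no genuine obstacle here beyond bookkeeping; the only place one can slip is the index shift that extracts the $(1-t)$ factor from the $\theta_1$ product, so I would write that step out explicitly and leave the rest as a routine manipulation.
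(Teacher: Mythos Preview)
Your argument is correct and is exactly the approach the paper intends: the paper merely says ``Applying the Jacobi triple product identity, we also get the following corollary'' without further detail, and you have simply written out that application explicitly. The bookkeeping you flag---peeling off the $n=1$ factor $(1-t)$ from the $\theta_1$ product and reindexing---is the only nontrivial step, and you handle it correctly.
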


\subsection{Proof of Proposition~\ref{prop: generating function for c(a,k) =
c(4n-l^2,k) =c(n,l,k)}}

From the product formula for $\theta_{1}(q,t)$ and the fact that
$\theta_{1}(q,t^{-1})=-\theta_{1}(q,t)$, we see we may write
\[
\theta_{1}(q,t)^{-1}\theta_{1}(q,t^{-1})^{-1} = -\theta_{1}(q,t)^{-2}
= q^{-\frac{1}{4}}\frac{t}{(1-t)^{2}}\sum_{i=0}^{\infty} \delta_{i}(t) q^{i}
\]
where $\delta_{0}=1$ and $\delta_{i}(t)\in \ZZ [t,t^{-1}]$. Then
\[
\Ell_{q,y}(\CC^{2} ,t) = q^{-\frac{1}{4}}\frac{t}{(1-t)^{2}}
\sum_{i=0}^{\infty} \delta_{i}(t)q^{i} \sum_{n,m\in \ZZ} \, \,
q^{\half (n+\half )^{2}+\half (m+\half )^{2}} (-yt)^{n+\half}
(-yt^{-1})^{m+\half}. 
\]
If we let $l=m+n+1$ and $b=n-m$ and we note that then $l\equiv  b+1$
mod 2 and
$\half (n+\half )^{2}+\half (m+\half )^{2} = \frac{1}{4}(l^{2}+b^{2})$
we get
\[
\Ell_{q,y}(\CC^{2} ,t) = \frac{-t}{(1-t)^{2}} \sum_{i=0}^{\infty}
\delta_{i}(t)q^{i} \sum_{\begin{smallmatrix} l,b\in \ZZ \\
l\equiv b+1\mod 2 \end{smallmatrix}} q^{\frac{1}{4}(l^{2}+b^{2}-1)}
y^{l}(-t)^{b} 
\]
The terms with $q^{n}y^{l}$ in the above sum occur when
$n=i+\frac{1}{4}(l^{2}+b^{2}-1)$ and thus when $4n-l^{2} =
4i+b^{2}-1\geq -1$. Therefore the $q^{n}y^{l}$ coefficient of
$\Ell_{(q,y)}(\CC^{2} ,t)$ only depends on $a=4n-l^{2}$ and is zero if $a<-1$: 
\begin{align*}
\operatorname{Coef}_{q^{n}y^{l}}\left[\Ell_{(q,y)}(\CC^{2} ,t) \right] &= \sum_{k\in \ZZ } c(n,l,k)t^{k}\\
&=\sum_{k\in \ZZ }c(4n-l^{2},k)t^{k}
\end{align*}
where
\begin{align*}
\sum_{a=-1}^{\infty} \sum_{k\in \ZZ  } c(a,k)Q^{a}t^{k} &=
\frac{-t}{(1-t)^{2}} \sum_{i=0}^{\infty} \sum_{b\in \ZZ}
Q^{4i+b^{2}-1} (-t)^{b}\delta_{i}(t) \\
&=\frac{-tQ^{-1}}{(1-t)^{2}} \sum_{i=0}^{\infty} \delta_{i}(t)Q^{4i}
\sum_{b\in \ZZ} Q^{b^{2}} (-t)^{b} \\
&=\theta_{1}(Q^{4},t)^{-2} \sum_{b\in \ZZ} Q^{b^{2}} (-t)^{b}\\
&= \frac{\sum_{k\in \ZZ}Q^{k^{2}}(-t)^{k}}{\left(\sum_{k\in \ZZ
+\half} Q^{2k^{2}}(-t)^{k} \right)^{2}} \quad .
\end{align*}

\qed 
  
\section{Computing the partition function}\label{sec: computing the
part fnc}

\subsection{Overview}
Our basic strategy for computing the partition function
$Z^{\DTss}_{\Gamma}(X_{\ban})$ is to stratify the Hilbert scheme, where
each strata parameterizes subschemes supported on a prescribed set of
fibers of the map $\pi :X\to \PP^{1}$. Each stratum admits an action
by the Mordell-Weil group of sections on an infinitesimal neighborhood
of a fiber.

We use these group actions to reduce the $\nu$-weighted Euler
characteristic computation to the fixed point set of the group
actions. The fixed points correspond to subschemes which are supported
on an infinitesimal neighborhood of the banana configurations and
which are formally locally given by monomial ideals. We can count
these fixed points (weighted by $\nu$) using a technique adapted from
\cite{MNOP1} to our setting. The outcome is an expression for the
partition function in terms of the topological vertex.

The vertex expression we get has essentially been computed in the
physics literature. The bulk of the computation is done by Hollowood,
Iqbal, and Vafa \cite{Hollowood-Iqbal-Vafa} using geometric
engineering. Their derivation requires a certain geometrically
motivated combinatorial conjecture. The conjecture was a special case
of the Equivariant Crepant Resolution Conjecture for Elliptic Genera
(also called the equivariant DMVV conjecture after
Dijkgraaf-Moore-Verlinde-Verlinde \cite{DMVV}), which was subsequently
proven by Borisov-Libgobner and Waelder
\cite{Borisov-Libgober,Waelder} using motivic integration. We present
a mathematically self-contained version of the Hollowood-Iqbal-Vafa
derivation in \S~\ref{sec: vertex calculation.}.

\subsection{Preliminaries on Notation and Euler Characteristics}
For any scheme $Y$ over $\CC$, let $e(Y)$ be the topological Euler
characteristic of $Y$ in the complex analytic topology. Note that
Euler characteristic is independent of any nilpotent structures,
i.e. $e(Y)=e(Y_{red})$.

For any constructible function $\mu :Y\to \ZZ$, let
\[
e(Y,\mu ) = \sum_{k\in \ZZ} k\cdot e(\mu^{-1}(k))
\]

be the $\mu$-weighted Euler characteristic of $Y$.

We will need the following standard facts about Euler
characterisitics.
\begin{itemize}
\item Euler characteristic defines a ring homomorphism
$e:K_{0}(\Var_{\CC})\to \ZZ$, i.e. it is additive under the
decomposition of a scheme into an open set and its complement, and it
is multiplicative on Cartesian products.
\item For any constructible morphism\footnote{A constructible morphism
is a map which is regular on each piece of a decomposition of its
domain into locally closed subsets.} $f:Y\to Z$ we have (see
\cite{MacPherson-Annals74})
\begin{equation}\label{eqn: e(Y,mu)=e(Z,f_*(mu))}
e(Y,\mu ) = e(Z,f_{*}\mu )
\end{equation}
where $f_{*}\mu $ is the constructible function given by 
\[
(f_{*}\mu )(x) = e(f^{-1}(x),\mu ).
\]
\item If $\CC^{*}$ acts on a scheme $Y$ with fixed point locus
$Y^{\CC^{*}}\subset Y$, then (see \cite{Bialynicki-Birula})
\[
e(Y) = e(Y^{\CC^{*}}). 
\]
\end{itemize}

We will also use the following
\begin{lemma}\label{lem: if e(V)=0 if G acts on V and e(any orbit)=0}
Let $G$ be an algebraic group acting on $V$, a scheme (of finite type
over $\CC$). Let $\mu$ be a $G$-invariant constructible function on
$V$. Suppose that each $G$-orbit has zero Euler characteristic,
i.e. $e(O_{x})=0$  for all $x\in V$. Then $e(V,\mu )=0$. 
\end{lemma}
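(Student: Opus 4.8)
The plan is to reduce the claim to the two standard facts about Euler characteristics already recalled just before the lemma: the pushforward formula \eqref{eqn: e(Y,mu)=e(Z,f_*(mu))}, and the vanishing of the Euler characteristic of a positive-dimensional connected algebraic group (more precisely, of any scheme admitting a free action by such a group, which will be the content of the hypothesis $e(O_x)=0$). First I would stratify $V$ by orbit type. Since $G$ acts on $V$ and $V$ is of finite type over $\CC$, by a standard result on algebraic group actions there is a $G$-invariant stratification $V = \coprod_i V_i$ into locally closed $G$-stable subschemes such that each $V_i$ admits a geometric quotient $q_i : V_i \to V_i/G =: B_i$ which is a morphism of finite-type $\CC$-schemes, and the fibers of $q_i$ are precisely the $G$-orbits in $V_i$. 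By additivity of $e(-,\mu)$ under decomposition into locally closed pieces, it suffices to show $e(V_i, \mu|_{V_i}) = 0$ for each $i$, so we may assume from the start that the orbit map $q : V \to B$ is a morphism of schemes whose fibers are the $G$-orbits.

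Next I would apply the pushforward formula \eqref{eqn: e(Y,mu)=e(Z,f_*(mu))} to $q : V \to B$ and the $G$-invariant constructible function $\mu$:
\[
e(V,\mu) = e(B, q_*\mu), \qquad (q_*\mu)(b) = e(q^{-1}(b), \mu) = e(O_{x_b}, \mu),
\]
where $x_b$ is any point of the orbit $q^{-1}(b)$. Here I use that $q^{-1}(b)$ is a single $G$-orbit $O_{x_b}$. Now $\mu$ is $G$-invariant, hence constant on each orbit, say equal to $c_b \in \ZZ$ on $O_{x_b}$; therefore $e(O_{x_b},\mu) = c_b \cdot e(O_{x_b}) = c_b \cdot 0 = 0$ by the hypothesis $e(O_x)=0$ for all $x$. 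Thus the constructible function $q_*\mu$ is identically zero on $B$, and consequently $e(V,\mu) = e(B, 0) = 0$, as desired.

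The main point requiring care — and the only genuine obstacle — is the existence of a $G$-invariant stratification of $V$ whose strata carry geometric quotients with orbit fibers, in a form to which the constructible pushforward formula \eqref{eqn: e(Y,mu)=e(Z,f_*(mu))} applies. Over $\CC$ this is standard (one can invoke generic-flatness-type arguments, or Rosenlicht's theorem on rational quotients applied inductively on a stratification, or simply work with the underlying reduced schemes since $e$ ignores nilpotents); the constructible pushforward formula only needs $q$ to be a \emph{constructible} morphism, which is automatic for a morphism of finite-type $\CC$-schemes, so one does not even need the quotients to be particularly well-behaved as varieties — one only needs to know set-theoretically that the constructible function $x \mapsto e(O_x, \mu)$ is constructible and vanishes, which follows from the orbit-type decomposition together with $G$-invariance of $\mu$ and $e(O_x) = 0$. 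Everything else is a direct application of the bullet-point facts.
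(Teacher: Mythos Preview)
Your proof is correct and follows essentially the same approach as the paper: stratify $V$ (via Rosenlicht's theorem iterated, after passing to reduced schemes) into $G$-stable locally closed pieces admitting geometric quotients, then apply the pushforward formula \eqref{eqn: e(Y,mu)=e(Z,f_*(mu))} on each stratum and use that $\mu$ is constant on orbits together with $e(O_x)=0$ to conclude that the pushed-forward function vanishes. The paper's proof is organized identically, with the only cosmetic difference being that it separates the reduction from schemes to varieties as an explicit final step.
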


\begin{proof}
By a theorem of Rosenlicht \cite[Thm~2]{Rosenlicht}, we have that for the action
of any algebraic group $G$ on any variety $V$, there is a dense open
set $U\subset V$ and a morphism $\tau :U\to W$ to a variety $W$, such
that for all $x\in U$, $\tau ^{-1}(x)$ is a $G$-orbit. By iteratively
applying the same theorem to $V\setminus U$, we obtain a locally
closed $G$-equivariant stratification $V=\cup_{\alpha}U_{\alpha}$ such
that the $G$-action on each strata has a geometric quotient
\[
\tau_{\alpha}:U_{\alpha}\to W_{\alpha}\,\,\, .
\]
Suppose that every $G$-orbit has zero Euler characterisic,
$e(O_{x})=0$. Let $\mu$ be a $G$-invariant constructible function on $V$,
then
\begin{align*}
e(V,\mu ) & = \sum_{\alpha} e(U_{\alpha},\mu )\\
     & = \sum_{\alpha} e(W_{\alpha} , (\tau_{\alpha})_{*}(\mu ))\\
&=\sum_{\alpha}e(W_{\alpha},0)\\
&=0.
\end{align*}
This proves the lemma in the case where $V$ is a variety. For $V$ a
general scheme of finite type over $\CC$, we can easily construct a
$G$-equivariant stratification of the reduced space of $V$
\[
V_{red}= V_{1} \cup \dotsb \cup V_{N}
\]
such that each strata $V_{i}$ is a variety.
Then 
\[
e(V,\mu )=e(V_{red},\mu )=\sum_{i=1}^{N} e(V_{i},\mu ) = 0.
\]
\end{proof}

We will use the notation
\[
\Hilb^{\bullet}(X_{\ban}) = \sum_{\dvec ,n}
\Hilb^{\betadvec\,\, ,n}(X_{\ban})\, Q_{1}^{d_{1}} Q_{2}^{d_{2}}
Q_{3}^{d_{3}} (-p)^{n}
\]
where we regard the right hand side as a formal series in
$Q_{1},Q_{2},Q_{3}$, Laurent in $p$, having coefficients in
$K_{0}(\Var_{\CC})$, the Grothendieck group of varieties\footnote{We
note that $\Hilb^{0,0}(X_{\ban}$) is a single point corresponding to
the empty subscheme. Thus the constant term of the series
$\Hilb^{\bullet} (X_{\ban})$ is 1.}. We extend the operations in the
Grothendieck group (addition, multiplication, and Euler
characteristic) to the series in the obvious way. So for example, in
this notation, the partition function is given by:
\[
Z^{\DTss}_{\Gamma}(X_{\ban}) = e\left(\Hilb^{\bullet}(X_{\ban}),\nu  \right).
\]

We will apply the above notation more generally. Whenever we have any
collection $\{ A^{\dvec ,n} \}$ elements of a set (for example $\ZZ$
or $K_{0}(\Var_{\CC})$) indexed by $(\dvec ,n)$, we will write:

\[
A^{\bullet} = \sum_{\dvec ,n} A^{\dvec ,n} \, Q_{1}^{d_{1}} Q_{2}^{d_{2}}
Q_{3}^{d_{3}} (-p)^{n}.
\]

\subsection{Pushing forward the $\nu$-weighted Euler characteristic measure.}

Let 
\[
\Conf^{k} \PP^{1} =\Sym^{k}\PP^{1} \setminus \Delta
\]
be the configuration space of $k$ unordered, distinct points in
$\PP^{1}$, i.e. the $k$th symmetric product of $\PP^{1}$ with the big
diagonal deleted. Let
\[
\Conf \PP^{1} = \bigcup_{k} \Conf^{k}\PP^{1}.
\]

We define a constructible morphism by
\begin{align*}
\rho^{\dvec ,n} : \Hilb^{\beta_{\dvec}\, ,n}(X_{\ban }) & \to \Conf \PP^{1}\\
Z&\mapsto \operatorname{Supp}(\pi_{*}\OO_{Z})
\end{align*}
Then using equation \eqref{eqn: e(Y,mu)=e(Z,f_*(mu))} and our bullet notation, we have
\begin{align*}
Z_{\Gamma }^{\DTss }(X_{\ban })& = e\left(\Hilb^{\bullet}(X_{\ban }),\nu \right)\\
& = e\left( \Conf \PP^{1},\rho^{\bullet}_{*} \nu \right). 
\end{align*}
Note that the constructible function $\rho^{\bullet}_{*}\nu$ takes
values in $\ZZ [[Q_{1},Q_{2},Q_{3}]]((p)).$ 

\subsection{Subschemes supported in an infinitesimal neighborhood of a
fiber.}

From here on out, we will write $X$ for $X_{\ban}$ for the sake of
brevity when there is no ambiguity. We will also suppress the
superscripts $\betacomman$ when they are understood and/or
unimportant.

\begin{definition}\label{def: Fy, Fy(k), and FyHat}
Let $F_{y}$
be the fiber of $\pi :X\to \PP^{1}$ over $y\in \PP^{1}$. Let
$F_{y}^{(k)}$ be the $k$-order infinitesimal neighborhood of $F_{y}$
in $X$ and let $\Fhat_{y}$ be the formal neighborhood of $F_{y}$ in
$X$. We define 
\[
\Hilb (\Fhat_{y} )\subset \Hilb (X)
\]
to be the locally closed subscheme of $\Hilb (X)$
parameterizing subschemes $Z\subset X$ supported on $\Fhat_{y}$,
i.e. subschemes which are set theoretically, but necessarily scheme
theoretically, supported in $F_{y}$.  Finally, we define 
\[
\HilbHat  (\Fhat _{y} )
\]
to be the formal neighborhood of
$\Hilb (\Fhat_{y})$ in $\Hilb (X)$.
\end{definition}

The value of the Behrend function $\nu :\Hilb (X)\to \ZZ$
at a point only depends on the formal neighborhood of that point
\cite{Jiang-Motivic-Milnor-fibre} inside the Hilbert
scheme. Therefore $\nu$ restricted to $\Hilb (\Fhat_{y})$
is completely determined by the formal scheme
$\HilbHat (\Fhat_{y})$.

\begin{remark}
The closed points of $\Hilb (\Fhat_{y})$ and $\HilbHat (\Fhat_{y})$
are of course the same; they correspond to subschemes $Z\subset X$
such that $Z_{red}\subset F_{y}$. However, $\Hilb (\Fhat_{y})$ and
$\HilbHat (\Fhat_{y})$ classify different families of subschemes. For
example, the infinitesimal deformations of $Z$ parameterized by $\Hilb
(\Fhat_{y})$ must preserve the closed condition $Z_{red}\subset
F_{y}$, whereas $\HilbHat (\Fhat_{y})$ includes \emph{all}
infinitesimal deformations of $Z$, even those which (infinitesimally)
violate the condition $Z_{red}\subset X$.
\end{remark}

\subsection{Mordell-Weil groups and actions on $\Hilb
(\Fhat_{y})$.}\label{subsec: Mordell weil gps and actions}

Let 
\[
X^{\circ}=\left\{x\in X \text{ such that $\pi :X \to \PP^{1}$ is
smooth at $x$} \right\}.
\]
In other words, $X^{\circ}$ is $X$ with the 12 banana configurations
removed. Then, after fixing a section $s_{0}:\PP^{1}\to X^{\circ}$, 
\[
\pi^{\circ } :X^{\circ}\to \PP^{1}
\]
has the structure of an Abelian group scheme
over $\PP^{1}$. Let $F_{y}^{\circ}=X^{\circ}\cap F_{y}$ be the group
of the fiber over $y$. Let 
\[
\{x_{1},\dotsc ,x_{12} \}\subset \PP^{1}
\]
be the points with singular fiber. Then for $y\not \in \{x_{1},\dotsc
x_{12} \}$, $F_{y}^{\circ} \cong E\times E$, where $E$ is an elliptic
curve, and for $y\in \{x_{1},\dotsc ,x_{12} \}$, $F_{y}^{\circ }\cong
\CC^{*}\times \CC^{*}$.

\begin{definition}\label{defn: MW groups}
Let $\pi^{\circ}:X_{B}^{\circ}\to  B$ and $\pi :X_{B}\to B$ denote the
$B$-schemes obtained from $X^{\circ}$ and $X$ by some base change $B\to
\PP^{1}$. We define $\MW (B)$ be the Mordell-Weil group of sections of
$\pi^{0}$ over $B$.
\end{definition}

If $B\to \Spec \CC$ is finite, then $\MW
(B)$ is the Weil restriction of $X^{\circ}_{B}\to B$ with respect to
$B\to \Spec \CC$ and is thus an algebraic group over $\CC$, see
\cite[\S 7 Thm 4]{Bosch-Lutkebohmert-Raynaud}.

We get an action of $\MW (B)$ on $X_{B}$ defined as follows
(c.f. \cite[\S 7 Thm 6]{Bosch-Lutkebohmert-Raynaud}).  The group
scheme structure morphism $X^{\circ}_{B}\times_{B}X^{\circ}_{B}\to
X^{\circ}_{B}$ extends (see \cite{Deligne-Rapoport}) to a morphism
\[
\begin{tikzcd}
&X_{B}^{\circ}\times_{B} X_{B} \arrow[r,"+"]&  X_{B}.
\end{tikzcd}
\]
We define the $\MW (B)$ action on $X_{B}$ by the composition
\[
\begin{tikzcd}
\MW (B) \times X_{B}  \arrow[r,"\phi "] & X_{B}^{\circ }\times_{B}X_{B} \arrow[r,"+"]&  X_{B}
\end{tikzcd}
\]
where $\phi ={(ev\circ(Id\times\pi),pr_{X_{B}})}$,
\[
ev:\MW (B) \times B\to X_{B}
\]
is the tautological evaluation map $(s,y)\mapsto s(y)$, and 
\[
pr_{X_{B}}:\MW (B)\times X_{B}\to X_{B}
\]
is projection. $\phi$ is well defined because $\pi \circ ev\circ
(Id\times \pi )=\pi \circ pr_{X_{B}}$.

Concretely, if $x\in X_{B}$ is a point then the action of $s$ on $x$
is given by translation: $(s,x)\mapsto s(\pi (x))+x$.

Let $\Delta_{y}^{(k)} \subset \PP^{1}$ be the $k$th order thickening
of $y\in \PP^{1}$. Since
\[
\Delta_{y}^{(k)}\cong \Spec \CC[\epsilon ]/\epsilon^{k+1}\to \Spec \CC
\]
is finite, $\MW (\Delta_{y}^{(k)})$ is an algebraic group. Restriction
of a section to the closed fiber expresses $\MW (\Delta_{y}^{(k)})$ as
an extension of $F^{\circ}_{y}$ by a vector group of some dimension
$D=D(k)$:
\begin{equation}\label{eqn: exact sequence for MW(k)}
\begin{tikzcd}
0\arrow{r} &\CC^{D} \arrow{r} &
\MW(\Delta _{y}^{(k)}) \arrow[r,"r"] &
F^{\circ}_{y}  \arrow{r}
&0
\end{tikzcd}
\end{equation}

We write $F_{k}^{(k)}$ for $X_{\Delta_{y}^{(k)}}$, i.e. the $k$th
order thickening of the fiber $F_{y}$. The action of $\MW
(\Delta_{y}^{(k)})$ on $F_{y}^{(k)}$ is compatible with the
restriction homomorphisms
\[
\MW(\Delta ^{(k+1)}_{y}) \to \MW(\Delta ^{(k)}_{y})
\]
and the inclusions 
\[
F_{y}^{(k)}\subset F_{y}^{(k+1)}.
\]
Let
\[
\MW^{(\infty )}_{y} = \lim_{\stackrel{\leftarrow}{k}} \MW(\Delta ^{(k)}_{y})
\]
be the inverse limit group. Then by construction, $\MW^{(\infty
)}_{y}$ acts on $\Fhat_{y}$, and this induces an action of
$\MW^{(\infty )}_{y}$ on $\Hilb (\Fhat_{y})$ and on $\HilbHat
(\Fhat_{y})$. Note that 
\[
\Hilb^{\betacomman}(\Fhat_{y}) \subset \Hilb^{\betacomman}(F_{y}^{(N)})
\]
for some large $N=N(\betacomman)$ which depends on $\beta$ and
$n$. Therefore, $\Hilb^{\betacomman}(\Fhat_{y})$ is acted on by the
algebraic group $\MW(\Delta ^{(N)}_{y})$. However, $\MW(\Delta
^{(N)}_{y})$ does not act on $\HilbHat^{\betacomman}(\Fhat_{y})$ since
this includes all infinitesimal deformations of any $Z\subset
\Fhat_{y}$, which can involve finite neighborhoods of arbitrarily big
orders. Thus only the pro-algebraic group $\MW^{(\infty )}_{y}$ acts
on $\HilbHat^{\betacomman}(\Fhat_{y})$.

Since the Behrend function $\nu :\Hilb (\Fhat_{y})\to \ZZ $ is determined by
$\HilbHat (\Fhat_{y})$, the action of $\MW^{(\infty )}_{y}$ must preserve
$\nu$.  Consequently we have:
\begin{lemma}\label{lem: MW(k) action preserves nu}
The Behrend function $\nu$ is invariant under the action of
$\MW(\Delta ^{(N)}_{y})$ on $\Hilb^{\betacomman}(\Fhat_{y})$.
\end{lemma}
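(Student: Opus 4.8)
The plan is to deduce this directly from the fact recalled just above, due to Jiang \cite{Jiang-Motivic-Milnor-fibre}, that the value of the Behrend function at a closed point $[Z]$ of any Hilbert scheme depends only on the formal neighborhood of $[Z]$ in that Hilbert scheme. First I would observe that the inverse limit group $\MW^{(\infty)}_{y}$ acts on the formal scheme $\Fhat_{y}$ by automorphisms over $\Spec \CC$ --- concretely, by the translations $x\mapsto s(\pi (x))+x$ --- and hence acts by automorphisms of formal schemes on $\HilbHat (\Fhat_{y})$ and, compatibly, by automorphisms of schemes on $\Hilb (\Fhat_{y})$. An automorphism of $\HilbHat (\Fhat_{y})$ carries the formal neighborhood of a point isomorphically onto the formal neighborhood of its image; since $\nu|_{\Hilb (\Fhat_{y})}$ is, as noted above, completely determined by the formal scheme $\HilbHat (\Fhat_{y})$, it follows that $\nu$ is $\MW^{(\infty)}_{y}$-invariant.

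Next I would upgrade this to the claimed invariance under the algebraic group $\MW(\Delta^{(N)}_{y})$. Fix $(\beta ,n)$ and, as in the discussion preceding the statement, choose $N=N(\beta ,n)$ large enough that $\Hilb^{\betacomman}(\Fhat_{y})\subset \Hilb^{\betacomman}(F_{y}^{(N)})$. The action of $\MW(\Delta^{(N)}_{y})$ on $\Hilb^{\betacomman}(F_{y}^{(N)})$ induced by its action on $F_{y}^{(N)}$ restricts to an action on the locally closed subscheme $\Hilb^{\betacomman}(\Fhat_{y})$, and --- because the $\MW(\Delta^{(k)}_{y})$-actions are compatible with the restriction homomorphisms $\MW(\Delta^{(k+1)}_{y})\to \MW(\Delta^{(k)}_{y})$ and with the inclusions $F_{y}^{(k)}\subset F_{y}^{(k+1)}$ --- this is precisely the action obtained from the $\MW^{(\infty)}_{y}$-action via the projection $\MW^{(\infty)}_{y}\to \MW(\Delta^{(N)}_{y})$. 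Since $\nu$ is invariant under the larger $\MW^{(\infty)}_{y}$-action, it is a fortiori invariant under $\MW(\Delta^{(N)}_{y})$, which is the assertion of the lemma.

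The only real point requiring care, and hence the main obstacle, is the comparison of the two actions: the genuine algebraic $\MW(\Delta^{(N)}_{y})$-action on the finite-type scheme $\Hilb^{\betacomman}(\Fhat_{y})$ versus the pro-algebraic $\MW^{(\infty)}_{y}$-action that is visible on the formal scheme $\HilbHat^{\betacomman}(\Fhat_{y})$, together with the fact that it is the latter, formal-scheme-level action that feeds into Jiang's theorem. Once one notes that any finite-type family of subschemes of $\Fhat_{y}$ in a fixed class $(\beta ,n)$ already lives in some $F_{y}^{(N)}$, so that the pro-algebraic action on such a family factors through the finite quotient $\MW(\Delta^{(N)}_{y})$, the two actions coincide and the invariance is immediate; no estimates are needed, the content being entirely in the set-up already established.
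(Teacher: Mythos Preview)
Your argument is correct and is exactly the paper's approach: the lemma is stated in the paper as an immediate consequence of the preceding sentence, which observes that $\nu$ on $\Hilb(\Fhat_{y})$ is determined by $\HilbHat(\Fhat_{y})$ and hence is preserved by the $\MW^{(\infty)}_{y}$-action, from which invariance under the quotient $\MW(\Delta^{(N)}_{y})$ follows. You have simply spelled out the compatibility between the pro-algebraic and algebraic actions that the paper takes for granted.
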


In general, we do not know if the sequence \eqref{eqn: exact sequence for
MW(k)}  splits.  However, if $F_{y}$ is a singular fiber so that
$F^{\circ}_{y}\cong \CC^{*}\times \CC^{*}$, then \eqref{eqn: exact
sequence for MW(k)} must split:

\[
\begin{tikzcd}
0\arrow{r} &\CC^{D} \arrow{r} &
\MW(\Delta _{y}^{(k)})
\arrow[r,"r"'] & \arrow[l,bend right = 15]\arrow{r}
\CC^{*}\times \CC^{*}
& 1
\end{tikzcd}
\]
because then $\MW(\Delta _{y}^{(k)})$ is an affine commutative algebraic
group over $\CC$ and hence a product of a torus and a vector space
group.  Consequently, we have the following
\begin{lemma}\label{lem: C*xC* acts on Hilb(Fhatsing)}
The group $\CC^{*}\times \CC^{*}$ acts on $\Hilb (\Fhat_{\sing})$
where $\Fhat_{\sing}$ is any singular fiber. Moreover, the action
extends to  $\HilbHat (\Fhat_{\sing})$ and thus it
preserves the Behrend function $\nu$. 
\end{lemma}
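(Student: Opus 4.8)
The plan is to exhibit $\CC^{*}\times \CC^{*}$ as a genuine (pro-)algebraic subgroup of the inverse-limit group $\MW^{(\infty)}_{y}$ attached to a singular point $y\in\{x_{1},\dots,x_{12}\}$, and then simply restrict along that subgroup the $\MW^{(\infty)}_{y}$-actions on $\Hilb(\Fhat_{y})$ and $\HilbHat(\Fhat_{y})$ that are already constructed. First I would invoke the structure theory of commutative affine algebraic groups over $\CC$: since $\MW(\Delta_{y}^{(k)})$ is an extension of the affine group $F^{\circ}_{y}\cong \CC^{*}\times \CC^{*}$ by the affine group $\CC^{D}$, it is itself affine, commutative and connected, hence canonically a direct product $T_{k}\times U_{k}$ of its unique maximal torus $T_{k}$ (the set of semisimple elements) with its maximal connected unipotent subgroup $U_{k}\cong \CC^{D}$ (the set of unipotent elements). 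The kernel $\CC^{D}$ of $r$ in \eqref{eqn: exact sequence for MW(k)} is connected unipotent of dimension $D=\dim U_{k}$, so it equals $U_{k}$; therefore $r$ restricts to an isomorphism $r|_{T_{k}}\colon T_{k}\xrightarrow{\ \sim\ }F^{\circ}_{y}$. This is precisely the splitting already recorded just before the lemma.

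Next I would check that these torus splittings are compatible along the tower. The transition maps $\MW(\Delta_{y}^{(k+1)})\to \MW(\Delta_{y}^{(k)})$ are surjective homomorphisms of commutative affine groups, so they carry semisimple elements to semisimple elements and unipotent to unipotent; together with surjectivity and the direct-product decompositions this forces $T_{k+1}\twoheadrightarrow T_{k}$ (and $U_{k+1}\twoheadrightarrow U_{k}$). Since the restriction homomorphisms $r$ commute with the transition maps, and $r|_{T_{k}}$, $r|_{T_{k+1}}$ are both isomorphisms onto the same group $F^{\circ}_{y}$, the induced map $T_{k+1}\to T_{k}$ is an isomorphism, identified under these trivializations with $\mathrm{id}_{\CC^{*}\times\CC^{*}}$. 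Passing to the inverse limit, which is left exact, gives a (closed) subgroup $\CC^{*}\times\CC^{*}\cong \varprojlim_{k}T_{k}\hookrightarrow \varprojlim_{k}\MW(\Delta_{y}^{(k)})=\MW^{(\infty)}_{y}$.

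With this subgroup in hand the lemma follows at once: $\MW^{(\infty)}_{y}$ acts on $\Fhat_{\sing}$ by construction, hence on $\Hilb(\Fhat_{\sing})$ and on the formal neighborhood $\HilbHat(\Fhat_{\sing})\subset\Hilb(X)$, and restricting these actions along $\CC^{*}\times\CC^{*}\hookrightarrow \MW^{(\infty)}_{y}$ produces the asserted $\CC^{*}\times\CC^{*}$-actions, which on closed points are translation by sections taking values in the torus $\CC^{*}\times\CC^{*}\subset F^{\circ}_{y}$. Finally, since $\nu\colon\Hilb(\Fhat_{\sing})\to\ZZ$ depends only on the formal scheme $\HilbHat(\Fhat_{\sing})$ by \cite{Jiang-Motivic-Milnor-fibre} (exactly as used for Lemma~\ref{lem: MW(k) action preserves nu}), and $\CC^{*}\times\CC^{*}$ acts on $\HilbHat(\Fhat_{\sing})$, the Behrend function $\nu$ is $\CC^{*}\times\CC^{*}$-invariant.

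The only point requiring real care is the second paragraph: checking that the finite-order torus splittings are mutually compatible, so that one obtains a subgroup of the pro-algebraic group $\MW^{(\infty)}_{y}$ and not merely a torus sitting inside each individual $\MW(\Delta_{y}^{(k)})$. This is where the singular fiber is genuinely special: the maximal torus is canonical (an arbitrary splitting of \eqref{eqn: exact sequence for MW(k)} need not even exist in the smooth case, let alone be compatible across $k$), and it is exactly this canonicity that allows the resulting action to extend all the way to $\HilbHat(\Fhat_{\sing})$ rather than stopping at the finite level $\Hilb^{\beta,n}(F_{y}^{(N)})$, which in turn is what lets us conclude invariance of $\nu$.
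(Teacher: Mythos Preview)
Your proposal is correct and follows essentially the same approach as the paper: the paper observes that for a singular fiber $\MW(\Delta_{y}^{(k)})$ is a commutative affine algebraic group over $\CC$, hence canonically a product of a torus and a vector group, which splits the sequence~\eqref{eqn: exact sequence for MW(k)} and yields the lemma as an immediate consequence. Your write-up makes explicit the compatibility of these canonical torus factors across the tower (needed to get a genuine subgroup of $\MW^{(\infty)}_{y}$ and hence an action on $\HilbHat(\Fhat_{\sing})$), a point the paper leaves implicit in the word ``Consequently.''
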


\subsection{Reduction of the computation to the singular fibers.}

\begin{lemma}\label{lem: e(Hilb(Fhat),nu)=0 for smooth fibers}
Let $F_{y}$ be a smooth fiber, then
\[
e(\Hilb(\Fhat_{y}),\nu ) = 0.
\]
\end{lemma}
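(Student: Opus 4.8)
The plan is to apply Lemma~\ref{lem: if e(V)=0 if G acts on V and e(any orbit)=0} to the action of the Mordell--Weil group $G:=\MW(\Delta_{y}^{(N)})$ on $\Hilb^{\betacomman}(\Fhat_{y})$, treating one pair $(\beta,n)$ at a time, with $N=N(\betacomman)$ large enough that $\Hilb^{\betacomman}(\Fhat_{y})\subset\Hilb^{\betacomman}(F_{y}^{(N)})$. The class $(\beta,n)=(0,0)$ parametrizes only the empty subscheme and is irrelevant here, so we may assume $(\beta,n)\neq(0,0)$; then every $Z$ in sight is nonempty. By Lemma~\ref{lem: MW(k) action preserves nu} the Behrend function $\nu$ is $G$-invariant, so it will suffice to show that every $G$-orbit in $\Hilb^{\betacomman}(\Fhat_{y})$ has zero Euler characteristic.

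Since $F_{y}$ is smooth we have $F_{y}^{\circ}=F_{y}\cong E\times E$, so the exact sequence~\eqref{eqn: exact sequence for MW(k)} presents the commutative algebraic group $G$ as an extension $0\to U\to G\xrightarrow{r}A\to 0$ of the abelian surface $A:=F_{y}$ by the vector group $U:=\CC^{D}$. Fix $Z$, put $S:=\Stab_{G}(Z)$ so that the orbit of $Z$ is isomorphic to $G/S$, and let $\bar{S}:=r(S)$, a closed subgroup of $A$. First I would compute $e(G/S)$: the map $r$ induces a surjection $G/S\to A/\bar{S}$ which is a fiber bundle with fiber $SU/S\cong U/(U\cap S)$, and since $U\cap S$ is a closed subgroup of the vector group $U$ it is a linear subspace, so the fiber is an affine space $\CC^{m}$; since a $\CC^{m}$-bundle is Zariski-locally trivial, Euler characteristics multiply and $e(G/S)=e(A/\bar{S})$. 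But $A/\bar{S}$ is an abelian variety of dimension $2-\dim\bar{S}$, and an abelian variety of positive dimension has vanishing Euler characteristic, so $e(G/S)=0$ unless $\bar{S}=A$, i.e.\ unless $r|_{S}\colon S\to A$ is surjective.

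The remaining point, which I expect to be the crux, is to exclude $r|_{S}$ surjective. Recall that a section $s\in\MW(\Delta_{y}^{(N)})$ preserves the closed fiber $F_{y}\subset F_{y}^{(N)}$ and acts on it as translation by the point $r(s)\in F_{y}$. If $r|_{S}$ were onto, then for every $a\in A=F_{y}$ there would be $s\in S$ fixing $Z$ whose action on $F_{y}$ is translation by $a$; since $s$ preserves $Z$ it preserves $Z_{red}\subset F_{y}$, so $Z_{red}$ would be invariant under every translation of $E\times E$. The translation action of $E\times E$ on $F_{y}$ being simply transitive, this forces $Z_{red}=\emptyset$ or $Z_{red}=F_{y}$; the former is impossible since $Z\neq\emptyset$, and the latter would give $F_{y}\subset Z$, contradicting $\dim Z\leq 1$. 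Hence every $G$-orbit has zero Euler characteristic, and Lemma~\ref{lem: if e(V)=0 if G acts on V and e(any orbit)=0} gives $e(\Hilb^{\betacomman}(\Fhat_{y}),\nu)=0$. Everything except the last paragraph is a routine unwinding of~\eqref{eqn: exact sequence for MW(k)} together with the multiplicativity of Euler characteristic over affine-space bundles.
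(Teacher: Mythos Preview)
Your proof is correct and follows essentially the same approach as the paper: both apply Lemma~\ref{lem: if e(V)=0 if G acts on V and e(any orbit)=0} to the $\MW(\Delta_y^{(N)})$-action, use the exact sequence~\eqref{eqn: exact sequence for MW(k)} to fiber the orbit $G/S$ over $F_y/r(S)$, and rule out $r(S)=F_y$ by observing that not every translation of $E\times E$ can preserve $\operatorname{Supp}(Z)$. Your write-up is in fact a bit more careful than the paper's in two places---you explicitly set aside the empty subscheme, and you identify the fiber of $G/S\to A/\bar S$ as an affine space to justify multiplicativity of Euler characteristic---but the architecture is identical.
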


\begin{proof}
By Lemma~\ref{lem: MW(k) action preserves nu}, $\MW =\MW(\Delta
_{y}^{N(\beta ,n)})$ acts on $\Hilb^{\beta_{\dvec}\, ,n}(\Fhat_{y})$
preserving $\nu$. Moreover, the action is fixed point free since the
group $\MW$ acts transitively on $\Fhat_{y}$. Let $\Stab_{x}$ be the
$\MW$-stabilizer of $x\in \Hilb (\Fhat_{y})$ and let $Z_{x}\subset
\Fhat_{y}$ be the subscheme corresponding to $x$. Then the image
$r(\Stab_{x})$ is a proper subgroup of $F_{y}$ since there is some
element in $F_{y}$ which does not preserve
$\operatorname{Supp}(Z_{x})$. Then the orbit of $x$, $O_{x}=\MW
/\Stab_{x}$ is an Abelian group fiting into the following exact
sequence
\[
0\longrightarrow \frac{\CC^{D}}{\Stab_{x}\cap \,\,\CC^{D}} \longrightarrow O_{x}\longrightarrow \frac{F_{y}}{r(\Stab_{x})}\longrightarrow 0.
\]
Therefore, $O_{x}$ is given as the total space of a smooth fibration
with base $F_{y}/r(\Stab_{x})$, a positive dimensional Abelian
variety. It follows that $e(O_{x})=0$.  We then can apply
Lemma~\ref{lem: if e(V)=0 if G acts on V and e(any orbit)=0} with
$G=\MW $, $V=\Hilb (\Fhat_{y})$, and $\mu =\nu $ to complete the proof.

\end{proof}

Recall that $Z_{\Gamma}^{\DTss} (X) = e(\Conf
\PP^{1},\rho^{\bullet}_{*}\nu )$. To compute $\rho^{\bullet}_{*}\nu$, 
we note that the preimage
\begin{align*}
(\rho^{\bullet} )^{-1}(\{y_{1},\dotsc ,y_{k} \})&  = \Hilb^{\bullet }(\Fhat_{y_{1}}\cup \dotsb \cup \Fhat_{y_{k}}) \\
&= \prod_{i=1}^{k}\Hilb^{\bullet}(\Fhat_{y_{i}})
\end{align*}
where the product takes place in the ring
$K_{0}(\Var_{\CC})[[Q_{1},Q_{2},Q_{3}]]((p))$.

Lemma~\ref{lem: e(Hilb(Fhat),nu)=0 for smooth fibers} implies that
\[
(\rho^{\bullet}_{*}\nu)(\{y_{1},\dotsc ,y_{k} \}) = 0\quad\text{ if }\quad  \{y_{1},\dotsc ,y_{k} \}\not \subset \{x_{1},\dotsc ,x_{12} \}
\]
where $F_{x_{1}},\dotsc ,F_{x_{12}}$ are the 12 singular fibers.

In other words, we have shown that the constructible function
$\rho^{*}_{\bullet}\nu$ is supported on $\Conf (\{x_{1},\dotsc ,x_{12}
\})\subset \Conf \PP^{1}$. Therefore
\begin{align*}
Z^{\DTss}_{\Gamma}(X) &= e(\Conf \PP^{1},\nu )\\
&=e(\Conf (\{x_{1},\dotsc ,x_{12} \}),\rho^{*}_{\bullet}) \\
&= e(\Hilb^{\bullet} (\Fhat_{x_{1}} \cup \dotsb \cup  \Fhat_{x_{12}}),\nu )\\
&=\prod_{i=1}^{12}e(\Hilb^{\bullet}(\Fhat_{x_{i}}),\nu ).\\
\end{align*}
Since all the formal neighborhoods $\Fhat_{x_{i}}$ are isomorphic, we
will write $\Fhat_{\ban}$ for this formal scheme, i.e. the formal
neighborhood of $F_{\ban}$, a fiber containing a banana
configuration. In conclusion we have
\begin{equation}\label{eqn: Z = (Z(Fsing))^12}
Z^{\DTss}_{\Gamma}(X_{\ban}) = e\left(\HilbHat^{\bullet}(\Fhat_{\ban } ),\nu \right)^{12}.
\end{equation}
Note that here $\nu$ is the restriction of the Behrend function on
$\Hilb (X)$ to $\Hilb (\Fhat_{\ban})$, but since this is determined
by $\HilbHat (\Fhat_{\ban})$, we may regard $\nu$ as the Behrend
function of the formal scheme $\HilbHat (\Fhat_{\ban})$. We will
write
\[
Z^{\DTss}(\Fhat_{\ban}) =  e\left(\HilbHat^{\bullet}(\Fhat_{\ban } ),\nu \right).
\]

\begin{remark}\label{rem: proof applies to general bananafolds}
For more general banana manifolds, the same proof shows that
equation~\ref{eqn: Z = (Z(Fsing))^12} holds with the 12 replaced by
$N$, the number of singular fibers (i.e. the number of banana
configurations).
\end{remark}

\subsection{Reduction to $\CC^{*}\times \CC^{*}$-fixed
subschemes.}\label{subsec: reduction to C*-fixed subschemes}

The $\CC^{*}\times \CC^{*}$ action on $\Fhat_{\ban}$ preserves the
canonical class by construction. In particular, the action of
$\CC^{*}\times \CC^{*}$ is compatible with the symmetric obstruction
theory \cite{Behrend-Fantechi08} of $\Hilb (X)$, restricted to
$\HilbHat (X)$. We note that in order to work in this formal setting,
we must use the symmetric obstruction theory associated to the formal
moduli space (defined by Jiang in
\cite{Jiang-Motivic-Milnor-fibre}). The result of Behrend and Fantechi \cite{Behrend-Fantechi08}
is that for an isolated fixed point $P\in \HilbHat (\Fhat_{\ban})$,
the value of the Behrend function is given by
\[
\nu (P) = (-1)^{\dim T_{P}}
\] 
where
\[
T_{P} = T_{P} \HilbHat (\Fhat_{\ban})
\]
is the Zariski tangent space of $P\in  \HilbHat (\Fhat_{\ban})$. We
note that 
\[
T_{P}= \Ext^{1}_{0}(I_{Z_{P}},I_{Z_{P}})
\]
where $Z_{P}$ is the subscheme associated to $P$ (see \cite{MNOP1}).

We will see that the fixed points of the action are
isolated. It then follows that
\begin{align*}
Z^{\DTss}(\Fhat_{\ban}) &= e\left(\HilbHat^{\bullet}(\Fhat_{\ban}),\nu \right)\\
 &=e\left(\HilbHat^{\bullet}(\Fhat_{\ban})^{\CC^{*}\times \CC^{*}},\nu \right) \\
&=\sum_{P\in \HilbHat (\Fhat_{\ban})^{\CC^{*}\times \CC^{*}}}
(-1)^{\dim T_{P}} Q_{1}^{d_{1}(P)}
Q_{2}^{d_{2}(P)} Q_{3}^{d_{3}(P)} (-p)^{\chi (\OO_{Z_{P}})}
\end{align*}
where $d_{i}(P)$ is defined by:
\[
[Z_{P}] = d_{1}(P)C_{1}+ d_{2}(P)C_{2}+ d_{3}(P)C_{3}.
\]

\begin{proposition}\label{prop: fixed points of Hilb are given by R1,R2,R3,pi1,pi2}
The $\CC^{*}\times \CC^{*}$ fixed points $P\in \HilbHat
(\Fhat_{\ban})^{\CC^{*}\times \CC^{*}}$ are isolated and the fixed
point set is in bijective correspondence with the set
$\{R_{1},R_{2},R_{3},\pi_{1},\pi_{2} \}$ where $R_{1},R_{2},R_{3}$ is
a triple of 2D partitions and $\pi_{1},\pi_{2}$ is a pair of 3D
partitions asymptotic to $(R_{1},R_{2},R_{3})$ and
$(R'_{1},R'_{2},R'_{3})$ respectively\footnote{see for example
\cite[Def.~1]{Bryan-Kool-Young} for a definition of 3D partitions
asymptotic to a triple of 2D partitions. Also, $A'$ denotes the
partition conjugate to $A$, see \S~\ref{sec: vertex calculation.}.}.
Moreover, the discrete invariants of the corresponding subscheme are
given by
\begin{align*}
d_{i}(P) & = |R_{i}|\\
\chi (\OO_{Z_{P}}) &= |\pi_{1}|+|\pi_{2}| + \half \sum_{i=1}^{3}
||R_{i}||^{2} + ||R'_{i}||^{2} \\
(-1)^{\dim T_{P}} & = (-1)^{\chi (\OO_{Z_{P}}) +|R_{1}|+|R_{2}|+|R_{3}|} 
\end{align*}
where $|R_{i}|$ is the size of a partition, $|\pi_{i}|$ is the
normalized volume \cite{Bryan-Kool-Young,Ok-Re-Va}, and
$||R_{i}||^{2}$ denotes the sum of the squares of the parts
(c.f. \S~\ref{sec: vertex calculation.}).
\end{proposition}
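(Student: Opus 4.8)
The plan is to analyze the structure of a $\CC^{*}\times\CC^{*}$-fixed subscheme $Z\subset\Fhat_{\ban}$ by decomposing it according to its support on the banana configuration $C = C_1\cup C_2\cup C_3$ and the two distinguished points $p,q$. First I would recall (from Definition~\ref{defn: banana configuration}) that near each of $p$ and $q$ there are formal coordinates in which the three banana curves become the coordinate axes, and the $\CC^{*}\times\CC^{*}$-action is the standard toric action on $\widehat{\CC^3}$; away from $p,q$ the formal neighborhood $\Fhat_{\ban}$ looks like the total space of $\OO(-1)\oplus\OO(-1)$ over each $\CC^{*}\subset C_i$, again with a compatible torus action. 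A fixed $Z$ must be set-theoretically supported on $C$ (and possibly fattened in the normal directions), and torus-invariance forces $Z$ to be, in each formal chart, a monomial ideal. The combinatorial data extracted from a monomial ideal near a point of a smooth toric $3$-fold where the curve is an axis is: a $2$D partition transverse to the axis (the ``thickening profile'' of the curve), which must be constant along the open $\CC^{*}$-part by invariance; and at the two special points $p,q$ a $3$D partition asymptotic to the three incoming $2$D partitions. So the bookkeeping is: $R_1,R_2,R_3$ are the transverse profiles of $Z$ along $C_1,C_2,C_3$, and $\pi_1,\pi_2$ are the $3$D partitions at $p,q$. The one subtlety I would be careful about is the asymptotics at $q$: because $X$ is built as $\Bl_\Delta(S\times_{\PP^1}S)$, going from $p$ to $q$ along the banana curves swaps the two rulings, which has the effect of transposing each transverse partition, so $\pi_2$ must be asymptotic to $(R_1',R_2',R_3')$ rather than $(R_1,R_2,R_3)$. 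This is exactly the ``banana'' geometry and is where I expect to have to argue most carefully — tracking how the local coordinate systems at $p$ and $q$ are glued along the three curves to see the conjugation appear.

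Once the bijection is set up, the discrete invariants follow from standard Euler-characteristic bookkeeping for monomial ideals, essentially as in \cite{MNOP1}, \cite{Bryan-Kool-Young}. For the curve classes: $d_i(P)$ is the multiplicity of $Z$ along $C_i$, which is $|R_i|$, the number of boxes in the transverse profile; this is immediate. For $\chi(\OO_{Z_P})$ I would compute $\chi$ of the structure sheaf of the monomial subscheme by the usual method of adding up contributions: the two vertex contributions at $p,q$ give the normalized volumes $|\pi_1|$ and $|\pi_2|$, and each edge $C_i$ — a $\PP^1$ with normal bundle $\OO(-1)\oplus\OO(-1)$, thickened by the partition $R_i$ — contributes, by the edge term in the topological-vertex formalism, $\half(\|R_i\|^2 + \|R_i'\|^2)$ (the degree-zero part of the Euler characteristic of the thickened $\PP^1$, using that the normal bundle is $(-1,-1)$ and that the framing swaps $R_i\leftrightarrow R_i'$ between the two ends). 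Summing over $i=1,2,3$ and the two vertices gives the stated formula. One must be slightly careful that there is no extra global correction term (e.g. from $\chi(\OO_{\PP^1})$ factors), which I would check by testing on the simplest cases: $R_i$ a single box with $\pi_1=\pi_2=\emptyset$ should give a single $\PP^1$ with $\chi(\OO)=1$ and indeed $\half(1+1)=1$; and $Z$ a single reduced point should give $\chi=1$.

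For the Behrend function value, I would invoke the result of Behrend--Fantechi \cite{Behrend-Fantechi08} already quoted in the text: at an isolated $\CC^{*}\times\CC^{*}$-fixed point $P$ compatible with the symmetric obstruction theory, $\nu(P) = (-1)^{\dim T_P}$ where $T_P = \Ext^1_0(I_{Z_P},I_{Z_P})$. The claim $(-1)^{\dim T_P} = (-1)^{\chi(\OO_{Z_P}) + |R_1|+|R_2|+|R_3|}$ then amounts to computing $\dim T_P \bmod 2$. I would do this via the standard Serre-duality / virtual-dimension argument: for a symmetric obstruction theory $\Ext^1_0 - \Ext^2_0 = \Ext^1_0 - \Ext^1_0{}^{\vee}$ has virtual dimension zero, and the parity of $\dim\Ext^1_0$ is governed by the parity of $\dim\Ext^2_0 - \dim\Ext^1_0$ only through the torus-fixed part; more concretely, one computes the $\CC^{*}\times\CC^{*}$-character of $\Ext^1_0(I_Z,I_Z)$ from the character of $I_Z$ using the MNOP-style formula and extracts the trivial-weight part, whose dimension mod $2$ can be read off from the combinatorics. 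In fact the cleanest route is to note that $\chi(\OO_{Z_P})$ and $\dim T_P$ have the same parity up to the correction coming from the curve classes, which is a known phenomenon — the sign $(-1)^{\dim T_P}$ differs from $(-1)^{n}$ by $(-1)^{\beta\cdot(\text{something})}$, and here the relevant self-intersection bookkeeping collapses to $d_1+d_2+d_3 = |R_1|+|R_2|+|R_3| \bmod 2$. I expect this parity computation to be the main obstacle: it requires either carefully assembling the local tangent-space characters at the two vertices and three edges into a global character and pulling out the fixed part, or else a slick duality argument; both are somewhat delicate and I would allocate most of the proof's length here, checking the result against the explicit low-degree terms of the partition function stated in Theorem~\ref{thm: formula for Zban}.
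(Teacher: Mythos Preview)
Your approach is essentially correct and parallels the paper's, but the paper organizes the argument through one structural device you do not use: an explicit \'etale normalization. Rather than working directly in formal charts near $p$ and $q$ and gluing by hand, the paper proves (Proposition~\ref{prop: toric description of FhatBan}) that there is a $\CC^{*}\times\CC^{*}$-equivariant \'etale morphism $\hat\tau:\Fhat^{\norm}_{\ban}\to\Fhat_{\ban}$, where $\Fhat^{\norm}_{\ban}$ is the formal neighborhood of $\Bl_{(0,0),(\infty,\infty)}(\PP^{1}\times\PP^{1})$ inside the total space of its own canonical bundle. This puts you inside an honest toric Calabi-Yau threefold $K_{\Bl}$ whose web diagram is a hexagon with six non-compact legs; the \'etale relation identifies the six vertices to the two points $p,q$ and the six compact edges to the three banana curves, which is exactly where the conjugation $(R_{1}',R_{2}',R_{3}')$ at the second vertex comes from. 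Your ``swapping the rulings'' heuristic is the right intuition, but the hexagon picture makes it precise without chasing coordinates.

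The payoff of the normalization is in the part you flagged as the main obstacle. Once you are on $K_{\Bl}$, the formulas for $d_{i}(P)$, $\chi(\OO_{Z_{P}})$, and the parity $(-1)^{\dim T_{P}}$ are literally the MNOP formulas \cite[\S4.4, Lemma~5, Thm~2]{MNOP1}; the paper observes that MNOP's proofs are \v{C}ech-local and go through unchanged with the cover obtained by intersecting the toric charts of $K_{\Bl}$ with the formal neighborhood and identifying via $\hat\tau$. So there is no need to reassemble a global tangent-space character or run a separate duality argument for the sign---you can cite MNOP directly. Your plan would work, but you would be reproving a special case of \cite[Thm~2]{MNOP1} rather than invoking it.
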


This proposition will be proved in the next section, but we first use
it to finish the computation $Z(\Fhat_{\ban})$. First recall that
the topological vertex is defined to be the generating function
\[
\Vsf_{R_{1}R_{2}R_{3}}(p) = \sum_{\pi} p^{|\pi |}
\]
where the sum is over all 3D-partitions asymptotic to
$(R_{1},R_{2},R_{3})$ \cite[Def.~2]{Bryan-Kool-Young},
\cite{Ok-Re-Va}. Then by the proposition  we obtain
\begin{align*}
Z(\Fhat_{\ban}) =& \sum_{R_{1}R_{2}R_{3}} (-Q_{1})^{|R_{1}|}
(-Q_{2})^{|R_{2}|} (-Q_{3})^{|R_{3}|} p^{\half \sum_{i=1}^{3}
||R_{i}||^{2}+ ||R'_{i}||^{2}} \Vsf_{R_{1}R_{2}R_{3}}(p)  \Vsf_{R'_{1}R'_{2}R'_{3}}(p) \\
 = & M(p)^{2}  \sum_{R_{1}R_{2}R_{3}} (-Q_{1})^{|R_{1}|}
(-Q_{2})^{|R_{2}|} (-Q_{3})^{|R_{3}|}  \Vtildesf_{R_{1}R_{2}R_{3}}(p)  \Vtildesf_{R'_{1}R'_{2}R'_{3}}(p)
\end{align*}
where 
\[
\Vtildesf_{R_{1}R_{2}R_{3}} = M(p)^{-1} p^{\half (
||R_{1}||^{2}+||R'_{2}||^{2}+ ||R_{3}||^{2})} V_{R_{1}R_{2}R_{3}}
\]
is the normalized vertex. The main result of \S~\ref{sec: vertex
calculation.}  (Theorem~\ref{thm: vertex computation}) then asserts:

\[
 Z(\Fhat_{\ban})    = \prod_{ a_{i}\geq 0} \prod_{\begin{smallmatrix}k\in \ZZ ,\\
k>0\text{ if }\avec=
\zerovec \end{smallmatrix}}
\left(1-Q_{1}^{a_{1}}Q_{2}^{a_{2}}Q_{3}^{a_{3}}p^{k} \right)^{-c(||\avec||,k)}.
\]

This formula, along with equation~\eqref{eqn: Z = (Z(Fsing))^12} then
proves Theorem~\ref{thm: formula for Zban}. 

\subsection{Analysis of $\CC^{*}\times \CC^{*}$-fixed
subschemes.}\label{subsec: analysis of fixed subschemes}

The goal of this section is to prove Propostion~\ref{prop: fixed
points of Hilb are given by R1,R2,R3,pi1,pi2}. 

To analyze the action of $\CC^{*}\times \CC^{*}$ on $\Fhat_{\ban}$ and
determine the $\CC^{*}\times \CC^{*}$ invariant subschemes, we obtain
an explicit toric description of $\Fhat_{\ban}$ using the following
proposition. Informally, the proposition says that
\begin{enumerate}
\item $F_{\ban}$ is obtained from $\Bl (\PP^{1}\times \PP^{1})$, the
blow up of $\PP^{1}\times \PP^{1}$ at the points $(0,0),(\infty ,
\infty )$, by gluing the three pairs of disjoint boundary divisors to
each other in normal crossings. 
\item $\Fhat_{\ban}$ is obtained from $\widehat{\Bl (\PP^{1}\times
\PP^{1})}$, the formal neighborhood of $\Bl (\PP^{1}\times \PP^{1})$
inside the total space of its canonical bundle, by an etal\'e gluing
(which restricts to the normal crossing gluing of $\Bl (\PP^{1}\times
\PP^{1})$).
\item The gluing is $\CC^{*}\times \CC^{*}$ equivariant.
\end{enumerate}

Let $F_{\sing}\subset X_{\sing}$ be the image of $F_{\ban}$ under the
blowup $X_{\ban}\to X_{\sing}$. Let 
\[
\sigma :F^{\norm}_{\sing}\to F_{\sing},\quad \tau :F^{\norm}_{\ban}\to
F_{\ban}
\]
be the normalizations. Define $\Fhat^{\norm}_{\ban}$ to be the formal
scheme given by the ringed space
$(F^{\norm}_{\ban},\tau^{*}\widehat{\OO}_{\Fhat_{\ban}}) $.
\begin{proposition}\label{prop: toric description of FhatBan}
 Then the following hold
\begin{enumerate}
\item $F^{\norm}_{\sing}\cong \PP^{1}\times \PP^{1}.$
\item $F^{\norm}_{\ban}\cong \Bl(\PP^{1}\times \PP^{1})$, the
blowup of $F^{\norm}_{\sing}$ at the two points $(0,0)$ and $(\infty
,\infty )$ (c.f. \cite[Fig~4]{Abouzaid-Auroux-Katzarkov}). 
\item $\Fhat^{\norm}_{\ban}\cong \widehat{\Bl( \PP^{1}\times
\PP^{1})}$, the formal neighborhood of $\Bl( \PP^{1}\times \PP^{1})$
viewed as the zero section in $\Tot (K_{ \Bl (\PP^{1}\times \PP^{1})})$,
the total space of its canonical bundle.
\item The induced map
$\Fhat^{\norm}_{\ban} \xrightarrow{\widetilde{\tau}} \Fhat_{\ban}$ is
etal\'e.  
\item All of the above maps are $\CC^{*}\times \CC^{*}$ equivariant.  
\end{enumerate}
\end{proposition}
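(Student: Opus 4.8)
\textbf{Proof proposal for Proposition~\ref{prop: toric description of FhatBan}.}

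The plan is to build the identification from the bottom up, starting with the normalization of the singular fiber $F_{\sing}\subset X_{\sing}$ and then tracking how the conifold resolution $X_{\ban}\to X_{\sing}$ interacts with it. First I would analyze $S\times_{\PP^1}S$ near a point $x_i\in\PP^1$ with singular fiber. Since $S\to\PP^1$ is a generic rational elliptic surface, each of its $12$ singular fibers is a nodal cubic, i.e. $F_{x_i}(S)\cong$ a rational curve with one node, which after normalization is $\PP^1$ with $0$ and $\infty$ glued. Hence the fiber of $S\times_{\PP^1}S\to\PP^1$ over $x_i$ is (nodal cubic)$\times$(nodal cubic), whose normalization is $\PP^1\times\PP^1$; this gives item~(1) once we check that the normalization of $F_{\sing}$ is unaffected by passing from $S\times_{\PP^1}S$ to $X_{\sing}$ near this fiber (they agree, since $X_{\sing}$ is just $S\times_{\PP^1}S$ here — the blowup happens only along $\Delta$ when resolving). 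The self-gluing pattern on $F_{\sing}^{\norm}=\PP^1\times\PP^1$ is then read off from the two nodes: the boundary $\{0,\infty\}\times\PP^1$ and $\PP^1\times\{0,\infty\}$ get glued in pairs $\{0\}\times\PP^1\sim\{\infty\}\times\PP^1$ and $\PP^1\times\{0\}\sim\PP^1\times\{\infty\}$ (up to the monodromy-induced shift, which for a generic RES acts trivially on the relevant components).

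Next I would pass to $F_{\ban}$. The diagonal $\Delta\subset S\times_{\PP^1}S$ meets the fiber over $x_i$ in the diagonal of (nodal cubic)$\times$(nodal cubic), which passes through both nodes; upstairs in $\PP^1\times\PP^1$ this is the preimage of the diagonal, and the two conifold points of $S\times_{\PP^1}S$ on this fiber are exactly the images of $(0,0)$ and $(\infty,\infty)$. The small (conifold) resolution $X_{\ban}=\Bl_\Delta(S\times_{\PP^1}S)\to S\times_{\PP^1}S$ replaces each conifold point by a $\PP^1$; on the normalization of the fiber this corresponds precisely to blowing up $\PP^1\times\PP^1$ at $(0,0)$ and $(\infty,\infty)$, giving item~(2). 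Here I would be careful to argue that blowing up $\Delta$ (a divisor through the singular locus) induces, on the fiber normalization, the ordinary blowup of the two points — this is a local computation at a conifold singularity $\{xy=zw\}$ where $\Delta$ corresponds to one of the Weil divisors $\{x=z=0\}$ and $\Bl_{\Delta}$ is the standard small resolution. For item~(3): $X_{\ban}$ is Calabi-Yau, so $K_X$ is trivial; restricting the adjunction/conormal data to $\Fhat_{\ban}$, the formal neighborhood of $F_{\ban}$ is modeled, after normalization, on the formal neighborhood of $\Bl(\PP^1\times\PP^1)$ inside the total space of its canonical bundle. Concretely, $N_{F^{\norm}_{\ban}/\text{(model)}}$ should be identified with $K_{\Bl(\PP^1\times\PP^1)}$; this follows because the banana curves $C_i$ have normal bundle $\OO(-1)\oplus\OO(-1)$ in $X$ (Definition~\ref{defn: banana configuration}), which pins down the normal bundle of the exceptional curves of $\Bl(\PP^1\times\PP^1)$ inside the threefold, and the Calabi-Yau condition forces the total normal data to be $K_{\Bl(\PP^1\times\PP^1)}$ — a surface in a CY3 has normal bundle its canonical bundle.

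Item~(4), that $\widetilde\tau:\Fhat^{\norm}_{\ban}\to\Fhat_{\ban}$ is \'etale, is where I expect the main difficulty. The normalization map $\tau:F^{\norm}_{\ban}\to F_{\ban}$ is a $2:1$ gluing along the three pairs of boundary $\PP^1$'s and is certainly not \'etale as a map of surfaces; the claim is that once one thickens to the formal neighborhood inside the threefold, the gluing becomes \'etale. The point is that near a double curve the threefold $X$ looks formally like two coordinate hyperplanes in $\CC^3$ — or rather, near a point $p$ or $q$ of the banana configuration, like the three coordinate hyperplanes — and the two local branches of $F_{\ban}$ at a double curve, together with the transverse $K_X$-direction, assemble so that $\Fhat_{\ban}$ is formally a normal-crossings-type gluing in which each branch (thickened) maps isomorphically onto a formal neighborhood. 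I would make this precise by giving \'etale-local coordinates: away from $\{p,q\}$, along a double curve $X$ is formally $\Spec \CC[x,y,z]/(xy)$ (two components meeting transversally) times the curve, and $\tau$ separates the two components $\{x=0\}$ and $\{y=0\}$; restricting to either formal component and remembering the ambient CY structure gives an isomorphism onto the corresponding formal piece of $\Fhat_{\ban}$, hence \'etaleness of the disjoint union. At $p$ and $q$ the local model is $\Spec\CC[x,y,z]/(xyz)$ and the three surface branches are the three coordinate planes; $\Fhat^{\norm}_{\ban}$ near the preimage is the disjoint union of thickenings of these planes, each mapping isomorphically, so again \'etale. The genuinely delicate verification is matching the transverse (canonical-bundle) directions across the gluing and checking the cocycle condition — i.e. that the three local \'etale charts glue consistently with the $\CC^*\times\CC^*$ action, which is the content of item~(5). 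For item~(5): the torus $\CC^*\times\CC^*$ acting on $\Fhat_{\ban}$ is (by Lemma~\ref{lem: C*xC* acts on Hilb(Fhatsing)}, via the Mordell-Weil splitting) the torus $F^\circ_{x_i}\cong\CC^*\times\CC^*$ sitting inside the fiber; its action lifts the standard toric $\CC^*\times\CC^*$-action on $\PP^1\times\PP^1$ (coordinatewise scaling fixing $0,\infty$), commutes with blowing up the fixed points $(0,0),(\infty,\infty)$, and extends to $\Tot(K)$ via the induced action on the canonical bundle. Since all the gluing maps were constructed from torus-equivariant data (the nodes and the diagonal are torus-fixed loci), every map in (1)--(4) is automatically equivariant; I would simply note this at each step rather than re-proving it. The upshot is a completely explicit toric/combinatorial model of $\Fhat_{\ban}$, which is what the analysis of fixed subschemes in Proposition~\ref{prop: fixed points of Hilb are given by R1,R2,R3,pi1,pi2} will use.
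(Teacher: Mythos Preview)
Your outline follows the same bottom-up route as the paper --- normalize the nodal-cubic-squared fiber, then track the small resolution --- and items (1), (2), and (5) are handled essentially as the paper does. One small slip: there is only \emph{one} conifold point per singular fiber of $S\times_{\PP^{1}}S$, namely $(n,\othermark{n})$; its preimage in $\PP^{1}\times\PP^{1}$ consists of the four points $(0,0),(0,\infty),(\infty,0),(\infty,\infty)$, and it is the two of these lying on the preimage of the diagonal, $(0,0)$ and $(\infty,\infty)$, that get blown up. Your conclusion for (2) is right, but the phrase ``the two conifold points on this fiber'' is not.

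The real divergence from the paper is in (3) and (4). The paper does not argue abstractly at all: it writes down formal local coordinates $(x,y,\othermark{x},\othermark{y})$ near the conifold with $xy=\othermark{x}\othermark{y}$, blows up explicitly to obtain two toric charts $\CC[[y,\othermark{y}]][u]$ and $\CC[[x,\othermark{x}]][v]$ with $u=v^{-1}$, and then reads off the momentum polytope of $\Fhat^{\norm}_{\ban}$ as the hexagon with its six $\CC^{3}$ charts. Items (3), (4), (5) then fall out by inspection: each of the six hexagon vertices carries the same formal coordinate ring as one of the two banana vertices $p,q$, so $\widehat\tau$ is a local isomorphism chart-by-chart (hence \'etale), and the resulting toric description is visibly that of the zero section in $\Tot(K_{\Bl(\PP^{1}\times\PP^{1})})$.

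Your abstract argument for (3) via adjunction has a genuine gap. The statement ``a surface in a CY3 has normal bundle its canonical bundle'' applies to an embedded divisor, but $F^{\norm}_{\ban}$ is not a subscheme of $X_{\ban}$; the object $\Fhat^{\norm}_{\ban}$ is \emph{defined} as the ringed space $(F^{\norm}_{\ban},\tau^{*}\Ohat_{\Fhat_{\ban}})$, and it is not a priori the formal neighborhood of anything inside anything. To invoke adjunction you would first need (4), so that $\Fhat^{\norm}_{\ban}$ is a smooth formal threefold with trivial canonical class containing $F^{\norm}_{\ban}$ as a Cartier divisor --- but then (3) and (4) become entangled rather than proved separately. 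Even granting the normal bundle identification, knowing $N\cong K_{\Bl}$ as a line bundle does not by itself determine the formal neighborhood to all orders; one needs either a splitting argument or, as the paper does, an explicit coordinate identification. Your local models for (4) are the correct intuition (three vertices of the hexagon map to each of $p,q$, each by a coordinate isomorphism), but making this precise requires exactly the explicit chart computation the paper carries out. The paper's approach buys you (3), (4), and (5) simultaneously and without circularity; your approach would work if supplemented by those coordinates, but as written it leaves (3) unjustified.
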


These results are summarized in the following diagram:

\[
\hspace*{-2cm}
\begin{tikzcd}[column sep=1.2cm]
&[-17] \Fhat_{\ban}
\arrow[dl,hook, "
\overset{\text{\tiny formal}}{\text{\tiny neighborhood}}
"']
& \Fhat^{\norm}_{\ban}
\arrow[l,"\widehat{\tau}"',"\text{ etal\'e}"]
&[-25]\widehat{\Bl( \PP^{1}\times \PP^{1})}
\arrow[l,phantom,"\cong " description]
\arrow[dr,hook,"
\overset{\text{\tiny formal}}{\text{\tiny neighborhood}}
"]\\ 
X_{\ban}
\arrow[d]
&[-17]F_{\ban}
\arrow[d]
\arrow[u,hook]
\arrow[l,hook]
&F^{\norm}_{\ban}
\arrow[l,"{\tau}"',"\text{\tiny normalization}"]
\arrow[d]\arrow[u,hook]
&[-25]\Bl (\PP^{1}\times \PP^{1})
\arrow[u,hook]
\arrow[l,phantom,"\cong " description]
\arrow[d,"\text{\tiny Blow up $(0,0)$ and $(\infty ,\infty )$}"]
\arrow[r,hook]
&[-23]\Tot(K_{\Bl (\PP^{1}\times \PP^{1})})\\
X_{\sing}
&[-17]F_{\sing}
\arrow[l,hook]
&F^{\norm}_{\sing}
\arrow[l,"\sigma "',"\text{\tiny normalization}"]
&[-25]\PP^{1}\times \PP^{1}
\arrow[l,phantom,"\cong " description]
&
\end{tikzcd}
\]

\begin{proof}
The proof of this proposition is primarily based on computations in
formal local coordinates. We give the details in Section~\ref{subsec:
proof of prop about FhatBan}.
\end{proof}

Using the equivariant etal\'e morphism
\[
\hat{\tau} :\Fhat_{\ban}^{\norm}\to \Fhat_{\ban}
\]
provided by the lemma, we may study $\CC^{*}\times \CC^{*}$-invariant
subschemes of $\Fhat_{\ban}$ by studying $\CC^{*}\times
\CC^{*}$-invariant of $\Fhat^{\norm}_{\ban}$ satisfying the decent
condition. Since the only proper, invariant subschemes of 
\[
K_{\Bl} := \Tot \left(K_{\Bl (\PP^{1}\times \PP^{1})} \right)
\]
are supported on $\Fhat^{\norm}_{\ban}\subset K_{\Bl}$, we may
consider torus invariant subschemes of the toric threefold $K_{\Bl}$
and use the method of MNOP \cite{MNOP1} to count such subschemes ---
subject to the condition that such subschemes descend under the
etal\'e relation given by $\hat{\tau}$.

A torus invariant subscheme of a toric Calabi-Yau threefold is
determined by combinatorial data attached to its web diagram, a
trivalent planar graph (which includes non-compact edges). Namely,
each edge is labelled by a 2D partition and each vertex is labelled by
a 3D partition which is asymptotic to the three 2D partitions given by
the incident edges (see \cite{MNOP1} or also \cite[\S 3 and \S
B]{Bryan-Cadman-Young}). The web diagram of $K_{\Bl}$ is a hexagon
with 6 additional non-compact edges (the graph on the right in
Figure~\ref{Fig: momentum polytopes of Fsing and Fban}). The etal\'e
relation identifies the six vertices to two distinct vertices
corresponding to the points $\{p,q \}$ in the banana
configuration. The relation also identifies the edges to three
distinct edges corresponding to the curves $C_{1},C_{2},C_{3}$ in the
banana configuration.  Each $\CC^{*}\times \CC^{*}$ invariant
subscheme is thus determined by three 2D partitions
$R_{1},R_{2},R_{3}$ and two 3D partitions $\pi_{1},\pi_{2}$.  The
quantaties $d_{i}(P)$, $\chi (\OO_{Z_{P}})$, and $(-1)^{\dim T_{P}}$
are computed by MNOP in terms of the combinatorial data associated to
the fixed point $P$ for toric Calabi-Yau threefolds. Their computation
applies in our setting (of a formal Calabi-Yau threefold) as
well. Their proofs are based on computing with the Cech open cover
given by the $\CC ^{3}$ coordinate charts, but we can equally well use
the Cech cover obtained by
intersecting the toric open cover on $K_{\Bl}$ with the formal
neighborhood of the banana configuration, and then identifying via the
etal\'e relation. The proof that fixed points are isolated \cite[Lemma
6]{MNOP1}, the computation of the degree and Euler characteristic
\cite[\S 4.4]{MNOP1}, and the computation of the parity of the
dimensional of the tangent space \cite[Thm~2]{MNOP1} all work with
this more general Cech cover. 

Applying the formulas of MNOP is then straightforward. The formulas in
our Proposition~\ref{prop: fixed points of Hilb are given by
R1,R2,R3,pi1,pi2} follow directly from the formulas in
\cite[\S4.4]{MNOP1} (in particular Lemma 5), \cite[Theorem~2]{MNOP1},
and the fact that
\[
\sum_{(i,j)\in R} (i+j+1)\\= \half (||R||^{2} + ||R'||^{2}). 
\]

This completes the proof of Proposition~\ref{prop: fixed points of
Hilb are given by R1,R2,R3,pi1,pi2}.

\section{The vertex calculation}\label{sec: vertex calculation.}

Let
\[
\VertexTilde_{R_{1}R_{2}R_{3}}(p) = s_{R'_{3}}(p^{-\rho})\sum_{A}
s_{R_{1}'/A}(p^{-R_{3}-\rho}) s_{R_{2}/A}(p^{-R'_{3}-\rho})
\]
be the normalized vertex and let (see section~\ref{subsec: reduction to C*-fixed subschemes}) 
\[
Z(\Fhat_{\ban }) = M(p)^{2} \sum_{R_{1},R_{2},R_{3}} (-Q_{1})^{R_{1}}  (-Q_{2})^{R_{2}}
(-Q_{3})^{R_{3}} \VertexTilde_{R_{1}R_{2}R_{3}}(p)
\VertexTilde_{R'_{1}R'_{2}R'_{3}}(p)
\]
where $M(p) = \prod_{m=1}^{\infty}(1-p^{m})^{-m}$.

The purpose of this section is to prove the following.

\begin{theorem}\label{thm: vertex computation} 
\[
Z(\Fhat_{\ban})    = \prod_{ a_{i}\geq 0} \prod_{\begin{smallmatrix}k\in \ZZ ,\\
k>0\text{ if }\avec=
\zerovec \end{smallmatrix}}
\left(1-Q_{1}^{a_{1}}Q_{2}^{a_{2}}Q_{3}^{a_{3}}p^{k} \right)^{-c(||\avec||,k)}
\]
where $c(a,k)$ is given by Proposition~\ref{prop: generating function for c(a,k) =
c(4n-l^2,k) =c(n,l,k)}.
\end{theorem}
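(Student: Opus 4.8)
The plan is to evaluate the two ``outer'' partition sums in $Z(\Fhat_{\ban})$ in closed form, reducing it to a single sum over the third partition, which is then recognized as a generating function for the equivariant elliptic genera of $\Hilb(\CC^{2})$; Waelder's product formula (Theorem~\ref{thm: DMVV formula}) together with Proposition~\ref{prop: generating function for c(a,k) = c(4n-l^2,k) =c(n,l,k)} then delivers the stated product. This is a mathematically rigorous rendering of the geometric-engineering computation of Hollowood--Iqbal--Vafa \cite{Hollowood-Iqbal-Vafa}.

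First I would expand $\VertexTilde_{R_{1}R_{2}R_{3}}(p)\,\VertexTilde_{R'_{1}R'_{2}R'_{3}}(p)$ and observe, using $(R'_{3})'=R_{3}$, that $R_{1}$ occurs only in the skew Schur factors $s_{R'_{1}/A}(p^{-R_{3}-\rho})$ and $s_{R_{1}/B}(p^{-R'_{3}-\rho})$, while $R_{2}$ occurs only in $s_{R_{2}/A}(p^{-R'_{3}-\rho})$ and $s_{R'_{2}/B}(p^{-R_{3}-\rho})$. Summing over $R_{1}$ with weight $(-Q_{1})^{|R_{1}|}$ and over $R_{2}$ with weight $(-Q_{2})^{|R_{2}|}$, I would apply the weighted skew dual-Cauchy identity $\sum_{\lambda} s_{\lambda/A}(x)\,s_{\lambda'/B}(y)\,z^{|\lambda|}=\bigl(\prod_{i,j}(1+z\,x_{i}y_{j})\bigr)\sum_{\tau}s_{B'/\tau}(\cdots)\,s_{A'/\tau'}(\cdots)$ twice. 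This produces the two infinite products $\prod_{i,j}(1+Q_{a}\,x_{i}y_{j})$ for $a=1,2$, with $(x_{i})=p^{-R_{3}-\rho}$ and $(y_{j})=p^{-R'_{3}-\rho}$, together with residual sums over auxiliary partitions which are then collapsed against the surviving factors $s_{R'_{3}}(p^{-\rho})$, $s_{R_{3}}(p^{-\rho})$ and the inner sums $\sum_{A}$, $\sum_{B}$ by further applications of the (ordinary and dual) Cauchy identities. The classical hook-length identity rewriting $\prod_{i,j}(1+c\,x_{i}y_{j})$ as a finite product over the boxes of $R_{3}$ times an $R_{3}$-independent factor then puts $Z(\Fhat_{\ban})$ into the form
\[
Z(\Fhat_{\ban}) = \Bigl(\textstyle\prod_{n}(\cdots)\Bigr)\cdot\sum_{R_{3}}(-Q_{3})^{|R_{3}|}\prod_{\square\in R_{3}}G\bigl(h(\square)\bigr),
\]
where $G(h)$ is an explicit ratio of $\theta_{1}$-type finite products in $Q_{1},Q_{2},p^{\pm h}$ and the prefactor is independent of $R_{3}$.

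The key step is to match the $R_{3}$-sum with an elliptic genus. Under the change of variables $q=Q_{1}Q_{3}$, $y=Q_{3}^{-1}$, $t=p$, $\widehat{Q}=Q_{2}Q_{3}$, arranged so that $t^{k}q^{n}y^{l}\widehat{Q}^{m}=p^{k}Q_{1}^{n}Q_{2}^{m}Q_{3}^{\,n+m-l}$, the box factor $G(h(\square))$ becomes $\theta_{1}(q,yt^{-h})\,\theta_{1}(q,yt^{h})/\bigl(\theta_{1}(q,t^{-h})\,\theta_{1}(q,t^{h})\bigr)$, so the $R_{3}$-sum equals $\sum_{m}\Ell_{q,y}(\Hilb^{m}(\CC^{2}),t)\,\widehat{Q}^{m}$ by equation~\eqref{eqn: Ell(Hill(C2)) as a sum over partitions}. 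Invoking Theorem~\ref{thm: DMVV formula} turns this into $\prod_{m\geq1}\prod_{n\geq0}\prod_{l,k}\bigl(1-t^{k}q^{n}y^{l}\widehat{Q}^{m}\bigr)^{-c(nm,l,k)}$. Re-indexing by $(d_{1},d_{2},d_{3})=(n,m,n+m-l)$ and applying Proposition~\ref{prop: generating function for c(a,k) = c(4n-l^2,k) =c(n,l,k)} --- which gives $c(nm,l,k)=c(4nm-l^{2},k)=c(||\dvec||,k)$, since $4nm-l^{2}=4d_{1}d_{2}-(d_{1}+d_{2}-d_{3})^{2}=||\dvec||$, and also forces the terms with $d_{3}<0$ to vanish --- yields all the factors with $d_{2}\geq1$. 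The remaining $d_{2}=0$ factors, including $M(p)^{2}=\prod_{k>0}(1-p^{k})^{-2k}$ coming from $c(0,k)=2k$ (Corollary~\ref{cor: c(-1,k) and c(0,k)}), must be supplied by the $R_{3}$-independent prefactor together with the explicit $M(p)^{2}$ in the definition of $Z(\Fhat_{\ban})$; checking this bookkeeping, and that the exceptional $\avec=\zerovec$ term indeed carries the restriction $k>0$, completes the identification with the claimed product.

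I expect the main obstacle to be the first stage: carrying out the $R_{1}$ and $R_{2}$ sums while keeping exact track of signs and of which alphabet ($p^{-R_{3}-\rho}$ or $p^{-R'_{3}-\rho}$) each skew Schur function lives in, so that the auxiliary sums created by the Cauchy identities genuinely cancel and the answer collapses to a single clean $R_{3}$-sum with a hook-product integrand. A subsidiary technical point is establishing and applying, in the normalized vertex conventions used here, the hook-length identity that converts the doubly infinite products $\prod_{i,j}(1+c\,x_{i}y_{j})$ into a finite box-product times an $R_{3}$-independent correction.
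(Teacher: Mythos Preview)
Your overall architecture is the paper's: sum out $R_{1}$ and $R_{2}$ via skew Cauchy identities and a hook-length lemma, reducing $Z(\Fhat_{\ban})$ to a prefactor times a single sum over $R_{3}$ whose summand is a product over boxes of $R_{3}$; recognize that sum as $\sum_{m}\Ell_{q,y}(\Hilb^{m}(\CC^{2}),t)\,Q^{m}$ via equation~\eqref{eqn: Ell(Hill(C2)) as a sum over partitions}; invoke Waelder's Theorem~\ref{thm: DMVV formula} and Proposition~\ref{prop: generating function for c(a,k) = c(4n-l^2,k) =c(n,l,k)}; and finally match the prefactor with the missing factors of the product. One remark on the first stage: the paper does not apply dual Cauchy twice. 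It applies it once to the $R_{1}$-sum, then uses the skew addition rule $\sum_{\nu}s_{\lambda/\nu}(x)s_{\nu/\mu}(y)=s_{\lambda/\mu}(x,y)$ together with the involution identity~\eqref{eqn: symmetric fnc involution identity } to collapse the auxiliary $A$- and $B$-sums into single skew Schurs in enlarged alphabets, and closes the remaining $R_{2},C$ sum with \cite[p.~94(b)]{Macdonald}. Your ``dual Cauchy twice and collapse'' would leave four nested auxiliary partitions; it can likely be pushed through but is less clean.

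There is, however, a concrete error in your change of variables which, as written, blocks the key identification. After summing out $R_{1},R_{2}$ with weights $Q_{1},Q_{2}$, the box factor $G(h)$ and the $R_{3}$-independent prefactor depend only on $Q_{1},Q_{2},p$; there is no way for $Q_{3}$ to enter either. Your substitution $q=Q_{1}Q_{3}$, $y=Q_{3}^{-1}$ forces the theta ratio $\theta_{1}(q,yt^{\pm h})/\theta_{1}(q,t^{\pm h})$ to depend on $Q_{3}$, so it cannot equal $G(h)$; and your claim that the prefactor supplies the $d_{2}=0$ factors is impossible, since the prefactor carries no $Q_{3}$ and hence only contributes $d_{3}=0$ terms. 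The correct substitution (the paper's) is $y=Q_{1}$, $q=Q_{1}Q_{2}$, $Q=Q_{1}Q_{3}$, $t=p$: then the theta ratio lives in $Q_{1},Q_{2},p$ as required, the Hilbert-scheme variable $Q$ absorbs the $R_{3}$-weight, the DMVV product supplies exactly the $d_{3}\geq 1$ factors under the reindexing $(d_{1},d_{2},d_{3})=(n+l+m,\,n,\,m)$, and $M(p)^{2}\cdot Z_{\inst}$ supplies the $d_{3}=0$ factors via Corollary~\ref{cor: c(-1,k) and c(0,k)}. With this correction your plan goes through and coincides with the paper's derivation.
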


Most of this computation has previously appeared in the physics
literature under the guise of geometric engineering (in this case a
duality between a certain six dimensional $U(1)$ gauge theory and a
certain topological string theory). The main reference is
Hollowood-Iqbal-Vafa \cite{Hollowood-Iqbal-Vafa}. This calculation was
also studied in \cite{Li-Liu-Zhou}. These computations assumed an
equality between the generating function for the equivariant elliptic
genera of $\Hilb^{n}(\CC^{2})$, and the orbifold
equivariant elliptic genera of $\Sym^{n}(\CC^{2})$ which they call the
DMVV conjecture (after \cite{DMVV}). This is an
instance of the crepant resolution conjecture for elliptic genera,
proven in the compact case by Borisov-Libgober \cite{Borisov-Libgober}
and the equivariant case by Waelder \cite{Waelder}.

We give the derivation here in full detail. We have filled in some
minor details that are missing from the previous accounts and have
collected all the needed results in one place. 

\subsection{Overview of computation}

After collecting some standard Schur function identities and proving a
few new ones in the next subsection, we proceed to the main
computation. The basic structure is as follows. 
\begin{enumerate}
\item Writing 
\[
Z'_{\ban}=M(p)^{-2}Z(\Fhat_{\ban}) = \sum_{R} (-Q_{3})^{|R|} Z_{R}(Q_{1},Q_{2},p)
\]
we use a series of Schur function identities to simplify $Z_{R}$ and
write it as a product 
\[
Z_{R} = Z_{\inst}\cdot Z_{\hook,R}
\]
where $Z_{\inst}$ is a product of terms which do not depend on $R$ and
$Z_{\hook ,R}$ is also a product of terms which depend on
the hook-lengths of $R$.
\item We observe that after the change of variables
\begin{equation}\label{eqn: variable change Qs to q,y,etc}
Q_{1} = y, \quad Q_{2} = y^{-1}q,\quad Q_{3} = y^{-1}Q ,\quad p=t
\end{equation}
the product $Z_{\hook ,R}$ is exactly the contribution of  a
$\CC^{*}$-fixed point to the computation of
\[
\sum_{n=0}^{\infty}\Ell_{q,y}(\Hilb (\CC^{2}),t)\, Q^{n},
\]
the elliptic genera of the Hilbert schemes, via Atiyah-Bott
localization. Here $\lambda \in \CC^{*}$ acts on the factors of
$\CC^{2}$ with opposite weights and $t$ is the equivariant parameter.
\item An product formula for $\sum_{n=0}^{\infty}\Ell_{q,y}(\Hilb
(\CC^{2}),t)\, Q^{n}$ was conjectured by Dijkgraaf-Moore-Verlinde-Verlinde
\cite{DMVV}, and proven by Borisov-Libgobner and Waelder
\cite{Borisov-Libgober, Waelder}. Using that formula, substituting
back to the $Q_{1},Q_{2},Q_{3},p$ variables, and performing a few easy
manipulations, we arrive at Theorem~\ref{thm: formula for Zban}.
\end{enumerate}

\subsection{Notation and Schur function
identities}\label{subsec: Notation and schur function identities} 

We will use capital letters $R,A,B,C$, etc. to denote partitions. Via
its diagram, we
regard a partition $A$ as a finite subset of $\NN \times  \NN$ where if
$(i,j)\in A$ then $(i-1,j)\in A$ and $(i,j-1)\in A$. The \emph{rows}
or \emph{parts} of $A$ are the integers $A_{j} = \max \{i | (i,j)\in
A \}$. We use $'$ to denote the conjugate partition
\[
A' = \{(i,j):(j,i)\in A \},
\]
and we write
\[
|A| = \sum_{j}A_{j} ,\quad ||A||^{2} = \sum_{j} A_{j}^{2},
\]
For each $(i,j)\in A$ we define the \emph{hook length} :
\[
h_{ij}(A) = A_{i}+A_{j}'-i-j+1 . 
\]
We write $\square$ for the unique partition of size 1. 

We also use the notation
\[
M(u,p) = \prod_{m=1}^{\infty}(1-up^{m})^{-m}
\]
and the short hand $M(p)=M(1,p)$.

For a collection of variables $x=(x_{1},x_{2},\dotsc )$ and two
partitions $A$ and $B$ let $s_{A/B}(x) = s_{A/B}(x_{1},x_{2},\dotsc )$
denote the skew Schur function (see for example
\cite[\S~5]{Macdonald}). Let
\[
\rho  = \left(-\frac{1}{2},-\frac{3}{2},-\frac{5}{2},\dotsc  \right)
\]
so that for example $p^{-R-\rho}$ is notation for the variable list
\[
\left(p^{-R_{1}+\frac{1}{2}},p^{-R_{2}+\frac{3}{2}},\dotsc  \right).
\]

Okounkov-Reshetikhin-Vafa derived a formula for the topological vertex
in terms of skew Schur functions. Translating their formulas
\cite[3.20\& 3.21]{Ok-Re-Va} into our notation, we get:
\begin{equation}\label{eqn: ORV formula for vertex}
\Vsf_{R_{1}R_{2}R_{3}}(p) = M(p) \, p^{-\half (\| R_{1} \| ^{2}+\| R_{2}'
\| ^{2}+\| R_{3} \| ^{2})} \,\Vtildesf_{R_{1}R_{2}R_{3}}(p)
\end{equation}
where we've defined
\begin{equation}\label{eqn: defn of Vtilde}
\Vtildesf_{R_{1}R_{2}R_{3}}(p) = s_{R_{3} '}(\ptotheminusrho )\, \sum_{A}\, s_{R_{1}
'/A}(p^{-R_{3} -\rho})\cdot s_{R_{2} /A}(p^{-R_{3} '-\rho} ).
\end{equation}

We will need the following Schur function identities. We remark that
the Schur functions which appear are all Laurent expansions in $p$ or
in $p^{-1}$ of rational functions in $p$ and many of the identities
should be understood as equalities of rational functions.

From \cite[3.10]{Ok-Re-Va} we have\footnote{There is a typo in
equation 3.10 in \cite{Ok-Re-Va} --- the exponent on the right hand
side should be $-\nu '-\rho$.}
\begin{equation}\label{eqn: symmetric fnc involution identity }
s_{A/B}(p^{C+\rho}) = (-1)^{|A|-|B|} \, s_{A'/B'}(p^{-C'-\rho}). 
\end{equation}
From \cite[pg~45]{Macdonald}:
\[
s_{R}(1,p,p^{2},\dotsc ) = p^{n(R)} \prod_{(i,j)\in R}\frac{1}{1-p^{h_{ij}(R)}}
\]
where \cite[pg~3]{Macdonald} $n(R) = \half ||R'||^{2}-\half |R|$ and
so using the homogeneity of $s_{R}$ we see that
\[
s_{R}(\ptotheminusrho ) = p^{\half ||R'||^{2}}\prod_{i,j\in
R}\frac{1}{(1-p^{h_{ij}(R)})}
\]
and so we arrive at
\begin{equation}\label{eqn: sR(ptorho)sR'(ptorho)= hook product}
s_{R}(\ptotheminusrho )s_{R'}(\ptotheminusrho ) = (-1)^{|R|}
\prod_{i,j\in R} \frac{1}{(1-p^{h_{ij}(R)})(1-p^{-h_{ij}(R)})}.
\end{equation}

We will also need the following identity \cite[pg~93(2)]{Macdonald}
\begin{equation}\label{eqn: sum sR/B*sR'/A = prod(1+xiyj)sum sA'/C*sB'/C'}
\sum_{R_{1}} \,s_{R_{1}/B}(x) \, s_{R_{1}'/A}(y) =
\prod_{i,j=1}^{\infty} (1+x_{i}y_{j})\sum_{C} s_{A'/C}(x)s_{B'/C}(y).
\end{equation}

Finally we will need the following lemma.
\begin{lemma}\label{lem: infinite product = macmahon*finite hook product}
The following equalities hold as formal power series in $u$ whose
 coefficients are rational functions of $p$. 
\begin{align}
\prod_{j,k=1}^{\infty} \left(1-up^{R_{j}+R'_{k}-j-k+1} \right) &=M(u,p)^{-1}
\prod_{j,k\in R} \left(1-up^{h_{jk}(R)} \right)
\left(1-up^{-h_{jk}(R)} \right) \label{eqn: 1st formula in hook lemma}\\
 \prod_{j,k=1}^{\infty} \left(1-up^{-R_{j}+R_{k}+j-k} \right) &=M(u,p)
\prod_{j,k\in R} \left(1-up^{h_{jk}(R)} \right)^{-1}
\left(1-up^{-h_{jk}(R)} \right)^{-1}. \label{eqn: 2nd formula in hook lemma}
\end{align}
Note that the right hand side of the equations are invariant under $p
\leftrightarrow p^{-1}$ and so we get two more identities by replacing
$p$ by $p^{-1}$ on the left hand side of the above equations.
\end{lemma}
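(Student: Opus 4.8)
The plan is to take logarithms and reduce all four identities to a single rational-function identity. Work in the ring $\mathcal{R}$ of formal power series in $u$ with coefficients rational functions of $p$; a rational function is unchanged whether expanded around $p$ or around $p^{-1}$, so $\sum_{m\geq 1}m\,p^{m}=\sum_{m\geq 1}m\,p^{-m}=\frac{p}{(1-p)^{2}}$ in $\mathcal{R}$, and hence $\log M(u,p)=\sum_{n\geq 1}\frac{u^{n}}{n}\frac{p^{n}}{(1-p^{n})^{2}}$ is invariant under $p\leftrightarrow p^{-1}$ and $M(u,p)=M(u,p^{-1})$. Both sides of \eqref{eqn: 1st formula in hook lemma} and \eqref{eqn: 2nd formula in hook lemma} are units of $\mathcal{R}$ with constant term $1$, so taking logarithms is legitimate. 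Using $R_{j}+R'_{k}-j-k+1=(R_{j}-j)+(R'_{k}-k)+1$ and, for \eqref{eqn: 2nd formula in hook lemma}, $-R_{j}+R_{k}+j-k=(R_{k}-k)-(R_{j}-j)$, and setting
\[
f_{R}(t):=\sum_{j\geq 1}t^{R_{j}-j}=P_{R}(t)+\frac{1}{t-1},\qquad P_{R}(t):=\sum_{j\geq 1}\bigl(t^{R_{j}-j}-t^{-j}\bigr)\in\ZZ[t,t^{-1}],
\]
comparison of the coefficients of $u^{n}/n$ (with $t=p^{n}$) shows that \eqref{eqn: 1st formula in hook lemma} is equivalent to
\[
t\,f_{R}(t)\,f_{R'}(t)=\frac{t}{(1-t)^{2}}+\sum_{(j,k)\in R}\Bigl(t^{\,h_{jk}(R)}+t^{-h_{jk}(R)}\Bigr),
\]
and \eqref{eqn: 2nd formula in hook lemma} to $-f_{R}(t)f_{R}(t^{-1})=\frac{t}{(1-t)^{2}}+\sum_{(j,k)\in R}\bigl(t^{\,h_{jk}(R)}+t^{-h_{jk}(R)}\bigr)$. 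These two are equivalent, because $f_{R'}(t)=-t^{-1}f_{R}(t^{-1})$ --- which in turn follows from $\sum_{s\in\ZZ+1/2}t^{-s}=0$ (as rational functions) and the particle--hole duality recalled next. The two $p\leftrightarrow p^{-1}$ variants are then automatic, since the right-hand sides of \eqref{eqn: 1st formula in hook lemma}--\eqref{eqn: 2nd formula in hook lemma} are manifestly invariant under $p\leftrightarrow p^{-1}$ in $\mathcal{R}$.

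It therefore suffices to establish the first displayed identity above. I would argue with the Maya-diagram encoding: put $\mathcal{P}=\{R_{i}-i+\tfrac12:i\geq 1\}\subset\ZZ+\tfrac12$ and $\mathcal{H}=(\ZZ+\tfrac12)\setminus\mathcal{P}$. Then $t^{1/2}f_{R}(t)=\sum_{p\in\mathcal{P}}t^{p}$, and since conjugation of partitions corresponds to $\mathcal{P}\mapsto-\mathcal{H}$, also $t^{1/2}f_{R'}(t)=\sum_{h\in\mathcal{H}}t^{-h}$; hence $t\,f_{R}(t)f_{R'}(t)=\sum_{p\in\mathcal{P}}\sum_{h\in\mathcal{H}}t^{\,p-h}$. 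Split this double sum according to $p>h$ or $p<h$ (equality never occurs). The classical description of hook lengths identifies $\{\,p-h:p\in\mathcal{P},\,h\in\mathcal{H},\,p>h\,\}$ with the multiset $\{h_{jk}(R):(j,k)\in R\}$, which yields the $\sum t^{\,h_{jk}}$ term; and one checks $\sum_{p<h}t^{\,p-h}=\frac{t}{(1-t)^{2}}+\sum_{(j,k)\in R}t^{-h_{jk}}$ by verifying it directly for $R=\varnothing$ (where it is $\sum_{m\geq 1}m\,t^{-m}=\frac{t}{(1-t)^{2}}$) and noting that adding one box to $R$ moves a single particle one step (a ``$10$'' in the $0$--$1$ string of $\mathcal{P}$ becoming ``$01$''), which shifts $\sum_{p>h}$ by exactly the hook-length changes of the new box and $\sum_{p<h}$ by the matching reciprocal changes --- a short induction on $|R|$. (Alternatively: from the dual Cauchy identity $\prod_{i,j}(1+x_{i}y_{j})=\sum_{\lambda}s_{\lambda}(x)s_{\lambda'}(y)$ with $x=p^{R+\rho}$, $y=-u\,p^{R'+\rho}$, reduce to a Nekrasov--Okounkov-type evaluation of $\sum_{\lambda}(-u)^{|\lambda|}s_{\lambda}(p^{R+\rho})s_{\lambda'}(p^{R'+\rho})$; the Maya-diagram route above is self-contained.)

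Finally, exponentiating this identity and using $\log M(u,p)=\sum_{n\geq 1}\frac{u^{n}}{n}\frac{p^{n}}{(1-p^{n})^{2}}$ together with $\log(1-u\,p^{\pm h_{jk}})=-\sum_{n\geq 1}\frac{u^{n}}{n}p^{\pm n\,h_{jk}}$ recovers \eqref{eqn: 1st formula in hook lemma}; then \eqref{eqn: 2nd formula in hook lemma} follows by the equivalence above, and the two $p\leftrightarrow p^{-1}$ versions from the stated symmetry. I expect the only genuine obstacle to be the combinatorial core of the second paragraph: making the Maya-diagram hook description precise and, above all, pinning down the complementary sum $\sum_{p<h}t^{\,p-h}$ together with the bookkeeping in the inductive step. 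This is classical but fiddly, and it is the one point where it is easy to gain or lose a stray MacMahon-type factor; everything else is formal manipulation of power series in $u$ over the field of rational functions in $p$.
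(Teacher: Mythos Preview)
Your overall strategy---take logarithms, reduce both identities to a single rational-function identity in a variable $t=p^{n}$, and link \eqref{eqn: 1st formula in hook lemma} and \eqref{eqn: 2nd formula in hook lemma} via the particle--hole relation $f_{R'}(t)=-t^{-1}f_{R}(t^{-1})$---is exactly the paper's. The identity you isolate, $t\,f_{R}(t)f_{R'}(t)-\frac{t}{(1-t)^{2}}=\sum_{(j,k)\in R}\bigl(t^{h_{jk}}+t^{-h_{jk}}\bigr)$, is precisely the paper's $P_{R}(x)=\sum_{(j,k)\in R}(x^{h_{jk}}+x^{-h_{jk}})$ in different notation.

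The one genuine difference is how the core identity is proved. You argue combinatorially with Maya diagrams: the hook lengths are the positive particle--hole gaps, and you handle the complementary sum $\sum_{p<h}t^{p-h}$ by induction on $|R|$. This is correct and classical, but as you yourself flag, the bookkeeping in the induction step is where errors creep in. The paper instead observes that $P_{R}(x)$ is a Laurent \emph{polynomial} (only finitely many terms survive the subtraction) which is invariant under $x\leftrightarrow x^{-1}$ (by your own identity $f_{R'}(t)=-t^{-1}f_{R}(t^{-1})$), and is therefore determined by its positive-exponent terms; since $R_{j}+R'_{k}-j-k+1$ equals $h_{jk}(R)$ when $(j,k)\in R$ and is negative otherwise, those terms are exactly $\{x^{h_{jk}(R)}\}$. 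This symmetry argument replaces your induction entirely and sidesteps the ``fiddly'' complementary sum; it is worth knowing as it is both shorter and less error-prone.
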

\begin{proof}
Moving the $M(u,p)$ to the left hand side of the first equation and
then taking the log, we get
\begin{align*}
&\quad \log \left(M(u,p) \prod_{j,k=1}^{\infty} \left(1-up^{R_{j}+R'_{k}-j-k+1} \right) \right)\\
=&\quad \log \left( \prod_{j,k=1}^{\infty} \left(1-up^{j+k-1}
\right)^{-1}  \right) + \log \left( \prod_{j,k=1}^{\infty} \left(1-up^{R_{j}+R'_{k}-j-k+1} \right)  \right) \\
=&\sum_{n\geq 1} \frac{u^{n}}{n} \sum_{j,k\geq 1}
\left(p^{n(j+k-1)}-p^{n(R_{j}+R'_{k}-j-k+1)} \right) \\
=&\sum_{n\geq 1}-\frac{u^{n}}{n} P_{R}(p^{n})
\end{align*}
where we've defined
\begin{align*}
P_{R}(x)& = \sum_{j,k\geq 1} (x^{R_{j}+R'_{k}-j-k+1} - x^{j+k-1})\\
& =\left( \sum_{j\geq 1} x^{R_{j}-j+\half}  \sum_{k\geq 1} x^{R_{k}-k+\half} \right)  -\frac{x}{(1-x)^{2}}\\
&=s_{\square}(x^{R+\rho})s_{\square}(x^{R'+\rho})  -\frac{x}{(1-x)^{2}}
\end{align*}
Since by equation \eqref{eqn: sR(ptorho)sR'(ptorho)= hook product}, we
have 
\[
s_{\square}\left(x^{R+\rho} \right) =
-s_{\square}\left(x^{-R'-\rho} \right),
\]
we see from the above equation that $P_{R}(x)$ is a sum of two
rational functions in $x$, each invariant under $x\leftrightarrow
x^{-1}$. Moreover, all but a finite number of terms in the sum
\[
\sum_{j,k\geq 1} \left(x^{R_{j}+R_{k}'-j-k+1} -x^{-j-i+1} \right) =P_{R}(x)
\]
cancel and so we deduce that $P_{R}(x)$ is a Laurent \emph{polynomial}
which is invariant under $x\leftrightarrow 1/x$. Consequently,
$P_{R}(x)$ is uniquely determined by its terms with positive
exponents. Since
\[
R_{j}+R'_{k}-j-k+1 = \begin{cases}
h_{jk}(R) & \text{if $(j,k)\in R$}\\
\text{negative}&\text{if $(j,k)\notin R$}
\end{cases}
\]
we see that the terms with positive exponent in the above expression for
$P_{R}$ are precisely $x^{h_{jk}(R)}$ where $(j,k)\in R$. Therefore
\[
P_{R}(x) = \sum_{j,k\in R}\left( x^{h_{jk}(R)}+ x^{-h_{jk}(R)} \right).
\]
Substituting back, we find
\begin{align*}
& \log \left(M(u,p) \prod_{j,k=1}^{\infty}
 \left(1-up^{R_{j}+R'_{k}-j-k+1} \right) \right) \\
=&\sum_{n\geq 1} -\frac{u^{n}}{n} \sum_{j,k\in R} \left( x^{h_{jk}(R)}+ x^{-h_{jk}(R)} \right)\\
=&\sum_{j,k\in R} \log\left(1-up^{h_{jk}(R)} \right)+ \log\left(1-up^{-h_{jk}(R)} \right)\\
=&\log \prod_{j,k\in R} \left(1-up^{h_{jk}(R)} \right) \left(1-up^{-h_{jk}(R)} \right)
\end{align*}
which proves equation~\eqref{eqn: 1st formula in hook lemma}. To prove
equation~\eqref{eqn: 2nd formula in hook lemma}, we observe that since
by equation~\eqref{eqn: sR(ptorho)sR'(ptorho)= hook product} 
\[
\sum_{k\geq 1}p^{R'_{k}-k+\half} = -\sum_{k\geq 1}p^{-R_{k}+k-\half} 
\]
so we have
\[
-\sum_{j,k\geq 1} \left(x^{-R_{j}+R_{k}+j-k} +x^{j+k-1} \right) = \sum_{j,k\in R}\left( x^{h_{jk}(R)}+ x^{-h_{jk}(R)} \right).
\]
Equation~\eqref{eqn: 2nd formula in hook lemma} then follows from a
similar logarithm argument as we did for equation~\eqref{eqn: 1st
formula in hook lemma}. 
\end{proof}

\subsection{The main derivation}

Recall that 
\[
Z'_{\ban} = M(p)^{-2} Z(\Fhat_{\ban})
\]
so that 
\[
Z'_{\ban} =
\sum_{R_{1}R_{2}R_{3}}(-Q_{1})^{R_{1}}(-Q_{2})^{R_{2}}(-Q_{3})^{R_{3}}\,
\Vtildesf_{R_{1}R_{2}R_{3}}
\,\Vtildesf_{R'_{1}R'_{2}R'_{3}} .
\]
We write
\[
Z'_{\ban }= \sum_{R} (-Q_{3})^{R}Z_{R}
\]
where
\begin{align*}
Z_{R}=&
\hspace{3.2cm}\sum_{R_{1}R_{2}} &&(-Q_{1})^{R_{1}}(-Q_{2})^{R_{2}}
\, \Vtildesf_{R_{1}R_{2}R}\, \Vtildesf_{R'_{1}R_{2}'R}
  \\
=&\, \, s_{R}(p^{-\rho})s_{R'}(p^{-\rho})
\sum_{A\, B\,  R_{1} R_{2}} &&(-Q_{1})^{R_{1}}(-Q_{2})^{R_{2}}
\cdot  s_{R'_{1}/A}(p^{-R-\rho})\cdot s_{R_{2}/A}(p^{-R'-\rho}) \\
&&&\hspace{3cm}\cdot s_{R_{1}/B}(p^{-R'-\rho})\cdot s_{R'_{2}/B}(p^{-R-\rho}) \\
=&\, \, s_{R}(p^{-\rho})s_{R'}(p^{-\rho})
\cdot \sum_{A\, B\, R_{2}}&&(-Q_{2})^{R_{2}}(-Q_{1})^{B}
\cdot s_{R_{2}/A}(p^{-R'-\rho})\cdot s_{R_{2}'/B}(p^{-R-\rho}) \\
&\hspace{3.2cm} \cdot \sum_{R_{1}}&& s_{R_{1}/B}(-Q_{1}p^{-R'-\rho} ) \cdot s_{R_{1}'/A}(p^{-R-\rho}) 
\end{align*}

From \cite[(2),page 93]{Macdonald} we have
\[
\sum_{R_{1}}s_{R_{1}/B}(x)s_{R_{1}'/A}(y) = \prod_{i,j=1}^{\infty}
(1+x_{i}y_{j}) \sum_{C} s_{A'/C}(x)s_{B'/C'}(y) .
\]

Using the above and equation~\eqref{eqn: sR(ptorho)sR'(ptorho)= hook
product} we get

\begin{align*}
Z_{R} = & (-1)^{R} \prod_{i,j\in R} (1-p^{h_{ij}(R)})^{-1}
(1-p^{-h_{ij}(R)})^{-1} \cdot \prod_{i,j=1}^{\infty}
(1-Q_{1}p^{i+j-1-R_{i}'-R_{j}}) \\
&\hspace{.5cm}\cdot \sum_{A\, B\, R_{2}\, C} (-Q_{2})^{R_{2}} (-Q_{1})^{B}
\cdot s_{R_{2}/A}(p^{-R'-\rho}) \cdot s_{R_{2}'/B}(p^{-R-\rho}) \\
& \hspace{4.8cm} \cdot s_{A'/C}(-Q_{1}p^{-R'-\rho} )\cdot s_{B'/C'}(p^{-R-\rho }). 
\end{align*}

Using equation~\eqref{eqn: 1st formula in hook lemma} from
Lemma~\ref{lem: infinite product = macmahon*finite hook product} we
get
\begin{align}\label{eqn: ZR=HR*three sums}
Z_{R} = H_{R} \sum_{C\, R_{2}} (-Q_{2})^{R_{2}} (-Q_{1})^{C}
&\cdot \sum_{A} s_{R_{2}/A}(p^{-R'-\rho})\cdot s_{A'/C}(-Q_{1}p^{-R'-\rho})\nonumber \\
&\cdot \sum_{B} s_{R_{2}'/B}(p^{-R-\rho})\cdot s_{B'/C'}(-Q_{1}p^{-R-\rho})
\end{align}
where
\[
H_{R} = (-1)^{R}M(Q_{1},p)^{-1} \prod_{i,j\in R}
\frac{(1-Q_{1}p^{h_{ij}(R)})(1-Q_{1}p^{-h_{ij}(R)})}{(1- p^{h_{ij}(R)})(1-
p^{-h_{ij}(R)})}
\]

Now using (see \cite[5.10 page 72]{Macdonald}
\[
\sum_{\nu} s_{\lambda /\nu}(x)s_{\nu /\mu}(y) = s_{\lambda /\mu}(x,y)
\]
and equation~\eqref{eqn: symmetric fnc involution identity }, we can
rewrite the second and third sums in equation~\ref{eqn: ZR=HR*three
sums} as 
\begin{align*}
\sum_{A} s_{R_{2}/A}(p^{-R'-\rho})\cdot s_{A'/C}(-Q_{1}p^{-R'-\rho})
&= s_{R_{2}/C'}(p^{-R'-\rho},Q_{1}p^{R+\rho}) \\
\sum_{B} s_{R_{2}'/B}(p^{-R-\rho})\cdot s_{B'/C'}(-Q_{1}p^{-R-\rho})
&= s_{R_{2}'/C}(p^{-R-\rho},Q_{1}p^{R'+\rho}) .
\end{align*}

Substituting back into equation~\eqref{eqn: ZR=HR*three sums} we get
\begin{align*}
Z_{R} = & H_{R} \sum_{C,R_{2}} (-Q_{2})^{R_{2}}(-Q_{1})^{C} \cdot 
s_{R_{2}/C'}(p^{-R'-\rho},Q_{1}p^{R+\rho}) \cdot s_{R'_{2}/C}(p^{-R-\rho
}Q_{1}p^{R'+\rho }) \\
=&  H_{R} \sum_{C,R_{2}} (Q_{1}Q_{2})^{R_{2}}\cdot s_{R_{2}/C'}(\mathbf{y},\mathbf{y'})\cdot s_{R_{2}'/C}(\mathbf{x},\mathbf{x}')
\end{align*}
where 
\[
\mathbf{y} = \{-Q_{1}^{-1} p^{-R'-\rho} \},\quad \mathbf{y'}=
\{-p^{R+\rho} \}, \quad \mathbf{x} = \{p^{-R-\rho} \}, \quad
\mathbf{x}' = \{Q_{1}p^{R'+\rho } \}. 
\]
We now use \cite[page 94, equation (b)]{Macdonald} to obtain
\begin{align*}
Z_{R}  = H_{R} &\prod_{i=1}^{\infty} (1-Q_{1}Q_{2})^{-1}\\
 \cdot & \prod_{j,k}(1+Q_{1}^{i}Q_{2}^{i} x_{j}y_{k})\cdot 
(1+Q_{1}^{i}Q_{2}^{i} x'_{j}y_{k})\cdot 
(1+Q_{1}^{i}Q_{2}^{i} x_{j}y'_{k})\cdot 
(1+Q_{1}^{i}Q_{2}^{i} x'_{j}y'_{k})
\end{align*}

We deal with each of four factors in the product over $j$ and $k$
using Lemma~\ref{lem: infinite product = macmahon*finite hook product}:

\begin{align*}
 \prod_{j,k}(1+Q_{1}^{i}Q_{2}^{i} x_{j}y_{k})&= \prod_{j,k}(1-Q_{1}^{i-1}Q_{2}^{i}\,p^{j+k-1-R_{k}'-R_{j} })\\
&= M(Q_{1}^{i-1}Q_{2}^{i},p)^{-1}\prod_{j,k\in R} \left(1-Q_{1}^{i-1}Q_{2}^{i} \,p^{h_{jk}(R)} \right)  \left(1-Q_{1}^{i-1}Q_{2}^{i} \,p^{-h_{jk}(R)} \right)  \\
 \prod_{j,k}(1+Q_{1}^{i}Q_{2}^{i} x'_{j}y_{k}) &= \prod_{j,k}(1-Q_{1}^{i}Q_{2}^{i}\,p^{-j+k-R_{k}'+R_{j}' })\\
&= M(Q_{1}^{i}Q_{2}^{i},p)\prod_{j,k\in R} \left(1-Q_{1}^{i}Q_{2}^{i} \,p^{h_{jk}(R)} \right)^{-1}  \left(1-Q_{1}^{i}Q_{2}^{i} \,p^{-h_{jk}(R)} \right)^{-1}  \\
 \prod_{j,k}(1+Q_{1}^{i}Q_{2}^{i} x_{j}y'_{k}) &= \prod_{j,k}(1-Q_{1}^{i}Q_{2}^{i}\,p^{j-k-1+R_{k}-R_{j} })\\
&= M(Q_{1}^{i}Q_{2}^{i},p) \prod_{j,k\in R} \left(1-Q_{1}^{i}Q_{2}^{i} \,p^{h_{jk}(R)} \right)^{-1}  \left(1-Q_{1}^{i}Q_{2}^{i} \,p^{-h_{jk}(R)} \right)^{-1}  \\
\prod_{j,k}(1+Q_{1}^{i}Q_{2}^{i} x'_{j}y'_{k}) &= \prod_{j,k}(1-Q_{1}^{i+1}Q_{2}^{i}\,p^{-j-k+1+R_{k}+R_{j}' })\\
&= M(Q_{1}^{i+1}Q_{2}^{i},p)^{-1}\prod_{j,k\in R} \left(1-Q_{1}^{i+1}Q_{2}^{i} \,p^{h_{jk}(R)} \right)  \left(1-Q_{1}^{i+1}Q_{2}^{i} \,p^{-h_{jk}(R)} \right) 
\end{align*}

Substituting back we see
\begin{align*}
&Z_{R} = H_{R} \cdot \prod_{i=1}^{\infty} \frac{M(Q_{1}^{i}Q_{2}^{i},p)^{2}}{ (1-Q_{1}^{i}Q_{2}^{i}) M(Q_{1}^{i-1}Q_{2}^{i},p)M(Q_{1}^{i+1}Q_{2}^{i},p)}\\
&\\
&\cdot \prod_{j,k\in R}
\frac{\left(1-Q_{1}^{i-1}Q_{2}^{i}p^{h_{jk}(R)}
\right)\left(1-Q_{1}^{i-1}Q_{2}^{i}p^{-h_{jk}(R)}
\right)\left(1-Q_{1}^{i+1}Q_{2}^{i}p^{h_{jk}(R)}
\right)\left(1-Q_{1}^{i+1}Q_{2}^{i}p^{-h_{jk}(R)}
\right)}{\left(1-Q_{1}^{i}Q_{2}^{i}p^{h_{jk}(R)}
\right)^{2}\left(1-Q_{1}^{i}Q_{2}^{i}p^{-h_{jk}(R)} \right)^{2}}
\end{align*}

Putting $H_{R}$ back in we get 
\begin{align*}
&Z_{R}= Z_{\inst} \cdot (-1)^{R} \cdot \prod_{j,k\in R} \frac{\left(1-Q_{1}p^{h_{jk}(R)} \right)\left(1-Q_{1}p^{-h_{jk}(R)} \right)}{\left(1-p^{h_{jk}(R)} \right)\left(1-p^{-h_{jk}(R)} \right)}\\
&\\
&\cdot  \prod_{i=1}^{\infty}
\frac{\left(1-Q_{1}^{i+1}Q_{2}^{i}p^{h_{jk}(R)}
\right)\left(1-Q_{1}^{i+1}Q_{2}^{i}p^{-h_{jk}(R)} \right)
\left(1-Q_{1}^{i-1}Q_{2}^{i}p^{h_{jk}(R)}
\right)\left(1-Q_{1}^{i-1}Q_{2}^{i}p^{-h_{jk}(R)} \right)}{\left(1-Q_{1}^{i}Q_{2}^{i}p^{h_{jk}(R)}
\right)^{2}\left(1-Q_{1}^{i}Q_{2}^{i}p^{-h_{jk}(R)} \right)^{2}}\\
&\\
&=  Z_{\inst} \cdot (-1)^{R} \cdot \prod_{j,k\in R}\\
&\cdot \prod_{i=1}^{\infty}
\frac{\left(1-Q_{1}^{i-1}Q_{2}^{i}p^{h_{jk}(R)}
\right)\left(1-Q_{1}^{i-1}Q_{2}^{i}p^{-h_{jk}(R)} \right)
\left(1-Q_{1}^{i-1}Q_{2}^{i}p^{h_{jk}(R)}
\right)\left(1-Q_{1}^{i-1}Q_{2}^{i}p^{-h_{jk}(R)}
\right)}{\left(1-Q_{1}^{i-1}Q_{2}^{i-1}p^{h_{jk}(R)}
\right)\left(1-Q_{1}^{i-1}Q_{2}^{i-1}p^{-h_{jk}(R)}
\right)\left(1-Q_{1}^{i}Q_{2}^{i}p^{h_{jk}(R)}
\right)\left(1-Q_{1}^{i}Q_{2}^{i}p^{-h_{jk}(R)} \right)}\\
\end{align*}

where we've defined
\begin{align*}
Z_{\inst} &=M(Q_{1},p)^{-1} \prod_{i=1}^{\infty}  \frac{M(Q_{1}^{i}Q_{2}^{i},p)^{2}}{ (1-Q_{1}^{i}Q_{2}^{i}) M(Q_{1}^{i-1}Q_{2}^{i},p)M(Q_{1}^{i+1}Q_{2}^{i},p)} \\
&= \prod_{i=1}^{\infty}
\frac{M(Q_{1}^{i}Q_{2}^{i},p)^{2}}{(1-Q_{1}^{i}Q_{2}^{i})M(Q_{1}^{i-1}Q_{2}^{i},p)M(Q_{1}^{i}Q_{2}^{i-1},p)}
\end{align*}

We now make the variable change given by equation~\eqref{eqn: variable
change Qs to q,y,etc}, sum over the remaining partition, and write the
result in terms of elliptic genera:
\begin{align*}
Z'_{\ban} =& Z_{\inst} \cdot \sum_{R} (y^{-1}Q)^{R} \\
& \cdot \prod_{i=1}^{\infty}\prod_{j,k\in R} \frac{(1-t^{h_{jk}
}yq^{i-1})(1-t^{-h_{jk} }yq^{i-1})(1-t^{h_{jk}
}y^{-1}q^{i})(1-t^{-h_{jk} }y^{-1}q^{i})}{(1-t^{h_{jk}
}q^{i-1})(1-t^{-h_{jk} }q^{i-1})(1-t^{h_{jk} }q^{i})(1-t^{-h_{jk}
}q^{i})}\\
&\\
=& Z_{\inst} \cdot \sum_{k=0}^{\infty} Q^{k} \Ell_{q,y}
((\CC^{2})^{[k]},t) \\
=& Z_{\inst}\cdot  \prod_{n=0}^{\infty}\prod_{m=1}^{\infty}\prod_{l,k\in
\ZZ} (1-t^{k}q^{n}y^{l}Q^{m})^{-c(nm,l,k)}\\
=& Z_{\inst}\cdot  \prod_{n=0}^{\infty}\prod_{m=1}^{\infty}\prod_{l,k\in
\ZZ} (1-t^{k}q^{n}y^{l}Q^{m})^{-c(4nm-l^{2},k)}
\end{align*}
where the last three equalities come from equation~\eqref{eqn: formula
for Ell via Atiyah-Bott localization}, Theorem~\ref{thm: DMVV
formula}, and Proposition~\ref{prop: generating function for c(a,k) =
c(4n-l^2,k) =c(n,l,k)} respectively.

Returning to the DT variables via
\[
t=p, \quad q=Q_{1}Q_{2}, \quad y=Q_{1}, \quad Q=Q_{1}Q_{3},
\]
reindexing by
\[
d_{1} = n+l+m,\quad d_{2}=n,\quad d_{3}=m,
\]
and observing that 
\[
||d|| =
2d_{1}d_{2}+2d_{2}d_{3}+2d_{3}d_{1}-d_{1}^{2}-d_{2}^{2}-d_{3}^{2} =4mn-l^{2}
\]
we find
\[
Z'_{\ban} = Z_{\inst} \cdot \prod_{d_{3}=1}^{\infty}\prod_{
d_{2}=0}^{\infty}\prod_{d_{1},k\in \ZZ}
(1-p^{k}Q_{1}^{d_{1}}Q_{2}^{d_{2}}Q_{3}^{d_{3}})^{-c(||d||,k)}
\]
Observing further that when $d_{1}<0$ and $d_{3}>0$, $||d|| =
4d_{3}d_{1}-(d_{2}-d_{1}-d_{3})^{2}<-1$ and so $c(||d||,k)=0$, we get 
\begin{align*}
Z(\Fhat_{\ban} )&= M(p)^{2}\cdot  Z'_{\ban}\\
&  = M(p)^{2}\cdot Z_{\inst}\cdot 
\prod_{d_{3}=1}^{\infty} \prod_{d_{2},d_{3}=0}^{\infty} \prod_{k\in
\ZZ} (1-p^{k}Q_{1}^{d_{1}}Q_{2}^{d_{2}}Q_{3}^{d_{3}})^{-c(||d||,k)}
\end{align*}

Finally we claim that
\[
M(p)^{2}\cdot Z_{\inst} = \prod_{(*)}
(1-p^{k}Q_{1}^{d_{1}}Q_{2}^{d_{2}}Q_{3}^{d_{3}})^{-c(||d||,k)}
\]
where the product is over
\[
(*)\quad \quad  \quad d_{3}=0,\quad d_{1},d_{2}\geq 0,\quad \text{and } \begin{cases}
k\in \ZZ & (d_{1},d_{2})\neq (0,0),\\
k>0 & (d_{1},d_{2}) = (0,0).
\end{cases} 
\]
Indeed, if $d_{3}=0$, then $||d||=-(d_{2}-d_{1})^{2}$ and by
Corollary~\ref{cor: c(-1,k) and c(0,k)} the product over $(*)$ reduces
to the terms where
\[
k\in \ZZ ,\quad (d_{1},d_{2}) = (d,d-1),(d-1,d),(d,d),\quad  d>1
\]
and the special case $d_{1}=d_{2}=0, k>0$. Applying
Corollary~\ref{cor: c(-1,k) and c(0,k)}  we easily deduce the above
claim. 

Substituting the claimed equation back into the previous equation for
$Z(\Fhat_{\ban})$ then finishes the proof of Theorem~\ref{thm: formula
for Zban}.  \qed 
    
\section{Geometry of the Banana manifold}\label{sec: geom of ban man}

In this section we compute the Hodge numbers of the banana manifold
$X_{\ban }$, we show that the fiber classes are spanned by the banana
curves $C_{1},C_{2},C_{3}$, and we prove Proposition~\ref{prop: toric
description of FhatBan} which describes the formal neighborhood of the
singular fibers.

\newcommand{\othermark}[1]{{#1}^{\scriptscriptstyle{\#}}}

Let $p:S\to \PP^{1}$ be a rational elliptic surface with 12 singular
fibers, each having one node. Let $\othermark{p}:\othermark{S}\to
\PP^{1}$ be an isomorphic copy of $S$ and consider the fibered product
\[
X_{\sing} = S\times_{\PP^{1}}\othermark{S}.
\]
$X_{\sing}$ is singular at the 12 points where both $p$ and
$\othermark{p}$ are not smooth, namely at the product of the nodes. 
To see that these points are conifold singularities, note that for
each node $n\in S$, and corresponding node $\othermark{n}\in \othermark{S}$, there
exists formal local coordinates
$(x ,y ),(\othermark{x} ,\othermark{y} ),t $ about $n ,$
$\othermark{n} $, and $p(n )=\othermark{p}(\othermark{n} )\in
\PP^{1}$ respectively such that the maps $p$ and $\othermark{p}$ are
given by $x y =t $ and
$\othermark{x} \othermark{y} =t $. Consequently,
$x y =\othermark{x} \othermark{y} $ is the local equation
of $X_{\sing}$ at $(n ,\othermark{n} )$. 

Let $\Delta \subset X_{\sing}$ be the divisor given by the diagonal in
the fibered product. From the above local description of the
singularities, all of which lie on $\Delta$, we see that 
\[
X_{\ban} =\Bl_{\Delta}(X_{\sing}),
\]
the blowup of $X_{\sing}$ along the diagonal is smooth and 
\[
X_{\ban}\to X_{\sing}
\]
is a conifold resolution.

\subsection{Relation to the Schoen threefold and the Hodge numbers}

\begin{lemma}\label{lem: e(Xban)=24}
$e(X_{\ban} )=24$. 
\end{lemma}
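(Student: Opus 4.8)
The plan is to compute the Euler characteristic of $X_{\ban}$ by relating it to the Euler characteristic of the singular fibered product $X_{\sing} = S \times_{\PP^1} \othermark{S}$ and tracking the change under the conifold resolution $X_{\ban} = \Bl_{\Delta}(X_{\sing}) \to X_{\sing}$. First I would recall that a generic rational elliptic surface $S \to \PP^1$ has $e(S) = 12$: it is $\PP^2$ blown up at $9$ points, so $e(S) = 3 + 9 = 12$, and it has exactly $12$ nodal singular fibers (each a rational curve with one node, Euler characteristic $1$) with all other fibers smooth of Euler characteristic $0$. This is consistent with the fibration formula $e(S) = \sum_{t \in \PP^1}(e(F_t) - e(F_{\mathrm{gen}})) + e(\PP^1)e(F_{\mathrm{gen}}) = 12 \cdot 1 + 0 = 12$.

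Next I would compute $e(X_{\sing})$ using multiplicativity of Euler characteristic over the base $\PP^1$: writing $q = p \times_{\PP^1} \othermark{p} : X_{\sing} \to \PP^1$, the fiber over $t$ is $F_t \times F_t'$ (with $F_t, F_t'$ the fibers of $S, \othermark{S}$ over $t$, which coincide since $S \cong \othermark{S}$). Over the $12$ singular points the fiber is $N \times N$ where $N$ is a nodal cubic with $e(N) = 1$, so $e(N\times N) = 1$; over all other points the fiber is $E \times E$ with $e(E \times E) = 0$. Since Euler characteristic is a constructible invariant, stratifying $\PP^1$ into the $12$ points and their complement gives $e(X_{\sing}) = 12 \cdot 1 + (e(\PP^1) - 12)\cdot 0 = 12$.

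Finally I would account for the resolution. The $12$ singular points of $X_{\sing}$ are conifold (ordinary double) points, as established in the excerpt via the local equation $xy = \othermark{x}\,\othermark{y}$. A small resolution of a conifold point replaces a single point (Euler characteristic $1$) by a $\PP^1$ (Euler characteristic $2$), so each of the $12$ resolutions increases $e$ by $1$. Away from these points $X_{\ban} \to X_{\sing}$ is an isomorphism, so by additivity $e(X_{\ban}) = e(X_{\sing}) + 12 \cdot (e(\PP^1) - e(\mathrm{pt})) = 12 + 12 \cdot (2 - 1) = 24$. I should double-check that the blowup $\Bl_{\Delta}(X_{\sing})$ along the diagonal does indeed restrict to a small resolution at each node — i.e. that the exceptional locus over each conifold point is a single $\PP^1$ rather than a divisor — which follows from the local coordinate description, since blowing up the Weil divisor $\Delta = \{x = \othermark{x},\ y = \othermark{y}\}$ in $\{xy = \othermark{x}\,\othermark{y}\}$ is one of the two small resolutions of the conifold. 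The main (minor) obstacle is verifying this local picture carefully; everything else is routine multiplicativity and additivity of the topological Euler characteristic.
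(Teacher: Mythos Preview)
Your proof is correct and follows essentially the same strategy as the paper: fiber over $\PP^{1}$, compute the Euler characteristic contribution of each fiber type, and then account for the conifold resolution replacing $12$ points by $12$ copies of $\PP^{1}$.

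The only tactical difference is in how the singular-fiber contribution is computed. You use the elementary observation that a nodal rational curve $N$ has $e(N)=1$, hence $e(N\times N)=1$ by multiplicativity, giving $e(X_{\sing})=12$ directly. The paper instead removes the $12$ conifold points first and argues that every fiber of $X_{\sing}^{\circ}\to \PP^{1}$ has Euler characteristic zero, invoking for the singular fibers the $\CC^{*}\times \CC^{*}$ action (constructed elsewhere in the paper) which acts freely once the conifold point is removed. Your route is more self-contained for this lemma, since it does not depend on having set up the torus action; the paper's route reuses machinery already in place. Numerically the two agree because $e(N\times N\setminus\{\text{pt}\})=0$ is equivalent to $e(N\times N)=1$.
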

\begin{proof}
Since the map $X_{\ban }\to X_{\sing}$ contracts 12 $\PP^{1}$s to 12
singular points, we can write the following equation in the
Grothendieck group of varieties,
\[
[X_{\ban}] = [X^{o}_{\sing}] + 12[\PP^{1}]
\]
where $X_{\sing}^{o}\subset X_{\sing}$ is the non-singular
locus. Since $e(-)$ is a homomorphism from the Grothendieck group to
the integers, we see that $e(X_{\ban})=e(X_{\sing}^{o})+24$. We claim
the Euler characteristic of the fibers of $X_{\sing}^{o}\to \PP^{1}$
are all zero. The smooth fibers are $E\times E$, a product of smooth
elliptic curves and hence have zero Euler characterisitic. The
singular fibers have a $\CC^{*}\times \CC^{*}$ action constructed in
\S~\ref{subsec: Mordell weil gps and actions}. The fixed points of the
action on the fibers of $X_{\sing}\to \PP^{1}$ are exactly the
conifold points and so the action on the fibers of $X^{o}_{\sing}\to
\PP^{1}$ is free. Consequently, the Euler characteristics of these
fibers are zero. The lemma follows.
\end{proof}

The banana manifold is related to the Schoen Calabi-Yau threefold by a
conifold transition. This will allow us to compute the Hodge numbers
of the banana manifold in terms of the (well-known) Hodge numbers of
the Schoen threefold. The Schoen threefold can be defined as
\[
X_{\Sch} \subset \PP^{2}\times \PP^{2}\times \PP^{1},
\]
the intersection of two generic hypersurfaces of multi-degree
$(3,0,1)$ and $(0,3,1)$. It is a simply connected Calabi-Yau threefold
with $h^{1,1}(X_{\Sch})=19$ and  $h^{2,1}(X_{\Sch})=19$.

\begin{proposition}\label{prop: h11(Xban)=20 h21(Xban)=8}
$X_{\ban}$ is a simply connected Calabi-Yau with $h^{2,1}(X_{\ban})=8$
 and $h^{1,1}(X_{\ban})=20$. 
\end{proposition}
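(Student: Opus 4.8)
The plan is to compute the Hodge numbers through the conifold transition relating $X_{\ban}$ to the Schoen threefold $X_{\Sch}$, whose Hodge numbers $h^{1,1}(X_{\Sch})=h^{2,1}(X_{\Sch})=19$ are known, using $e(X_{\ban})=24$ from Lemma~\ref{lem: e(Xban)=24} to fix the one remaining ambiguity. First I would dispose of simple connectivity and the Calabi--Yau property: the small resolution $X_{\ban}\to X_{\sing}$ has simply connected fibres (points, or the $\PP^{1}$'s over the twelve nodes), so $\pi_{1}(X_{\ban})\cong\pi_{1}(X_{\sing})$; and passing from $X_{\Sch}$ to its nodal degeneration $X_{\sing}$ leaves $\pi_{1}$ unchanged, since the vanishing $S^{3}$'s and the nodes' links $S^{2}\times S^{3}$ are simply connected, so $\pi_{1}(X_{\sing})\cong\pi_{1}(X_{\Sch})=1$. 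Since $X_{\ban}\to X_{\sing}$ is a small, hence crepant, resolution of the Gorenstein threefold $X_{\sing}$ with $\omega_{X_{\sing}}\cong\OO_{X_{\sing}}$, we get $K_{X_{\ban}}\cong\OO_{X_{\ban}}$, and combined with $\pi_{1}(X_{\ban})=1$ this forces $h^{0,1}(X_{\ban})=h^{0,2}(X_{\ban})=0$.

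For the Hodge numbers I would record the transition precisely: $X_{\sing}=S\times_{\PP^{1}}S$ has exactly the twelve nodes lying over the twelve singular fibres, and it is the central fibre of the flat family $\{E_{1}\times_{\PP^{1}}E_{2}\}$ in which the two rational elliptic surfaces are degenerated so that their twelve critical loci in $\PP^{1}$ collide in pairs; the general member is a smooth Schoen threefold $X_{\Sch}$, so $X_{\sing}$ smooths to $X_{\Sch}$. On the other side $X_{\ban}\to X_{\sing}$ is a projective small resolution with twelve disjoint exceptional curves $\ell_{1},\dots,\ell_{12}\cong\PP^{1}$. A Mayer--Vietoris computation comparing the cohomology of $X_{\ban}$, $X_{\sing}$ and $X_{\Sch}$ (the standard behaviour of cohomology under a conifold transition) gives
\[
b_{2}(X_{\ban})-b_{2}(X_{\Sch})=r,\qquad r:=\dim_{\QQ}\langle[\ell_{1}],\dots,[\ell_{12}]\rangle\subset H_{2}(X_{\ban},\QQ).
\]
Hence $h^{1,1}(X_{\ban})=19+r$, and since $e(X_{\ban})=2\bigl(h^{1,1}(X_{\ban})-h^{2,1}(X_{\ban})\bigr)=24$ we obtain $h^{2,1}(X_{\ban})=7+r$; equivalently $h^{2,1}(X_{\ban})=19-(12-r)$, exactly as expected for a conifold transition smoothing twelve nodes.

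It remains to show $r=1$, i.e. that the twelve exceptional curves are all homologous; this is the crux. Each $\ell_{i}$ is contracted by $X_{\ban}\to X_{\sing}\to\PP^{1}$ onto a point, so $[\ell_{i}]\in\Gamma=\Ker\pi_{*}$, and because the twelve singular fibres of $\pi$ together with their formal neighbourhoods are mutually isomorphic in a way compatible with the fibration (Proposition~\ref{prop: toric description of FhatBan}), the exceptional curve of the small resolution is one and the same banana $\PP^{1}$ in each of them; in particular $[\ell_{1}]=\dots=[\ell_{12}]$ in $\Gamma$, so $r=1$. This gives $h^{1,1}(X_{\ban})=20$ and $h^{2,1}(X_{\ban})=8$, consistent with $e(X_{\ban})=2(20-8)=24$. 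The step requiring genuine care is precisely this identification $[\ell_{1}]=\dots=[\ell_{12}]$: although it is forced by the uniform local geometry of the twelve banana fibres, making it rigorous means transporting the homology class of a rigid curve across the fibration, for which the explicit local model for $\Fhat_{\ban}$ supplied by Proposition~\ref{prop: toric description of FhatBan} is the natural tool.
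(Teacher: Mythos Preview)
Your overall strategy coincides with the paper's: both use the conifold transition from the Schoen threefold $X_{\Sch}$ (with $h^{1,1}=h^{2,1}=19$) together with $e(X_{\ban})=24$. The divergence is at the crucial numerical step. You aim to compute directly $r=\dim_{\QQ}\langle[\ell_{1}],\dots,[\ell_{12}]\rangle=1$, whereas the paper computes the complementary quantity $\sigma=\delta-r$, the codimension of the nodal locus in the Schoen moduli space, by a parameter count: the full family of Schoen threefolds is $19$-dimensional (two $8$-parameter families of rational elliptic surfaces plus a $3$-parameter identification of the bases), while acquiring all $12$ nodes forces the two surfaces to coincide and the base isomorphism to be the identity, an $8$-dimensional locus, so $\sigma=11$ and $h^{1,1}(X_{\ban})=19+(\delta-\sigma)=20$.

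Your argument for $r=1$ has a genuine gap. Proposition~\ref{prop: toric description of FhatBan} only says that the twelve formal neighbourhoods $\Fhat_{\ban}$ are mutually isomorphic; this is a purely local statement and carries no information about the \emph{global} homology classes $[\ell_{i}]\in H_{2}(X_{\ban},\ZZ)$. Two $(-1,-1)$-curves in a Calabi--Yau threefold always have isomorphic formal neighbourhoods, yet need not be homologous. ``Transporting the homology class across the fibration'' is not available either: the $\ell_{i}$ are rigid, and the twelve singular fibres lie over distinct points of $\PP^{1}$ with no family connecting them. Indeed, the paper establishes that the exceptional curves are all homologous only \emph{after} the Hodge number computation (see the proof of Lemma~\ref{lem: C1,C2,C3 span fiber classes}, which invokes $H_{2}(X_{\ban})\cong H_{2}(X_{\Sch})\oplus\ZZ$), so that route would be circular here. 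The clean fix is to adopt the paper's deformation-theoretic count of $\sigma$; once $\sigma=11$ is known, your formula $r=\delta-\sigma=1$ drops out and the rest of your argument goes through.
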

\begin{proof}
We first show that there is a conifold transition from $X_{\Sch}$ to
$X_{\ban}$. The generic hypersurface in $\PP^{2}\times \PP^{1}$ of
degree $(3,1)$ is a rational elliptic surface $S\to \PP^{1}$ and
consequently the projections of $X_{\Sch} \subset \PP^{2}\times
\PP^{2}\times \PP^{1}$ onto various subfactors realize $X_{\Sch}$ as a
fibered product
\[
X_{\Sch} = S_{0}\times_{\PP^{1}}S_{1}
\]
where $S_{i}\to  \PP^{1}$ are distinct generic rational surfaces. We
may choose a 1-parameter family of elliptic surfaces $S_{t}$ which interpolates
between $S_{0}$ and $S_{1}$. Then the family of threefolds
\[
X_{t} = S_{0}\times_{\PP^{1}} S_{t}
\]
are all Schoen threefolds for $t\neq 0$ and $X_{0}=X_{\sing}$ and
since $X_{\ban}\to X_{\sing}$ is a conifold resolution, we get a
conifold transition $X_{\Sch} \rightsquigarrow X_{\ban}$. Since
conifold transitions preserve simple connectivity and the Calabi-Yau
condition, we see that $X_{\ban}$ is a simply connected Calabi-Yau
threefold. The change in $h^{1,1}$ through a conifold transition is
determined by the codimension of the family of singular threefolds
inside the full deformation space of the threefold (see
\cite[\S~3.1]{Morrison-LookingGlass}).  The family of Schoen
threefolds is 19 dimensional: there are the two 8 dimensional families
of rational elliptic surfaces $S_{i}\to \PP^{1}$ and the three
dimensional family of isomorphisms between the bases of $S_{i}\to
\PP^{1}$ required to form the fibered product $S_{0}\times_{\PP^{1}}
S_{1}$. The locus of such fibered products with
\[
\delta =12
\]
conifold singularities is 8 dimensional: the 12 nodes of $S_{0}$ and
$S_{1}$ must occur over the same fibers which implies that $S_{0}\cong
S_{1}$ and the isomorphism of the base is the identity. The
codimension $\sigma $ is thus
\[
\sigma =19-8=11.
\]
Following
\cite[\S~3.1]{Morrison-LookingGlass}, we compute:
\begin{align*}
h^{1,1}(X_{\ban}) &= h^{1,1}(X_{\Sch}) + \delta -\sigma \\
&= 19+12-11=20.
\end{align*}
Then since $e(X_{\ban})=24 = 2(h^{1,1}(X_{\ban})-h^{2,1}(X_{\ban}))$
we see that $h^{2,1}(X_{\ban})=8$ (in particular, the 8 dimensional
space of banana manifolds constructed above is the whole deformation
space). 
\end{proof}

\begin{lemma}\label{lem: C1,C2,C3 span fiber classes}
Let $C_{1}$, $C_{2}$, $C_{3}$ be the banana curves in a singular fiber
$F_{\sing}$. Then $\Gamma =\Ker (\pi_{*}:H_{2}(X_{\ban},\ZZ )\to
H_{2}(\PP^{1},\ZZ ))$ is spanned by the classes of $C_{1}$, $C_{2}$,
and $C_{3}$. 
\end{lemma}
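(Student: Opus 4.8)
Recall that $\Gamma$ is the lattice of curve classes supported in the fibers of $\pi\colon X_{\ban}\to\PP^{1}$. The plan is to determine its integral structure by playing off a generic smooth fiber $E\times E$, where $\operatorname{NS}(E\times E)\cong\ZZ^{3}$ for generic $E$, against a banana fiber, for which Proposition~\ref{prop: toric description of FhatBan} provides a completely explicit toric model. As a first step I would check that $[C_{1}],[C_{2}],[C_{3}]$ are $\QQ$-linearly independent in $H_{2}(X_{\ban},\QQ)$; it suffices to produce three divisors against which the $C_{i}$ pair non-degenerately. Take the two pullbacks $p_{1}^{*}\sigma$ and $p_{2}^{*}\sigma$ of a section $\sigma$ of $S\to\PP^{1}$ under the two projections $X_{\ban}\to S\times_{\PP^{1}}S\to S$, together with the strict transform $\widetilde{\Delta}\subset X_{\ban}$ of the diagonal $\Delta$. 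Using that each of the two projections contracts exactly one banana curve and carries the other two with degree one onto the nodal fiber of $S$, and that $\widetilde{\Delta}$ meets the configuration, a short local computation shows that the $3\times3$ matrix of intersection numbers $\bigl(D_{a}\cdot C_{i}\bigr)$ is non-singular; in particular $\operatorname{rk}\Gamma\geq3$.

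Next I would show that the class of every curve contained in a fiber of $\pi$ is an integer combination of $[C_{1}],[C_{2}],[C_{3}]$; since $\Gamma$ is generated by such classes, this together with the first step gives $\Gamma=\ZZ[C_{1}]+\ZZ[C_{2}]+\ZZ[C_{3}]\cong\ZZ^{3}$. For a banana fiber $F_{\ban}$ the assertion is purely toric: by Proposition~\ref{prop: toric description of FhatBan} the normalization $F^{\norm}_{\ban}\cong\Bl(\PP^{1}\times\PP^{1})$ is a degree-$6$ del Pezzo surface, its six $(-1)$-curves form its toric boundary and hence generate the monoid of effective curve classes, and the \'etale gluing identifies those six curves in three pairs with $C_{1},C_{2},C_{3}$; pushing forward, every effective curve on $F_{\ban}$ has class a non-negative integer combination of $[C_{1}],[C_{2}],[C_{3}]$. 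For a generic smooth fiber, $\operatorname{NS}(E\times E)=\ZZ\langle F_{1},F_{2},\Delta_{E}\rangle$ with $F_{1}=E\times\{\mathrm{pt}\}$ and $F_{2}=\{\mathrm{pt}\}\times E$, and each of these three classes is constant in its natural family over $\PP^{1}$, hence equals the class of its flat limit inside a banana fiber $F_{\ban}$ — an effective $1$-cycle there, and so once more a non-negative integer combination of the $[C_{i}]$. Therefore every curve class supported on a generic fiber already lies in $\ZZ\langle C_{1},C_{2},C_{3}\rangle$.

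It remains to rule out a contribution from the countably many non-generic fibers — over the complex-multiplication points of $\PP^{1}$ the fiber $E\times E$ has $\operatorname{NS}$ of rank $4$, and there are other sporadic jumps. Here I would invoke the a priori bound $\operatorname{rk}\Gamma\leq3$. One way to get it is the global invariant cycle theorem for $\pi$: the monodromy representation on $H^{2}(E\times E)=\Sym^{2}V\oplus\QQ^{3}$ (with $V$ the standard rank-$2$ monodromy representation of the elliptic surface $S$) has invariant part exactly the trivial summand $\QQ^{3}$, so the image of $H_{2}$ of any fiber in $H_{2}(X_{\ban},\QQ)$ is at most $3$-dimensional. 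Alternatively, in the spirit of \S~\ref{sec: geom of ban man}, one can use the conifold transition $X_{\Sch}\rightsquigarrow X_{\ban}$: the Schoen threefold fibers over $\PP^{1}$ with generic fiber a product of \emph{non-isogenous} elliptic curves, so its lattice of fiber classes has rank $2$, and passing to $X_{\ban}$ raises $b_{2}$ by exactly $1$ (Proposition~\ref{prop: h11(Xban)=20 h21(Xban)=8}), the single new class being represented by the twelve exceptional curves. Granting $\operatorname{rk}\Gamma=3$, any extra fiber curve class lies in the finite-index overlattice spanned by the $C_{i}$, and specializing it to a banana fiber and applying the del Pezzo description once more places it in $\ZZ\langle C_{1},C_{2},C_{3}\rangle$ itself, which finishes the proof.

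The step I expect to be the genuine obstacle is the integral bookkeeping in the second paragraph: pinning down which $(-1)$-curves of $F^{\norm}_{\ban}$ get glued together, so that they are identified with $\{C_{1},C_{1},C_{2},C_{2},C_{3},C_{3}\}$, and verifying that the specialization of the basis $\{F_{1},F_{2},\Delta_{E}\}$ of $\operatorname{NS}(E\times E)$ to the banana configuration is a change of basis in $\mathrm{GL}_{3}(\ZZ)$ and not merely in $\mathrm{GL}_{3}(\QQ)$. This is precisely the information that the explicit toric and formal-neighborhood description of $\Fhat_{\ban}$ in Proposition~\ref{prop: toric description of FhatBan} is meant to supply; the rank bound by itself would only determine $\Gamma$ up to finite index.
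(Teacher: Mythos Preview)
Your approach differs substantially from the paper's. Rather than working fiber-by-fiber over $\PP^{1}$, the paper exploits the two projections $p_{1},p_{2}\colon X_{\ban}\to S$ coming from the fibered-product description. The fibers of each $p_{j}$ are elliptic curves forming a connected family over $S$, hence all homologous; the twelve reducible fibers (over the nodes of $S$) are $C_{j}(i)\cup C_{3}(i)$. Combined with the conifold-transition isomorphism $H_{2}(X_{\ban},\ZZ)\cong H_{2}(X_{\Sch},\ZZ)\oplus\ZZ$, whose extra $\ZZ$ is generated by the mutually homologous exceptional curves $C_{3}(i)$, this gives at once that all $C_{j}(i)$ for fixed $j$ are homologous and that $C_{1},C_{2},C_{3}$ span $\Gamma$. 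No analysis of individual fibers of $\pi$ is needed, and in particular the CM-fiber issue never arises.

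Your third paragraph contains a genuine gap. The ``specialization to a banana fiber'' of a curve $C$ lying in a CM fiber $E_{0}\times E_{0}$ is not a well-defined operation: if $[C]$ is a Noether--Lefschetz class --- one not in the image of the generic N\'eron--Severi group --- then $C$ does not extend to a flat family of curves over any neighborhood of the CM point in $\PP^{1}$, so there is nothing to take a flat limit of. A homology class in $H_{2}(X_{\ban})$ does not ``move'' from one fiber to another. The rank-$3$ bound you obtain from the invariant cycle theorem or from the conifold transition is a rational statement and, as you correctly flag, pins down $\Gamma$ only up to finite index. To close this within your framework you would need to show that the intersection matrix $(D_{a}\cdot C_{i})$ from your first paragraph has determinant $\pm1$, not merely that it is non-singular; that would force any integral class in $\QQ\langle C_{1},C_{2},C_{3}\rangle$ to lie in $\ZZ\langle C_{1},C_{2},C_{3}\rangle$, and would in fact render the entire fiber-by-fiber discussion (del Pezzo analysis, generic-fiber specialization, CM fibers) superfluous.
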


\begin{proof}
By the previous discussion of the conifold transition, we have that 
\[
H_{2}(X_{\ban},\ZZ )\cong H_{2}(X_{\Sch},\ZZ )\oplus \ZZ 
\]
where the $\ZZ$ factor is spanned by the exceptional curves of the
conifold resolution, in particular, the exceptional curves are all
homologous. Let 
\[
p_{1},p_{2}:X_{\ban} \to S
\]
be the projections on to the first and second factors of the fibered
product. Then 
\[
\Gamma = \Ker (p_{1})_{*} \cup  \Ker (p_{2})_{*} .
\]
This follows since if a connected algebraic 1-cycle in $X_{\ban}$ maps to a
point in $\PP^{1}$, then by properties of the fiber product, it either
maps to a point under $p_{1}$ or $p_{2}$. Moreover, it is enough to
consider algebraic cycles since $H_{2}(X_{\ban},\ZZ )\cong
H^{4}(X_{\ban},\ZZ )\subset H^{2,2}(X_{\ban})  $ has no torsion and the
Hodge conjecture holds for threefolds.

Let $x_{1},\dotsc ,x_{12}\in \PP^{1}$ be points corresponding to
singular fibers and we label the banana curves
$C_{1}(i),C_{2}(i),C_{3}(i)$ in the singular fiber over $x_{i}$ such
that $C_{3}(i)$ is an exceptional curve for the conifold resolution
and $C_{1}(i)$ and  $C_{2}(i)$ are such that
\[
p_{j}^{-1}(p_{j}(C_{3}(i))) = C_{j}(i)\cup C_{3}(i), \quad j=1,2.
\]
The fibers of $p_{j}:X_{\ban}\to S$ are all irreducible except
for the fibers $C_{j}(i)\cup C_{3}(i)$ for $i=1,\dotsc ,12$. Since the
exceptional curves are all homologous, we have a single class
$C_{3}=C_{3}(i)$ for all $i$ and since the fibers of $p_{j}$ are all
homologous we get
\[
C_{1}(i)+C_{3} = C_{1}(i')+C_{3},\text{ and }   C_{2}(i)+C_{3} =
C_{2}(i')+C_{3} .
\]
Hence we all $C_{1}(i)$ are homologous to a single class $C_{1}$ and
all $C_{2}(i)$ are homologous to a single class $C_{2}$ and $\Gamma$
is spanned by $C_{1},C_{2},C_{3}$.
   
\end{proof}

\subsection{Proof of Proposition~\ref{prop: toric description of
FhatBan}.}\label{subsec: proof of prop about FhatBan}

Let $C_{\sing}\subset S$ be a singular fiber of $S\to \PP^{1}$ and let
$\othermark{C}_{\sing}\subset \othermark{S}$ be an isomorphic copy so
that 
\[
X_{\sing}=S\times_{\PP^{1}}\othermark{S}\text{ and
}F_{\sing}=C_{\sing}\times \othermark{C}_{\sing}.
\]
$C_{\sing}$ is a nodal rational curve whose normalization $\PP^{1}\to
C_{\sing}$ identifies the points $0,\infty \in \PP^{1}$ to the nodal
point $n\in C_{\sing}$. Thus $F^{\norm}_{\sing}$ is isomorphic to
$\PP^{1}\times \PP^{1}$ and the map $F^{\norm}_{\sing}\to F_{\sing}$
identifies $0\times \PP^{1}$ and $\infty \times \PP^{1}$ to $n\times
\othermark{C}_{\sing}$ and identifies $\PP^{1}\times 0$ and
$\PP^{1}\times \infty$ to $C_{\sing}\times \othermark{n}$. Note that
this map is $\CC^{*}\times \CC^{*}$ equivariant with respect to the
usual toric action on $\PP^{1}\times \PP^{1}$ and our constructed
action on $F_{\sing}$.

The formal neighborhood $\Fhat_{\ban}$ is obtained from $\Fhat_{\sing}$
by blowing up the diagonal $\Delta \subset \Fhat_{\sing}$. The blow
down $\Fhat_{\ban}\to \Fhat_{\sing}$ contracts the exceptional
$\PP^{1}$ to the conifold point $n\times \othermark{n}$ and is an
isomorphism elsewhere. We study this blowup and the normalizations in
formal local coordinates about $n\times \othermark{n}$:

Let $(x,y)$ and $(\othermark{x},\othermark{y})$ be formal local
coordinates on $S$ and $\othermark{S}$ about the points $n$ and
$\othermark{n}$ such that the maps $S\to \PP^{1}$ and
$\othermark{S}\to \PP^{1}$ are given by $t=xy$ and
$t=\othermark{x}\othermark{y}$ where $t$ is a formal local coordinate
on $\PP^{1}$. Moreover, we may choose the coordinates so that the
action of $(\lambda ,\othermark{\lambda})\in \CC^{*}\times \CC^{*}$ on
$\Fhat_{\sing}$ is given by
\[
(x,y,\othermark{x},\othermark{y})\mapsto (\lambda
x,\lambda^{-1}y,\othermark{\lambda}\othermark{x},(\othermark{\lambda})^{-1}\othermark{y}).
\]
Then the formal neighborhood of $n\times \othermark{n}\in
\Fhat_{sing}$ is isomorphic to 
\[
\left\{xy=\othermark{x}\othermark{y} \right\} \subset \Spec \CC
[[x,y,\othermark{x},\othermark{y}]] 
\]
where the closed fiber $F_{\sing}\subset \Fhat_{\sing}$ is given by
\[
\left\{xy=\othermark{x}\othermark{y}=0 \right\} .
\]
The blowup of $\Fhat_{\sing}$ along the diagonal $\Delta
=\{x=\othermark{x},y=\othermark{y} \}$ is {canonically} isomorphic to
the blowup along any of the planes\footnote{There are canonically two
small resolutions of the conifold singularity
$xy=\othermark{x}\othermark{y}$. These two are given by blowing up any
plane given by affine cone over a line in one of the two rulings of
the quadric surface $\{xy=\othermark{x}\othermark{y} \}\subset
\PP^{3}$. }
\[
\left\{ax=\othermark{a}\othermark{x},\othermark{a}y
=a\othermark{y}\right\}, \quad \quad (a:\othermark{a})\in \PP^{1}.
\]
Choosing $(a:\othermark{a})=(1:0)$, we get two affine toric charts for
the blow up with coordinate rings given by
\begin{align*}
\CC
[[x,\othermark{x},y,\othermark{y}]][u]/(x-u\othermark{y},\othermark{x}-uy)&\cong
\CC [[y,\othermark{y}]][u],\\
\CC
[[x,\othermark{x},y,\othermark{y}]][v]/(y-v\othermark{x},\othermark{y}-vx)&\cong
\CC [[x,\othermark{x}]][v].
\end{align*}
The coordinate change between the charts is given by
\[
u=v^{-1},\quad y=v\othermark{x}, \quad \othermark{y}=vx
\]
and the induced $(\lambda ,\othermark{\lambda})$ action is given by
\[
(\lambda x,\othermark{\lambda}\othermark{x},(\lambda
\othermark{\lambda})^{-1}v),\quad
(\lambda^{-1}y,(\othermark{\lambda})^{-1}\othermark{y},\lambda
\othermark{\lambda}u).
\]

The coordinates on the blowup of the formal neighborhood of $n\times
\othermark{n}\in \Fhat_{\sing}$ and the corresponding blowdown are 
encoded in the following momentum ``polytopes'' where the coordinate
lines are labelled by their corresponding variables:

\begin{tikzpicture}[scale=0.7]\label{Fig: momentum polytopes of Fsing
and Fban}
\draw (16,8)--(14,8)--(14,10);
\node [right] at(16,8) {$x$};
\node [above] at (14,10 ) {$\othermark{x}$};
\node [left] at (13.8,7.8) {$v$};
\draw (14,8)--(12,6)--(12,4);
\draw (12,6)--(10,6);
\node [below] at(12,4) {$\othermark{y}$};
\node [left] at (10,6) {$y$};
\node [right] at (12.2,6.2) {$u$};
\draw [->](9,7.5)--(7,7.5);
\draw (5,7.5)--(1,7.5);
\node [right] at(5,7.5) {$x$};
\node [left] at(1,7.5) {$y$};
\draw (3,5.5)--(3,9.5);
\node [above] at(3 ,9.5 ) {$\othermark{x}$};
\node [below] at(3 ,5.5 ) {$\othermark{y}$};
\end{tikzpicture}
\vspace{1cm}

The $x$ and $y$ coordinates about $n\in S$ (which correspond to the
two branches of the node in $C_{\sing}$) become local coordinates near
$0$ and $\infty$ in $\PP^{1}$, the normalization of $C_{\sing}$. 
  
Thus $F^{\norm}_{\sing}$ and $F^{\norm}_{\ban}$, when endowed with
local coordinate rings given by $\sigma^{*}\Ohat_{\Fhat_{\sing}}$ and
$\tau^{*}\Ohat_{\Fhat_{\ban}}$ respectively, have momentum polytopes
given by:
\bigskip

\begin{tikzpicture}[scale=0.7]
\draw (2,0)--(2,2)--(0,2);
\node [below right] at(2.5,2) {$x$};
\node [above left] at(2,2.5) {$\othermark{x}$};
\node [below] at(2,0) {$\othermark{y}$};
\node [left] at(0,2) {$y$};
\draw (2,2)--(2,6)--(6,6)--(6,2)--(2,2);
\draw (8,6)--(6,6)--(6,8);
\node [right] at(8,6) {$x$};
\node [above] at(6,8) {$\othermark{x}$};
\node [below right] at(6,5.5) {$\othermark{y}$};
\node [above left] at(5.5,6) {$y$};
\draw (0,6)--(2,6)--(2,8);
\node [left] at(0,6) {$y$};
\node [above] at(2,8) {$\othermark{x}$};
\node [below left] at(2,5.5) {$\othermark{y}$};
\node [above right] at(2.5,6) {$x$};
\draw (6,0)--(6,2)--(8,2);
\node [below] at(6,0) {$\othermark{y}$};
\node [right] at(8,2) {$x$};
\node [above right] at(6,2.5) {$\othermark{x}$};
\node [below left] at(5.5,2) {$y$};
\draw [<-](9,4)--(10,4);
\draw (13,2)--(13,4)--(15,6)--(17,6)--(17,4)--(15,2)--(13,2);
\draw (13,2)--(12.5,1.5);
\draw (12.5,1.5)--(12,1);
\node [below left] at (12,1) {$v$};
\node [below right] at(13,2) {$x$};
\node [above left] at(13.2,2) {$\othermark{x}$};
\draw (13,4)--(12,4);
\node [left] at(12,4) {$y$};
\node [above] at(13,4) {$u$};
\node [below right] at(13,4) {$\othermark{y}$};
\draw (15,6)--(15,7);
\node [above] at(15,7) {$\othermark{x}$};
\node [below right] at (15,6) {$x$};
\node [left] at(15,6) {$v$};
\draw (17,6)--(17.5,6.5);
\draw (17.5,6.5)--(18,7);
\node [above right] at(18,7) {$u$};
\node [below right] at(17,6) {$\othermark{y}$};
\node [above left] at(17,6) {$y$};
\draw (17,4)--(18,4);
\node [right] at(18,4) {$x$};
\node [above left] at(17.2,4) {$\othermark{x}$};
\node [below] at(17,4) {$v$};
\draw (15,2)--(15,1);
\node [below] at(15,1) {$\othermark{y}$};
\node [right] at(15,2) {$u$};
\node [above left] at(15,2) {$y$};
\end{tikzpicture}

\bigskip

We see that 
\[
F^{\norm}_{\ban}\cong \Bl_{(0,0)(\infty ,\infty
)}(\PP^{1}\times \PP^{1}),\quad \quad \Fhat^{\norm}_{\ban}\cong
\widehat{\Bl( \PP^{1}\times \PP^{1})}
\]
as asserted.

\section{BPS invariants from a Donaldson-Thomas partition function in
product form.}\label{sec: BPS invariants from product DT partition function}

The Gopakumar-Vafa invariants (a.k.a. BPS invariants) $n_{\beta
}^{g}(X)$ can be defined in terms of Gromov-Witten or
Donaldson-Thomas invariants and (conjecturally) have better finiteness
properties. Unlike the Donaldson-Thomas invariants, it is expected
that there are only a finite number of non-zero Gopakumar-Vafa
invariants for each curve class $\beta$.

One definition of Gopakumar-Vafa invariants, which is equivalent
to the usual one given in terms of Gromov-Witten invariants via the
Gopakumar-Vafa formula, is the following:
\begin{def-theorem}\label{def-thm: GV invariants in terms of DT
partition fnc}
Let $X$ be any Calabi-Yau threefold and suppose that the 
Donaldson-Thomas partition function  is given by
\begin{equation}\label{eqn: product formulation of Z(X)}
Z^{\DTss}(X) = \prod_{ \beta \in H_{2}(X) } \prod_{k\in \ZZ} \left(1-p^{k}Q^{\beta} \right)^{-a(\beta ,k)}
\end{equation}
for some integers $a(\beta ,k)\in \ZZ$. Then for $\beta\neq 0 \in H_2(X)$
and $g\in \ZZ$ the Gopakumar-Vafa invariants $n^{g}_{\beta}(X)$ can be
defined by the formula
\[
\sum_{g} n^{g}_{\beta}(X)\, \left(y^{\half} + y^{-\half} \right)^{2g}
=  \left(y^{\half} + y^{-\half} \right)^{2} \,\sum_{k\in \ZZ} a(\beta
,k) \, (-y)^{k}. 
\]
This is equivalent, assuming the Gromov-Witten/Donaldson-Thomas
correspondence, to defining $n^{g}_{\beta}(X)$ in terms of the
Gromov-Witten invariants via the Gopakumar-Vafa formula.
\end{def-theorem}

\begin{remark}
It is expected that $n^{g}_{\beta}(X)$ is zero unless $0\leq g\leq
g_{\mathsf{max}}$ where $g_{\mathsf{max}}$ is the maximal arithmetic
genus of curves in the class $\beta$. Note that the left hand side of
the above formula is a palandromic polynomial in $y$ (i.e. invariant
under $y \leftrightarrow y^{-1}$). One can see that the invariants
$n^{g}_{\beta}(X)$ are well-defined by the above definition as
follows. Any power series $Z\in \ZZ ((p))[[Q]]$ can be uniquely
written in the form of equation~\eqref{eqn: product formulation of
Z(X)} for some collection of integers $a(\beta ,k)$. By a theorem of
Bridgeland \cite{Bridgeland-PTDT}, the right hand side of
equation~\eqref{eqn: product formulation of Z(X)} is a rational
function of $y$, invariant under $y\leftrightarrow y^{-1}$, possibly
having poles at $y=-1$. Such functions have a basis given by the
functions
\[
\left(y^{\half}+y^{-\half} \right)^{2g} = y^{-g}(1+y)^{2g}, \quad g\in \ZZ 
\]
and so the left hand side of \eqref{eqn: product formulation of Z(X)}
is well defined and uniquely determines $n^{g}_{\beta}(X)$. The
conjecture that $n_{\beta}^{g}(X)=0$ if $g<0$ is equivalent to the
right hand side of \eqref{eqn: product formulation of Z(X)} not having
a pole.
\end{remark}

\begin{proof}
The Gopakumar-Vafa formula \cite{Go-Va} expresses the reduced
Gromov-Witten potential function $F'(X)$ of a Calabi-Yau threefold
$X$ in terms of conjecturally integer invariants $n_{\beta}^{g}(X)$,
which are commonly called Gopakumar-Vafa invariants, or BPS
invariants:

\[
F'(X) =\sum_{g\geq 0}\sum_{\beta \neq 0} GW^{g}_{\beta}(X)
\lambda^{2g-2} Q^{\beta} = \sum_{g\geq 0}\sum_{\beta \neq 0}\sum_{m>0} n_{\beta}^{g}(X)
\frac{1}{m}\left(2\sin \frac{m\lambda}{2} \right)^{2g-2} Q^{m\beta }.
\]

One can re-express the above formula in terms of the partition
function $Z'(X) = \exp\left(F'(X) \right)$ as follows\footnote{This expression essentially appears
in \cite[eqn~18]{Katz-Snowbird}, although our convention for $\binom{n}{k}$ allows us
to write the formula more uniformly.} 
:
\[
Z'(X) = \prod_{\beta \neq 0} \prod_{m\in \ZZ} \left(1-Q^{\beta}p^{m}
\right)^{-\sum_{g\geq 0} n^{g}_{\beta}(X) \binom{2g-2}{g-1-m}(-1)^{m}}
\]
where $p=\exp\left(i\lambda \right)$ and
$\binom{n}{k}=\frac{n(n-1)\dotsb (n-k+1)}{k!}$ is defined for all
$k\geq 0$ and $n\in \ZZ$.

The Donaldson-Thomas partition function can always be written in the
following form
\[
Z^{\DTss}(X) = \prod_{\beta}\prod_{m\in \ZZ} \left(1-Q^{\beta}p^{m} \right)^{-c(\beta ,m)}
\]
for some $c(\beta ,m)\in \ZZ $. Then for $\beta \neq 0$, 
\[
c(\beta ,m) = \sum_{n=0}^{\beta}n_{\beta}^{g}(X) \binom{2g-2}{g-1-m}(-1)^{m}.
\]  
The Definition-Theorem is then  an easy consequence of the binomial theorem.

\end{proof}

\bigskip


\appendix
\section{The Gromov-Witten potentials are Siegel modular
forms (with Stephen Pietromonaco)}\label{Appendix: GW potentials are Siegel modular forms}

\subsection{Overview.}
Let $F_{g}(Q_{1},Q_{2},Q_{3})$ be the genus $g\geq 2$ Gromov-Witten
potential for fiber classes in $X_{\ban}$. Namely, let
\[
F_{g}(Q_{1},Q_{2},Q_{3})  = \sum_{d_{1},d_{2},d_{3}\geq 0}
GW^{g}_{\dvec } (X_{\ban})\, Q_{1}^{d_{1}} Q_{2}^{d_{2}} Q_{3}^{d_{3}}
\]
where $GW^{g}_{\dvec} (X_{\ban})$ is the genus $g$
Gromov-Witten invariant of $X_{\ban}$ in the class
$\beta_{\dvec}  = d_{1}C_{1}+d_{2}C_{2}+d_{3}C_{3}$.

Assuming that the Gromov-Witten/Donaldson-Thomas correspondence
conjectured in \cite{MNOP1} holds, we may compute
$F_{g}(Q_{1},Q_{2},Q_{3})$ using the formula for the Donaldson-Thomas
partition function derived in the main text. The main result of this
appendix is that $F_{g}$ is an explicit genus 2 Siegel modular form of
weight $2g-2$.

\begin{definition}\label{defn: genus 2 Siegel modular form}
Let $\Omega  = \left(\begin{smallmatrix} \tau &z\\
z&\sigma \end{smallmatrix} \right)\in \mathbb{H}_{2}$ be the standard
coordinates on the genus 2 Siegel upper half plane. A holomorphic
(resp. meromorphic) genus 2 Siegel modular form of weight $k$ is a
holomorphic (resp. meromorphic) function $F(\Omega )$ satisfying
\[
F\left((A\Omega +B)(C\Omega +D)^{-1} \right) =
\operatorname{det}\left( (C\Omega +D)^{k} \right)F(\Omega )
\]
for all $\left(\begin{smallmatrix} A&B\\ C&D \end{smallmatrix}
\right)\in Sp_{4}(\ZZ )$ (c.f. \cite{Eichler-Zagier}). We denote the
space of meromorphic genus 2 Siegel forms of weight $k$ by
$\mathsf{Siegel}_{k}$. 
\end{definition}

As we will detail in \S~\ref{subsec: modular forms and lifts}, a
standard way to construct a genus 2 Siegel modular form of weight
$2g-2$ is to take the so-called Maass lift of a Jacobi form of weight
$2g-2$ and index 1. Moreover, such Jacobi forms are easily obtained by
taking a modular form of weight $2g$ and multiplying it by
$\phi_{-2,1}$, the unique weak Jacobi form of weight $-2$ and index
1. Some authors call this the Skoruppa lift of the modular
form. Schematically we have

\[
\begin{tikzcd}[column sep = large]
\Mod_{2g} \arrow[r,"\text{Skoruppa}"]&
\Jac_{2g-2,1} \arrow[r,"{\text{Maass}}"] &
\Siegel_{2g-2}
\end{tikzcd}
\] 
We call the composition the Skoruppa-Maass lift. It takes weight $2g$
modular forms to genus 2, weight $2g-2$, Siegel modular
forms\footnote{This lift is different from the famous Saito-Kurokawa
lift which also takes $\Mod_{2g}$ to $\Siegel_{2g-2}$. While both use
the Maass lift, the Saito-Kurokawa lift uses a combination of the
Shimura correspondence and a lift studied by Eichler-Zagier to go from
$\Mod_{2g}$ to $\Jac_{2g-2,1}$ \cite{Eichler-Zagier}.}. Our main
result is:
\begin{theorem}\label{thm: Fg are Siegel mod forms}
Assume that the Gromov-Witten/Donaldson-Thomas correspondence holds
for $X_{\ban}$. Then for $g\geq 2$, the genus $g$ Gromov-Witten
potentials $F_{g}(Q_{1},Q_{2},Q_{3})$ of $X_{\ban}$ are meromorphic
genus 2 Siegel modular forms of weight $2g-2$ where
\[
Q_{1} = e^{2\pi i\,z},\quad Q_{2} = e^{2\pi i(\tau -z)},\quad Q_{3} =
e^{2\pi i(\sigma -z)} .
\]
Specifically, $F_{g}$ is the Skoruppa-Maass lift of $a_{g}E_{2g}(\tau
)$, the $2g$-th Eisenstein series times the constant
$a_{g}=\frac{6|B_{2g}|}{g(2g-2)!}$ where $B_{2g}$ is the
$2g$-th Bernoulli number. 
\end{theorem}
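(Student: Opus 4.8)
The plan is to turn the closed product formula for $Z^{\DTss}_{\Gamma}(X_{\ban})$ into a closed formula for each $F_{g}$, recognise the result as the Maass (Gritsenko) lift of an explicit weak Jacobi form, and identify that Jacobi form as $a_{g}E_{2g}(\tau)\,\phi_{-2,1}(\tau,z)$. Throughout, the GW/DT correspondence is assumed; I write $q=e^{2\pi i\tau}$, $\zeta=e^{2\pi i z}$, $\mathfrak{p}=e^{2\pi i\sigma}$, so $Q_{1}=\zeta$, $Q_{2}=q\zeta^{-1}$, $Q_{3}=\mathfrak{p}\zeta^{-1}$, and I set $C_{a}(t):=\sum_{k\in\ZZ}c(a,k)t^{k}$, so that $\sum_{a}C_{a}(t)Q^{a}$ is the theta quotient of Proposition~\ref{prop: generating function for c(a,k) =
c(4n-l^2,k) =c(n,l,k)}. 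By the GW/DT correspondence the reduced genus expansion $\sum_{g}\lambda^{2g-2}F'_{g}$ equals $\log$ of the reduced partition function $Z^{\DTss}_{\Gamma}(X_{\ban})/Z^{\DTss}_{0}$, with $p=e^{i\lambda}$ as in Definition-Theorem~\ref{def-thm: GV invariants in terms of DT
partition fnc}.

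Taking the logarithm of the product in Theorem~\ref{thm: formula for Zban}, expanding $-\log(1-x)=\sum_{j\ge1}x^{j}/j$, and collecting the coefficient of $Q^{\dvec}$ (a term $Q^{\avec}p^{k}$ contributing with multiplicity $j$ at $Q^{\dvec}$, $\dvec=j\avec$) gives, using that $||\cdot||$ is quadratic,
\[
\sum_{g}\lambda^{2g-2}F'_{g}=12\sum_{\dvec\neq\zerovec}\ \sum_{j\mid\dvec}\ \frac{1}{j}\,C_{||\dvec||/j^{2}}\bigl(e^{ij\lambda}\bigr)\,Q^{\dvec}.
\]
Reindexing by $(n,r,m)=(d_{2},\,d_{1}-d_{2}-d_{3},\,d_{3})$, one has $Q^{\dvec}=q^{n}\zeta^{r}\mathfrak{p}^{m}$ and, as in \S\ref{sec: vertex calculation.}, $||\dvec||=4nm-r^{2}$, while $j\mid\dvec$ becomes $j\mid\gcd(n,r,m)$ (the condition $d_{1}\ge 0$ is automatic once $4nm-r^{2}\ge -1$). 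Since $[\lambda^{2g-2}]C_{a}(e^{ij\lambda})=j^{2g-2}[\lambda^{2g-2}]C_{a}(e^{i\lambda})$, taking the coefficient of $\lambda^{2g-2}$ yields
\[
F'_{g}=\sum_{\substack{n,m\ge 0,\ r\in\ZZ\\ (n,r,m)\ne(0,0,0)}}\Biggl(\ \sum_{j\mid\gcd(n,r,m)}j^{2g-3}\,b_{g}\!\Bigl(\tfrac{4nm-r^{2}}{j^{2}}\Bigr)\Biggr)q^{n}\zeta^{r}\mathfrak{p}^{m},\qquad b_{g}(a):=12\bigl[\lambda^{2g-2}\bigr]C_{a}(e^{i\lambda}),
\]
the bracket denoting the coefficient of $\lambda^{2g-2}$ in the Laurent expansion at $\lambda=0$ (by Corollary~\ref{cor: c(-1,k) and c(0,k)}, $C_{-1}(t)=-t/(1-t)^{2}$, so $C_{-1}(e^{i\lambda})=(2\sin\tfrac{\lambda}{2})^{-2}$ genuinely has a pole there). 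Adding the constant-map term $GW^{g}_{\zerovec}$ and comparing with the Maass lift recalled in \S\ref{subsec: modular forms and lifts}, this is precisely $F_{g}=\mathrm{Maass}(\phi_{g})$ for the index-$1$ weak Jacobi form candidate $\phi_{g}(\tau,z):=\sum_{n,r}b_{g}(4n-r^{2})q^{n}\zeta^{r}$ (note $b_{g}(a)=0$ for $a<-1$ by Proposition~\ref{prop: generating function for c(a,k) =
c(4n-l^2,k) =c(n,l,k)}), the Hecke exponent $j^{2g-3}=j^{(2g-2)-1}$ being the one correct for lifting weight $2g-2$; that $GW^{g}_{\zerovec}$ matches the constant term of the lift is an elementary Bernoulli-number identity using $e(X_{\ban})=24$.

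It then remains to prove $\phi_{g}=a_{g}E_{2g}(\tau)\phi_{-2,1}(\tau,z)$, which is the asserted Skoruppa--Maass lift. In the theta decomposition $\phi_{g}=h_{g,0}(\tau)\vartheta_{0}(\tau,z)+h_{g,1}(\tau)\vartheta_{1}(\tau,z)$, with $\vartheta_{\mu}(\tau,z)=\sum_{r\equiv\mu\,(2)}q^{r^{2}/4}\zeta^{r}$, the components $h_{g,\mu}$ are (up to the explicit constant $12/(2\pi)^{2g-2}$) the $(2g-2)$-nd development coefficients, in the elliptic variable, of the meromorphic Jacobi form $\vartheta_{4}(2\tau,z)/\vartheta_{1}(4\tau,z)^{2}$ of Proposition~\ref{prop: generating function for c(a,k) =
c(4n-l^2,k) =c(n,l,k)}, split into its two residues modulo the index. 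I would then use the classical $z$-expansion of $\vartheta_{1}(\tau,z)^{-2}$ (equivalently of the Weierstrass $\wp$), whose coefficients are modular forms built from the Eisenstein series $E_{2k}(\tau)$, together with the Eichler--Zagier structure of the development coefficients of Jacobi forms, to conclude that $\phi_{g}=a_{g}E_{2g}\phi_{-2,1}$ up to an overall constant. That constant is forced by the discriminant-$(-1)$ coefficient $b_{g}(-1)=12[\lambda^{2g-2}](2\sin\tfrac{\lambda}{2})^{-2}$, which, via $\tfrac{\lambda}{2}\cot\tfrac{\lambda}{2}=\sum_{n\ge0}\tfrac{(-1)^{n}B_{2n}}{(2n)!}\lambda^{2n}$, equals $\tfrac{12(2g-1)|B_{2g}|}{(2g)!}=\tfrac{6|B_{2g}|}{g(2g-2)!}=a_{g}$ --- exactly the discriminant-$(-1)$ coefficient of $a_{g}E_{2g}\phi_{-2,1}$; a check at discriminant $0$ gives $b_{g}(0)=-2a_{g}$, consistent with $\phi_{-2,1}$, and shows why one needs $g\ge 2$: at $g=1$ the multiplier would be the merely quasimodular $E_{2}$. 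Finally, since $\phi_{-2,1}$ (hence $\phi_{g}$) is a \emph{weak} Jacobi form with a nonzero polar coefficient (at discriminant $-1$), its additive lift is a \emph{meromorphic} Siegel modular form of weight $2g-2$, with poles along the discriminant-$1$ Humbert surface $\{z=0\}$ and its $Sp_{4}(\ZZ)$-translates.

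The hard part is this last identification. Everything up to $F_{g}=\mathrm{Maass}(\phi_{g})$ is bookkeeping once GW/DT is granted, but two things require real work. First, one must show that the formally defined $\phi_{g}$ is a genuine weak Jacobi form --- i.e.\ control how the extraction of Laurent coefficients of the \emph{meromorphic} theta quotient $\vartheta_{4}(2\tau,z)/\vartheta_{1}(4\tau,z)^{2}$ interacts with the $SL_{2}(\ZZ)$-action; the delicate point is the quasimodular anomaly carried by the double pole of $\vartheta_{1}^{-2}$ at $z=0$, which must cancel against the numerator so that the honestly modular $E_{2g}$ remains as multiplier (again forcing $g\ge 2$). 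Second, one must pin the multiplier to precisely $E_{2g}$ rather than $E_{2g}$ plus a cusp form, and kill any $\phi_{0,1}$-component: the disc-$(-1)$ and disc-$0$ coefficients already force the $\phi_{0,1}$-component to vanish and the constant term of the multiplier to be $a_{g}$, and the rest follows from the exact divisor-sum formula for $b_{g}(a)$ that one reads off from the theta quotient, compared with the Eisenstein coefficients of $E_{2g}\phi_{-2,1}$ (equivalently, showing the relevant development coefficients span over $\QQ$ the expected two-dimensional space of vector-valued modular forms of weight $2g-\tfrac52$ for the Weil representation of the index-$1$ theta decomposition). Once this is done, Siegel-modularity of $F_{g}$ is immediate from the general properties of the Maass lift in \S\ref{subsec: modular forms and lifts}.
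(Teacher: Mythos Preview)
Your argument tracks the paper's exactly through the point where $F_{g}$ is recognised as the Maass lift of the index~$1$ object $\phi_{g}(q,y)=\sum_{n,l}b_{g}(4n-l^{2})q^{n}y^{l}$ with $b_{g}(a)=12[\lambda^{2g-2}]\sum_{k}c(a,k)e^{ik\lambda}$; this is precisely the paper's $12\psi_{2g-2}$, obtained via the same reindexing $(d_{1},d_{2},d_{3})\leftrightarrow(n,l,m)$ and the same polylog/Hecke bookkeeping (Lemma~\ref{lem: Maass  lift in terms of polylogs}).

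The divergence is in how you propose to identify $\phi_{g}=a_{g}E_{2g}\phi_{-2,1}$. The paper does not attack this via the theta quotient of Proposition~\ref{prop: generating function for c(a,k) = c(4n-l^2,k) =c(n,l,k)} at all; instead it notes that by definition $\phi_{g}=12\,[\lambda^{2g-2}]\,\Ell_{q,y}(\CC^{2},e^{i\lambda})$ and then invokes Zhou's theorem giving the closed $\lambda$-expansion
\[
\Ell_{q,y}(\CC^{2},e^{i\lambda})=\lambda^{-2}\phi_{-2,1}(q,y)\Bigl(1+\wp(q,y)\lambda^{2}+\sum_{g\ge2}\tfrac{|B_{2g}|}{2g(2g-2)!}E_{2g}(q)\lambda^{2g}\Bigr),
\]
which makes $\phi_{g}=a_{g}E_{2g}\phi_{-2,1}$ immediate for $g\ge2$. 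This sidesteps everything you flagged as ``the hard part'': there is no need to prove $\phi_{g}$ is a weak Jacobi form a posteriori, no quasimodular anomaly to chase, and no cusp-form ambiguity to kill, because the identity is an exact equality of functions, not a matching of finitely many Fourier coefficients. Your route through the theta decomposition of $\vartheta_{4}(2\tau,z)/\vartheta_{1}(4\tau,z)^{2}$ can be made to work, but it amounts to re-deriving Zhou's expansion from the less convenient of the two theta-quotient presentations (that of Proposition~\ref{prop: generating function for c(a,k) = c(4n-l^2,k) =c(n,l,k)} rather than the elliptic-genus form $\theta_{1}(q,yt)\theta_{1}(q,yt^{-1})/\theta_{1}(q,t)\theta_{1}(q,t^{-1})$), and it conflates the modular variable of that quotient with the modular variable of $\phi_{g}$. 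The checks you performed at discriminants $-1$ and $0$ are correct and agree with the paper's normalisations.
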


The ring of holomorphic, even weight, genus 2 Siegel modular forms is
a polynomial ring generated by the Igusa cusp forms $\chi_{10}$ and
$\chi_{12}$ of weight 10 and 12, and the Siegel Eisenstein series
$\mathcal{E}_{4}$ and $\mathcal{E}_{6}$ of weight 4 and 6
\cite{Igusa-1962,Igusa-1967}. Although $F_{g}$ are meromorphic, the
denominators can be determined explicitly:

\begin{corollary}\label{cor: chi10^{g-1}Fg is holomorphic} For $g\geq
2$, the product $\chi_{10}^{g-1}\cdot F_{g}$ is a holomorphic Siegel
form of weight $12g-12$.
\end{corollary}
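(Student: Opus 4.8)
The plan is as follows. The weight claim is immediate from Theorem~\ref{thm: Fg are Siegel mod forms}: $\chi_{10}$ has weight $10$ and $F_{g}$ weight $2g-2$, so $\chi_{10}^{g-1}F_{g}$ transforms with weight $10(g-1)+(2g-2)=12g-12$; and by the Koecher principle (which applies in genus $\geq 2$) it then suffices to prove that $\chi_{10}^{g-1}F_{g}$ is holomorphic on $\mathbb{H}_{2}$, holomorphy at the boundary being automatic. So everything reduces to showing that the zeros of $\chi_{10}^{g-1}$ absorb the poles of $F_{g}$. To locate the latter, I would first observe that $(2\sin\tfrac{m\lambda}{2})^{2h-2}$ has $\lambda^{2g-2}$-coefficient equal to a constant times $m^{2g-2}$ for every $h\geq 0$, so the Gopakumar--Vafa formula in genus $g\geq 2$ collapses to
\[
F_{g}=\sum_{\dvec\neq\zerovec}\widetilde{n}_{\dvec}\,\Li_{3-2g}\big(Q_{1}^{d_{1}}Q_{2}^{d_{2}}Q_{3}^{d_{3}}\big),\qquad \widetilde{n}_{\dvec}\in\QQ,
\]
where each $\widetilde{n}_{\dvec}$ is a finite combination of the BPS invariants $n^{h}_{\betadvec}$. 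Since $3-2g\leq-1$, each summand is a rational function of $Q^{\betadvec}:=Q_{1}^{d_{1}}Q_{2}^{d_{2}}Q_{3}^{d_{3}}$ with a single pole, of order $2g-2$, at $Q^{\betadvec}=1$; hence $F_{g}$ is meromorphic on $\mathbb{H}_{2}$ with polar divisor contained in $\bigcup_{\dvec}\{Q^{\betadvec}=1\}$, the union taken over $\dvec$ with $\|\dvec\|\geq-1$ (those with a nonzero BPS invariant, by Theorem~\ref{thm: generating function for MT polys of Xban}).

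Next I would decide which of these divisors actually meet $\mathbb{H}_{2}$. Writing $Y=\operatorname{Im}\Omega$ and using $Q_{1}=e^{2\pi iz}$, $Q_{2}=e^{2\pi i(\tau-z)}$, $Q_{3}=e^{2\pi i(\sigma-z)}$, one computes $|Q^{\betadvec}|=e^{-2\pi\operatorname{tr}(YM_{\dvec})}$, where $M_{\dvec}=\left(\begin{smallmatrix} d_{2} & \tfrac{1}{2}(d_{1}-d_{2}-d_{3})\\ \tfrac{1}{2}(d_{1}-d_{2}-d_{3}) & d_{3}\end{smallmatrix}\right)$ satisfies $4\det M_{\dvec}=\|\dvec\|$. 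Since the cone of positive-definite symmetric $2\times2$ matrices is self-dual under the trace pairing, the linear functional $Y\mapsto\operatorname{tr}(YM_{\dvec})$ vanishes somewhere on the open cone precisely when $M_{\dvec}$ is indefinite, i.e.\ when $\|\dvec\|<0$; thus $\{Q^{\betadvec}=1\}$ meets $\mathbb{H}_{2}$ if and only if $\|\dvec\|<0$, which among the relevant classes forces $\|\dvec\|=-1$. For such $\dvec$ the condition $Q^{\betadvec}=1$ reads $(d_{1}-d_{2}-d_{3})z+d_{2}\tau+d_{3}\sigma\in\ZZ$ with primitive coefficients and $(d_{1}-d_{2}-d_{3})^{2}-4d_{2}d_{3}=1$, i.e.\ it cuts out a rational quadratic divisor of discriminant $1$; these form the single $Sp_{4}(\ZZ)$-orbit of $\{z=0\}$, so every polar component of $F_{g}$ lies on the Humbert surface $H_{1}:=Sp_{4}(\ZZ)\cdot\{z=0\}$. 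Moreover, for $\|\dvec\|=-1$ Corollary~\ref{cor: c(-1,k) and c(0,k)} gives $n^{0}_{\betadvec}=12$ and $n^{h}_{\betadvec}=0$ for $h\geq1$, so $\widetilde{n}_{\dvec}=12\,b_{g}$ with $b_{g}\neq0$ the $\lambda^{2g-2}$-coefficient of $(2\sin\tfrac{\lambda}{2})^{-2}$. Hence the polar divisor of $F_{g}$ on $\mathbb{H}_{2}$ is at most $(2g-2)H_{1}$.

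It then remains to check that $\chi_{10}$ vanishes to order exactly $2$ along $H_{1}$. As $\chi_{10}$ has even weight it is even in $z$ (the element $\operatorname{diag}(1,-1,1,-1)\in Sp_{4}(\ZZ)$ induces $z\mapsto-z$ with automorphy factor $(-1)^{k}$), so $\chi_{10}$ vanishes to order $\geq2$ along $\{z=0\}$; and since its leading Fourier--Jacobi coefficient is a nonzero element of the one-dimensional space $J^{\mathrm{cusp}}_{10,1}$ of weight-$10$ index-$1$ Jacobi cusp forms, whose Taylor expansion in $z$ starts as a nonzero multiple of $\Delta(\tau)z^{2}$ ($\Delta$ the modular discriminant; the lower Taylor--development coefficients vanish for weight and parity reasons), the order is exactly $2$. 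By $Sp_{4}(\ZZ)$-invariance $\operatorname{div}(\chi_{10})=2H_{1}$, so $\chi_{10}^{g-1}$ vanishes to order $2(g-1)=2g-2$ along every component of $H_{1}$, exactly absorbing the pole of $F_{g}$. Therefore $\chi_{10}^{g-1}F_{g}$ is holomorphic near $H_{1}$ and, as a product of holomorphic functions away from $H_{1}$, holomorphic on all of $\mathbb{H}_{2}$; by Koecher it is a holomorphic genus $2$ Siegel modular form of weight $12g-12$.

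The step I expect to be the main obstacle is the middle one: confining the poles of $F_{g}$ to $H_{1}$ with the correct multiplicity, which uses both the BPS bookkeeping above and the classical identification of discriminant-$1$ rational quadratic divisors with the single $Sp_{4}(\ZZ)$-orbit of $\{z=0\}$. One can also bypass the polylogarithm computation and argue straight from Theorem~\ref{thm: Fg are Siegel mod forms}: $F_{g}$ is the Maass (Gritsenko) lift of the \emph{weak} Jacobi form $a_{g}E_{2g}\,\phi_{-2,1}$ of weight $2g-2$ and index $1$, whose only ``principal part'' term is $a_{g}(\zeta+\zeta^{-1})$; the polar divisor of such a lift is supported on $H_{1}$ with multiplicity read off from this principal part, once again of order $2g-2$, and the comparison with $\operatorname{div}(\chi_{10})=2H_{1}$ concludes.
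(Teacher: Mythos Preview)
Your argument is correct, and it is considerably more detailed than what the paper does: the paper gives no self-contained proof at all, but simply records that the corollary follows from Aoki's proof of \cite[Thm~14]{Aoki}. What Aoki shows (building on Borcherds) is the general fact that the Maass lift of a weak Jacobi form of even weight $k>0$ and index $1$ is meromorphic on $\HH_{2}$ with poles only along the Humbert surface $H_{1}=Sp_{4}(\ZZ)\cdot\{z=0\}$, of order controlled by the principal part of the Jacobi form, and that an appropriate power of $\chi_{10}$ clears them. Your explicit analysis --- writing $F_{g}$ as a sum of $\Li_{3-2g}$'s via the paper's Lemma~\ref{lem: Maass  lift in terms of polylogs}, using the self-duality of the positive cone to see that only the $\|\dvec\|=-1$ terms contribute poles on $\HH_{2}$, identifying those as the discriminant-$1$ rational quadratic divisors forming a single $Sp_{4}(\ZZ)$-orbit, and matching the order $2g-2$ against $\operatorname{div}(\chi_{10})=2H_{1}$ --- is exactly that argument specialised to $\psi_{2g-2}=a_{g}E_{2g}\cdot\phi_{-2,1}$. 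Your closing alternative, reading the pole directly from the principal part of the weak Jacobi form, is essentially the paper's citation verbatim.

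One point worth flagging: your polylogarithm identity for $F_{g}$ is established in the paper only as a formal Fourier expansion (equivalently, on the region where all $|Q^{\betadvec}|<1$), so using it to read off the polar divisor on all of $\HH_{2}$ already presupposes that the series continues to a meromorphic function there. That meromorphic continuation is precisely the Borcherds--Aoki input recorded as Theorem~\ref{thm: Maass lift of a weak Jac form is a mero Siegel form}, so your first route still leans on the same black box as the paper's one-line proof; it just makes transparent \emph{why} $\chi_{10}^{g-1}$ is the right power. Once that is granted, your identification of the pole locus and its order is clean and correct.
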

This corollary follows from Aoki's proof of \cite[Thm~14]{Aoki}.  In
terms of the generators $\chi_{10},\chi_{12},\mathcal{E}_{4},
\mathcal{E}_{6}$, the first few potentials are given explicitly as
\begin{align*}
F_{2}& = \frac{1}{240}\left(\frac{\chi_{12}}{\chi_{10}} \right), \\
F_{3}&= \frac{-1}{60480}\left(6\,\mathcal{E}_{4}-5\left(\frac{\chi_{12}}{\chi_{10}}
\right)^{2} \right),\\
F_{4}&=
\frac{1}{3628800}\left(\frac{35}{2}\left(\frac{\chi_{12}}{\chi_{10}}
\right)^{3}-\frac{63}{2}\left(\frac{\chi_{12}}{\chi_{10}}
\right)\mathcal{E}_{4}+ 15 \,\mathcal{E}_{6} \right),\\
F_{5}&=\frac{-1}{106444800}\left(\frac{-175}{3}\left(\frac{\chi_{12}}{\chi_{10}}
\right)^{4}+140\left(\frac{\chi_{12}}{\chi_{10}}
\right)^{2}\mathcal{E}_{4}
-\frac{200}{3}\left(\frac{\chi_{12}}{\chi_{10}}
\right)\mathcal{E}_{6}-14\, \mathcal{E}_{4}^{2} \right).
\end{align*}
Note that the prefactor is given by 
\[
F_{g}(0,0,0) =
\frac{12B_{2g-2}|B_{2g}|}{g(4g-4)(2g-2)!},
\]
the degree 0 genus $g$ Gromov-Witten invariant of $X_{\ban}$.  See
\cite{Pietromonaco_2018} for details of this computation.

\begin{remark}\label{rem: Fg are genus 2 Siegel forms have mirror
symmetry interpretation}
Theorem~\ref{thm: Fg are Siegel mod forms} has a nice interpretation
in terms of mirror symmetry. The Gromov-Witten potentials are
functions of local coordinates on the K\"ahler moduli space. Under mirror
symmetry, these become coordinates on the complex moduli space of the
mirror. Since the arguments of a genus 2 Siegel modular form are
coordinates on the moduli space of genus 2 curves (or Abelian
surfaces), we expect the complex moduli space of $\check{X}_{\ban}$,
the mirror of the banana manifold, to contain a subspace isomorphic to
the moduli space of genus 2 curves. Indeed, it has already been
observed that the mirror of a \emph{local} banana configuration should
be a genus 2 curve
\cite{Abouzaid-Auroux-Katzarkov,Gross-Katzarkov-Ruddat,Ruddat}.
\end{remark}

\begin{remark}
We also determine the genus 0 and genus 1 potentials. Up to degree 0
terms (which are unstable for $g=0$ or $g=1$), $F_{0}$ is the
Skoruppa-Maass lift of the constant $12$ (viewed as a weight 0 modular
form) and $F_{1}$ is the Maass lift of $12\wp\cdot \phi_{-2,1} $ where
$\wp$ is the Weierstrass $\wp$-function\footnote{Interestingly, the Jacobi form
$\phi_{0,1}=12\wp \cdot \phi_{-2,1} $ is also equal to $\half \Ell
(K3)$. }.
\end{remark}

\subsection{Modular forms and lifts}\label{subsec: modular forms and lifts}

\begin{definition}\label{defn: modular form}
A weight $k$ \emph{modular form} is a holomorphic function $f(\tau )$
on $\HH =\{\tau \in \CC , \operatorname{Im} \tau >0 \}$ satisfying
\begin{itemize}
\item For all $\left(\begin{smallmatrix} a&b\\
c&d \end{smallmatrix} \right)\in SL_{2}(\ZZ )$
\[
f\left(\frac{a\tau +b}{c\tau +d} \right) = (c\tau +d)^{k}f(\tau ),
\]
\item $f(\tau )$ admits a Fourier series of the form
\[
f(\tau ) = \sum_{n=0}^{\infty} a_{n}q^{n} ,\quad \quad q=e^{2\pi
i\tau}. 
\]
\end{itemize}
We denote the space of weight $k$ modular forms by $\Mod_{k}$.  We
sometimes abuse notation by writing $f(q)$ for the Fourier expansion
of $f(\tau )$.
\end{definition}

The $2g$-th Eisenstein series is given by
\[
E_{2g}(q) = 1 - \frac{4g}{B_{2g}} \sum_{n=1}^{\infty}\sum_{d|n}
d^{2g-1} q^{n}
\]
where $B_{2g}$ is the $2g$-th Bernoulli number. $E_{2g}$ is a modular
form of weight $2g$ for all $g\geq 2$. 

\begin{definition}\label{defn: Jacobi form}
A \emph{weak Jacobi form} of weight $k$ and index $m$ is a holomorphic
function $\phi (\tau ,z)$ on $\HH \times \CC$ satisfying
\begin{itemize}
\item For all $\left(\begin{smallmatrix} a&b\\
c&d \end{smallmatrix} \right)\in SL_{2}(\ZZ )$
\[
\phi \left(\frac{a\tau +b}{c\tau +d},\frac{z}{c\tau +d} \right) =
(c\tau +d)^{k} e^{\frac{2\pi icmz^{2}}{c\tau +d}}\phi (\tau,z ),
\]
\item  for all $u,v\in \ZZ$, 
\[
\phi (\tau ,z+u\tau +v) = e^{-2\pi im(\tau u^{2}+2 zu)} \phi (\tau ,z),
\]
\item and $\phi$ admits a Fourier expansion of the form
\[
\phi (\tau ,z) = \sum_{n=0}^{\infty}\sum_{l\in \ZZ} c_{\phi }(n,l)q^{n} y^{l}
\]
where $q=e^{2\pi i\tau}$ and $y=e^{2\pi iz}$. 
\end{itemize}
\end{definition}
In the case of index $m=1$, the Fourier coefficients $c_{\phi }(n,l)$
only depend on $4n-l^{2}$ and so we 
will sometimes in this case write
\[
 c_{\phi }(4n-l^{2}) = c_{\phi }(n,l) .
\]
We denote the space of weak Jacobi forms of weight $k$ and index $m$
by $\Jac_{k,m}$.  We sometimes abuse notation by writing $\phi
(q,y)$ for the Fourier expansion of $\phi (\tau ,z)$. A basic example
is given by $\phi_{-2,1}$ whose Fourier expansion is given by
\[
\phi_{-2,1}(q,y) = y^{-1}(1-y)^{2}\prod_{n=1}^{\infty}
\frac{(1-yq^{n})^{2}(1-y^{-1}q^{n})^{2}}{(1-q^{n})^{4}}.
\]
Up to a multiplicative constant, $\phi_{-2,1}$ is the unique weak
Jacobi form of weight -2 and index 1. We also will use the Weierstrass
$\wp$-function:
\[
\wp(q,y) = \frac{1}{12} +\frac{y}{(1-y)^{2}} +\sum_{n=1}^{\infty}
\sum_{d|n} d (y^{d}+y^{-d}-2)q^{n}
\]
and we note that up to a multiplicative constant
\[
\phi_{0,1}=12\cdot \phi_{-2,1}\cdot \wp
\]
is the unique weak Jacobi form of weight 0 and index 1.

The product of a weak Jacobi form of weight $k$ and index $m$ with a
modular form of weight $n$ is a weak Jacobi form of $k+n$ and index
$m$. In particular, multiplication by $\phi_{-2,1}$ defines a map
which we call the \emph{Skoruppa lift}:
\[
\begin{tikzcd}[column sep = large]
\Mod_{2g} \arrow[r,"\text{Skoruppa}"]&
\Jac_{2g-2,1}.
\end{tikzcd}
\]

\begin{definition}\label{defn: Hecke operator}
Let $k$ be even. For $m$ a non-negative integer, the \emph{$m$-th Hecke operator}
\[
V_{m}: \Jac_{k,1}\to \Jac_{k,m}
\]
is given by taking
\[
\phi (q,y) = \sum_{n=0}^{\infty} \sum_{l\in \ZZ} c_{\phi }(n,l)q^{n}y^{l} 
\]
to 
\[
(\phi |V_{m}) = \sum_{n=0}^{\infty} \sum_{r\in \ZZ} \sum_{d|(n,r,m)}
d^{k-1} c_{\phi }\left(\frac{nm}{d^{2}},\frac{r}{d} \right)q^{n}y^{r}
\]
for $m>0$ and
\[
(\phi |V_{0}) = c_{\phi }(0,0)\frac{-B_{k}}{2k} +\sum_{n=0}^{\infty} \sum_{\begin{smallmatrix}r\in \ZZ\\
r>0\text{ if } n=0  \end{smallmatrix}} \sum_{d|(n,r)}
d^{k-1} c_{\phi }\left(0,\frac{r}{d} \right)q^{n}y^{r}
\]
for $m=0$. 
\end{definition}

\begin{definition}\label{defn: Maass lift}
Let $\phi \in \Jac_{k,1}$ with $k$ even. The \emph{Maass lift} of
$\phi$ is given by
\[
ML(\phi ) = \sum_{m=0}^{\infty} (\phi |V_{m})Q^{m}. 
\]
\end{definition}

The following is due to Eichler-Zagier \cite{Eichler-Zagier} in the
case of holomorphic Jacobi forms, and Borcherds
\cite[Thm~9.3]{Borcherds-Inventiones} and Aoki \cite{Aoki} in the
case of weak Jacobi forms\footnote{We thank H. Aoki and G. Oberdieck
for discussion on this point.}:

\begin{theorem}\label{thm: Maass lift of a weak Jac form is a mero
Siegel form} The Maass lift of a weak Jacobi form of weight $k>0$ and
index 1 is a meromorphic genus 2 Siegel modular form of weight $k$. If
the Jacobi form is holomorphic, then the Maass lift is also
holomorphic. Here $Q=e^{2\pi i \sigma}$, $q=e^{2\pi i \tau }$, and
$y=e^{2\pi iz}$ where $\left(\begin{smallmatrix} \tau  &z\\
z&\sigma \end{smallmatrix} \right)\in \HH_{2}$.
\end{theorem}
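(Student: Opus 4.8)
The plan is to verify the two requirements of Definition~\ref{defn: genus 2 Siegel modular form} for $ML(\phi) = \sum_{m\geq 0}(\phi|V_m)Q^m$: that this formal $Q$-series converges on $\HH_2$ to a meromorphic function, and that the resulting function satisfies the weight-$k$ transformation law under $Sp_4(\ZZ)$. For the transformation law I would use that $Sp_4(\ZZ)$ is generated by the parabolic subgroup $P$ associated with the Fourier--Jacobi expansion — which is generated in turn by the translation $\sigma\mapsto\sigma+1$ and the embedded Jacobi group $\Gamma^J = SL_2(\ZZ)\ltimes\ZZ^2$ acting on the $(\tau,z)$ variables — together with a single further involution $\iota$ interchanging $\tau$ and $\sigma$ and fixing $z$. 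Invariance under $P$ is built into the construction: invariance under $\sigma\mapsto\sigma+1$ is automatic for a $Q$-series, and the fact that each Fourier--Jacobi coefficient $\phi|V_m$ lies in $\Jac_{k,m}$ — which is precisely what makes $V_m$ in Definition~\ref{defn: Hecke operator} well defined (Eichler--Zagier) — makes the $\Gamma^J$-transformation law hold term by term. So the entire content of modularity is the single identity expressing $\iota$-invariance.

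To make that identity explicit I would compute the Fourier coefficients of the lift. Writing $ML(\phi)(\Omega) = \sum_{n,r,m} A(n,r,m)\, q^n y^r Q^m$ with $q=e^{2\pi i\tau}$, $y=e^{2\pi iz}$, $Q=e^{2\pi i\sigma}$, Definition~\ref{defn: Hecke operator} gives, away from the constant term,
\[
A(n,r,m) = \sum_{d\,\mid\,\gcd(n,r,m)} d^{k-1}\, c_\phi\!\left(\tfrac{4nm-r^2}{d^2}\right),
\]
with the constant term $A(0,0,0)$ a prescribed multiple of $c_\phi(0)$. The elementary but decisive observation is that $A(n,r,m)$ is symmetric in $n$ and $m$ — it depends on $(n,r,m)$ only through $4nm-r^2$ and $\gcd(n,r,m)$ — and, since $\iota$ has trivial automorphy factor for even $k$, this symmetry is exactly $ML(\phi)\circ\iota = ML(\phi)$. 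Together with the $P$-invariance above, this gives the weight-$k$ transformation law for the formal expansion.

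What remains — and what I expect to be the real obstacle — is to promote the formal series to an honest function and to control its singularities. When $\phi$ is holomorphic one has $c_\phi(D)=0$ for $D<0$, the divisor sums $A(n,r,m)$ grow only polynomially, and the series converges absolutely and locally uniformly on $\HH_2$ to a holomorphic function; this is the classical Eichler--Zagier case \cite{Eichler-Zagier}, and the two paragraphs above then yield a genuine holomorphic Siegel modular form. When $\phi$ is merely weak, the coefficients $c_\phi(-l^2)$ with $l\neq 0$ may be nonzero and $ML(\phi)$ acquires poles along diagonal Humbert divisors, so convergence must be argued by hand. Here I would use linearity of $ML$ and the fact that $\Jac_{*,1}$ is a free module over $\CC[E_4,E_6]$ on $\phi_{0,1}$ and $\phi_{-2,1}$ to reduce to the two generating cases $ML(f\,\phi_{0,1})$ and $ML(g\,\phi_{-2,1})$, and then invoke Aoki's computation \cite[Thm~14]{Aoki} identifying these with explicit rational expressions in the holomorphic generators $\chi_{10},\chi_{12},\mathcal{E}_4,\mathcal{E}_6$ — which simultaneously establishes convergence to a meromorphic function and exhibits the denominators. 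Alternatively one can appeal directly to Borcherds' singular theta correspondence \cite[Thm~9.3]{Borcherds-Inventiones}, which sends a weakly holomorphic modular input of this type to a meromorphic automorphic form with explicitly controlled divisor. The coefficient bookkeeping of the first two paragraphs is routine; the genuine work, which I would cite rather than reprove, is this convergence and pole analysis in the weak case due to Borcherds and Aoki.
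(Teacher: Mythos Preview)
The paper does not prove this theorem at all: it is stated as a known result, attributed to Eichler--Zagier \cite{Eichler-Zagier} in the holomorphic case and to Borcherds \cite[Thm~9.3]{Borcherds-Inventiones} and Aoki \cite{Aoki} in the weak case, with no further argument. Your proposal is therefore not competing with a proof in the paper but rather supplying a sketch where the paper simply cites. Your outline is the standard one and is consistent with those references: the $Sp_4(\ZZ)$-transformation law via the parabolic subgroup plus the $\tau\leftrightarrow\sigma$ involution, the symmetry $A(n,r,m)=A(m,r,n)$ of the Fourier coefficients, and the reduction of the analytic content in the weak case to Aoki's explicit identification and Borcherds' singular theta lift. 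Since you ultimately defer the convergence and pole analysis to exactly the same sources the paper cites, your proposal and the paper's treatment are in agreement, with yours being more expository.
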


We may reformulate the Maass lift in terms of polylogarithms. Let
\[
\Li_{a}(x) = \sum_{n=1}^{\infty} n^{-a}x^{n}.
\]
Then a straightforward computation yields the following
\begin{lemma}\label{lem: Maass  lift in terms of polylogs}
Let $\phi =\sum_{n=0}^{\infty} \sum_{l\in \ZZ} c_{\phi }(4n-l^{2})q^{n} y^{l} \in
\Jac_{k,1}$ with $k$ even. Then
\[
ML(\phi ) = c_{\phi }(0)\frac{-B_{k}}{2k} + \sum_{n,m\geq 0}\sum_{\begin{smallmatrix} l\in \ZZ\\
l>0\text{ if }(n,m)=(0,0) \end{smallmatrix}} c_{\phi }(4nm-l^{2})\Li_{1-k}
(Q^{m}q^{n}y^{l}). 
\]
\end{lemma}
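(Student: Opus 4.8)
The plan is to expand $ML(\phi)=\sum_{m\geq 0}(\phi|V_m)Q^m$ directly from Definitions~\ref{defn: Hecke operator} and~\ref{defn: Maass lift}, to treat the $m=0$ term separately from the $m>0$ terms, and in each case to collapse the divisor sum appearing in $V_m$ into a polylogarithm by promoting the common divisor $d$ to an independent summation variable. Everything in sight is a formal $q,y,Q$-series --- a power series in $Q$ whose coefficients are formal in $q$ and Laurent in $y$ --- so the reorderings below are legitimate term by term and no convergence issue arises.

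For $m>0$, the key point is that the index~$1$ hypothesis forces $c_\phi(n,l)$ to depend only on $4n-l^{2}$, so whenever $d\mid(n,r,m)$ one has $c_\phi\!\left(\tfrac{nm}{d^{2}},\tfrac{r}{d}\right)=c_\phi\!\left(\tfrac{4nm-r^{2}}{d^{2}}\right)$. Writing $n=da$, $r=db$, $m=dc$ with $d\geq 1$, $a\geq 0$, $b\in\ZZ$, $c\geq 1$ makes the divisibility condition automatic and yields $\tfrac{4nm-r^{2}}{d^{2}}=4ac-b^{2}$, so
\[
\sum_{m>0}(\phi|V_m)Q^m=\sum_{a\geq 0,\ b\in\ZZ,\ c\geq 1}c_\phi(4ac-b^{2})\sum_{d\geq 1}d^{k-1}\bigl(q^{a}y^{b}Q^{c}\bigr)^{d}.
\]
Since $\Li_{1-k}(x)=\sum_{d\geq 1}d^{k-1}x^{d}$, the inner sum is $\Li_{1-k}(q^{a}y^{b}Q^{c})$, and this is exactly the part of the claimed formula indexed by $(n,l,m)=(a,b,c)$ with $m\geq 1$, where the clause ``$l>0$ if $(n,m)=(0,0)$'' is vacuous.

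For $m=0$, the constant $c_\phi(0,0)\tfrac{-B_{k}}{2k}=c_\phi(0)\tfrac{-B_{k}}{2k}$ is precisely the first summand of the Lemma. In the remaining double sum defining $(\phi|V_0)$ the same substitution $n=da$, $r=db$ (now with $c=0$) turns $c_\phi(0,r/d)$ into $c_\phi(-b^{2})=c_\phi(4\cdot a\cdot 0-b^{2})$, converts the condition ``$r>0$ if $n=0$'' into ``$b>0$ if $a=0$'', and again collapses the $d$-sum to $\Li_{1-k}(q^{a}y^{b})=\Li_{1-k}(q^{a}y^{b}Q^{0})$. In this branch $a=0$ is the same as $(a,c)=(0,0)$, so the resulting constraint matches the statement. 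Summing the $m=0$ and $m>0$ contributions and renaming $(a,b,c)\to(n,l,m)$ gives the displayed identity.

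The argument is, as advertised, little more than bookkeeping; the only steps needing genuine care --- and hence the nearest thing to an obstacle --- are checking that the reindexing sending a quadruple $(n,r,m;d)$ with $d\mid(n,r,m)$ to $(d;a,b,c)=(d;n/d,r/d,m/d)$ is a bijection onto the stated index set, and verifying that the boundary data (the clause ``$r>0$ if $n=0$'' in $V_0$ together with the tacit exclusion of $(n,r,m)=(0,0,0)$) reassemble into the single clause ``$l>0$ if $(n,m)=(0,0)$'' of the Lemma.
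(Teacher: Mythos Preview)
Your proof is correct and is exactly the ``straightforward computation'' that the paper alludes to but omits. The reindexing $(n,r,m;d)\mapsto(d;a,b,c)$ with $n=da$, $r=db$, $m=dc$ is the natural way to turn the divisor sum in $V_m$ into a geometric-type series and hence into $\Li_{1-k}$, and you have correctly tracked the boundary conditions in the $m=0$ case.
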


\subsection{The $\lambda$ expansion of $\Ell_{q,y}(\CC^{2} ,t)$.}

Recall from \S~\ref{sec: elliptic genera of Hilb(C2)} that the
coefficients $c(d,k)$ are defined by the expansion of the equivariant
elliptic genus of $\CC^{2}$:
\[
\Ell_{q,y}(\CC^{2},t) = \sum_{n=0}^{\infty} \sum_{l,k\in \ZZ}
\, c(4n-l^{2},k)\, q^{n}y^{l} t^{k} .
\]
Let $t=e^{i\lambda}$. Theorem 4.4 in Zhou
\cite{Zhou-regularized-elliptic-genera} gives the expansion of
$\Ell_{q,y}(\CC^{2},t)$ as a Laurent series in $\lambda$. His result
is\footnote{ We use the $r=1$ case of Zhou's theorem. Our $\lambda$ is
$2\pi t$ in Zhou's notation (his $t$ is not our $t$). There is a typo in his formula: the $\eta (\tau )$ should be $\eta (\tau
)^{3}$. }
\[
\Ell_{q,y}(\CC^{2}, e^{i\lambda}) = \lambda^{-2} \cdot
\phi_{-2,1}(q,y) \cdot \left(1+\wp(q,y)\lambda^{2}+\sum_{g=2}^{\infty}
\frac{|B_{2g}|}{2g(2g-2)!} E_{2g}(q)\lambda^{2g}  \right).
\]

Let
\[
\psi_{2g-2}(q,y) = \operatorname{Coef}_{\lambda^{2g-2}}
\left[\Ell_{q,y}(\CC^{2},e^{i\lambda}) \right]
\]
so that Zhou's result can be expressed as
\[
\psi_{2g-2}(q,y) = \phi_{-2,1}(q,y)\cdot \begin{cases}
1&g=0\\
\wp(q,y)&g=1\\
\frac{|B_{2g}|}{2g(2g-2)!} E_{2g}(q) &g>1
\end{cases}
\]

We observe that $\psi_{2g-2}$ is a weak Jacobi form of weight $2g-2$
and index 1 and consequently has an expansion
\[
\psi_{2g-2}(q,y) = \sum_{n=0}^{\infty} \sum_{l\in \ZZ} c_{2g-2}(4n-l^{2})q^{n}y^{l},
\]
which defines the coefficients $c_{2g-2}(d)$.

Comparing coefficients in the $\lambda$ and the $t$ expansions of
$\Ell_{q,y}(\CC^{2},t=e^{i\lambda})$ we get the following fundamental
relationship between the coefficients $c_{2g-2}(d)$ and $c(d,l)$:
\begin{equation}\label{eqn: c_{2g-2}(d) compared to c(d,l)}
\sum_{g=0}^{\infty} c_{2g-2}(d)\lambda^{2g-2} = \sum_{l\in \ZZ}
c(d,l)e^{il\lambda }.
\end{equation}

\subsection{Gromov-Witten potentials via the GW/DT 
correspondence}\label{subsec: F_{g} via GW/DT}

$\quad $

The GW/DT correspondence is a conjectural equivalence between the
Gromov-Witten and the Donaldson-Thomas invariants of a Calabi-Yau
threefold \cite{MNOP1}. It has been proven for a broad class of
Calabi-Yau threefolds including complete intersections in products of
projective spaces \cite{Pandharipande-Pixton-DTGW}, which
unfortunately does not include $X_{\ban}$.

However, if we assume that the GW/DT correspondence holds for
$X_{\ban}$, we may compute the genus $g$ Gromov-Witten potentials from
our formula for the Donaldson-Thomas partition function
(Theorem~\ref{thm: formula for Zban}). We define the reduced genus $g$
Gromov-Witten potential (for banana curve classes) of $X_{\ban}$ by
\[
F'_{g}(Q_{1},Q_{2},Q_{3}) = \sum_{\dvec >0}
GW^{g}_{\dvec } (X_{\ban}) Q_{1}^{d_{1}} Q_{2}^{d_{2}}
Q_{3}^{d_{3}} .
\]
Here $GW^{g}_{\dvec }(X_{\ban})$ denotes the genus $g$
Gromov-Witten invariant in the class
$\beta_{\dvec}= d_{1}C_{1}+d_{2}C_{2}+d_{3}C_{3}$ and $\dvec >0$ means
$d_{i}\geq 0$ and  $(d_{1},d_{2},d_{3}) \neq (0,0,0)$.

The GW/DT correspondence asserts that 
\[
\sum_{g=0}^{\infty} F'_{g}(Q_{1},Q_{2},Q_{3})
\lambda^{2g-2} = \log
\left(\frac{Z_{\Gamma}(X_{\ban})}{Z_{\Gamma}(X_{\ban})|_{Q_{i}=0}}  \right) 
\]
under the change of variables $p=e^{i\lambda}$.

We now prove Theorem~\ref{thm: Fg are Siegel mod forms}. Applying
Theorem~\ref{thm: formula for Zban} and using Equation~\eqref{eqn:
c_{2g-2}(d) compared to c(d,l)}, we get
\begin{align*}
\sum_{g=0}^{\infty} F'_{g}(Q_{1},Q_{2},Q_{3}) \lambda^{2g-2} &= \log
\left(\prod_{\dvec >0}\, \prod_{k\in \ZZ} (1-p^{k}
Q_{1}^{d_{1}}Q_{2}^{d_{2}}Q_{3}^{d_{3}})^{-12c(||\dvec ||,k)} \right)\\
&=\sum_{\dvec >0} \, \sum_{k\in \ZZ} 12c(||\dvec
||,k)\sum_{n=1}^{\infty} \frac{1}{n} p^{nk} Q_{1}^{nd_{1}} Q_{2}^{nd_{2}} Q_{3}^{nd_{3}}\\
&=12 \sum_{\dvec >0} \sum_{n=1}^{\infty} \frac{1}{n} Q_{1}^{nd_{1}}
Q_{2}^{nd_{2}} Q_{3}^{nd_{3}} \sum_{g=0}^{\infty} c_{2g-2} (||\dvec
||) n^{2g-2} \lambda^{2g-2}. 
\end{align*}

Thus we find that
\begin{align*}
F'_{g}(Q_{1},Q_{2},Q_{3}) &= 12 \sum_{\dvec >0} c_{2g-2}(||\dvec ||)
\sum_{n=1}^{\infty} n^{2g-3}  Q_{1}^{nd_{1}}
Q_{2}^{nd_{2}} Q_{3}^{nd_{3}}\\
&= 12 \sum_{\dvec >0} c_{2g-2} (||\dvec ||) \Li_{3-2g} (  Q_{1}^{d_{1}}
Q_{2}^{d_{2}} Q_{3}^{d_{3}} ) .
\end{align*}

Substituting
\[
Q_{1} = e^{2\pi iz} = y,\quad \quad Q_{2} = e^{2\pi i(\tau -z)} =
qy^{-1},\quad \quad  Q_{3} = e^{2\pi i(\sigma  -z)} = Qy^{-1},
\]
reindexing by 
\[
d_{1} = l+n+m,\quad d_{2}=n,\quad d_{3}=m,
\]
so that $ ||\dvec ||=4nm-l^{2}$, and noting that $\dvec >0$ is
equivalent to $n,m\geq 0,l\in \ZZ $ and $l>0$ if $n=m=0$, we find that
\[
F_{g}'(q,y,Q) = 12 \sum_{n,m\geq 0} \sum_{\begin{smallmatrix} l\in \ZZ\\
l>0\text{ if }(n,m)=(0,0) \end{smallmatrix}} c_{2g-2}(4nm-l^{2}) \Li_{3-2g}
(Q^{m}q^{n}y^{l}). 
\] 

For $g>1$, the full genus $g$ Gromov-Witten potential is the reduced
potential plus the constant term:
\[
F_{g} = GW^{g}_{\zerovec} (X_{\ban}) + F'_{g}.
\]
Using for example the formulas in \cite[\S~2.1]{MNOP1} we know
\begin{align*}
GW^{g}_{\zerovec}(X_{\ban})& = (-1)^{g}\frac{1}{2} e(X_{\ban})
\frac{|B_{2g}|\cdot |B_{2g-2}|}{2g(2g-2)(2g-2)!} \\
&=12\cdot \left(\frac{-B_{2g-2}}{4g-4} \right) \cdot
\left(\frac{-|B_{2g}|}{g(2g-2)!} \right). 
\end{align*}
Examining the $y^{0}q^{0}$ term of $\psi_{2g-2}(q,y)=\phi_{-2,1}\cdot
\frac{|B_{2g}|}{2g(2g-2)!}E_{2g}$ for $g>1$ yields
\[
c_{2g-2}(0) = \frac{-|B_{2g}|}{g(2g-2)!}
\]
and hence we find
\[
F_{g} = 12\cdot \left(c_{2g-2}(0)\frac{-B_{2g-2}}{4g-4} +  \sum_{n,m\geq 0} \sum_{\begin{smallmatrix} l\in \ZZ\\
l>0\text{ if }(n,m)=(0,0) \end{smallmatrix}} c_{2g-2}(4nm-l^{2})
\Li_{3-2g} (Q^{m}q^{n}y^{l}) \right).
\]
By Lemma~\ref{lem: Maass  lift in terms of polylogs}, the above is
exactly the
Maass lift of $12 \psi_{2g-2}$, and hence we find that $F_{g}$ is the
Skoruppa-Maass lift of $a_{g}E_{2g}$ where
\[
a_{g} = \frac{6|B_{2g}|}{g(2g-2)!}. 
\]
This completes the proof of Theorem~\ref{thm: Fg are Siegel mod
forms}.

\begin{remark}\label{rem: F0 and F1 are formal Skoruppa-Maass lifts}
Note that the proof of the theorem also shows that up to constant
terms, the genus 0 and the genus 1 Gromov-Witten potentials are given
by the Maass lifts of $12\phi_{-2,1}$ and $12\phi_{-2,1}\wp =
\phi_{0,1}$ respectively. Although these are weak Jacobi forms of
index 1, they are not of positive weight, and hence their Maass lifts
are not guarenteed to be Siegel forms. See \cite{Pietromonaco_2018}
for a further discussion.
\end{remark}

\subsection{Gopakumar-Vafa invariants}\label{subsec: appendix tables
of GV invariants}

In this section we give tables of values of the Gopakumar-Vafa
invariants $n^{g}_{a}(X_{\ban})$ for small values of $g$ and
$a$. Since all values are divisible by 12, we list $\frac{1}{12}\,
n^{g}_{a}(X_{\ban})$ (which can also be regarded as the Gopakumar-Vafa
invariants of a local banana configuration). We note that the non-zero
values have $a$ congruent to $0$ or $-1$ modulo 4 and so we organize the
tables as such.

\begin{center}
\begin{tabular}{|c|ccccccc|}
\hline

&&&&&&&\\

    $\frac{1}{12}\, n^{g}_{ 4n-1}(X_{\ban })$
 & {$g=0$} & {$g=1$} & {$g=2$} & {$g=3$} & {$g=4$} & {$g=5$} & {$g=6$} \\
 &&&&&&&\\
\hline 
    $n=0$  & 1     &    0     &  0    &    0     &    0     &    0   & 0   \\
    $n=1$  &  8    &   -6     &  1    &    0     &    0     &    0   &  0 \\
    $n=2$  & 39 &   -46    &  17   &    -2     &    0     &     0    &   0 \\
    $n=3$  & 152      &    -242    &   139    &    -34     &     3    &     0     &    0 \\
    $n=4$  & 513  &    -1024  & 800  &      -304   &    56     &      -4    &  0 \\ 
    $n=5$  & 1560  & -3730 & 3683  &    -1912 &    548      &      -82    &  5 \\ \hline 
\end{tabular}
\end{center}

\begin{center}
\begin{tabular}{|c|ccccccc|}
\hline

&&&&&&&\\

    $\frac{1}{12}\, n^{g}_{4n}(X_{\ban})$ & {$g=0$} & {$g=1$} & {$g=2$} & {$g=3$} & {$g=4$} & {$g=5$} & {$g=6$} \\&&&&&&&\\ \hline 
    $n=0$  & -2    &    1     &  0    &    0     &    0     &    0   &  0 \\
    $n=1$  &  -12    &    10     &  -2    &    0     &     0    &     0    &  0    \\ 
    $n=2$  &  -56  &   72  &  -30  &   4    &      0     &     0    &   0     \\
    $n=3$  & -208      &   352    &    -220  &      60     &     -6    &     0      &   0 \\ 
    $n=4$  & -684  & 1434 & -1194  &   492  &    -100      &      8    &  0 \\ 
    $n=5$  & -2032  & 5056 & -5252  &    2908 &    -902      &      148    &  -10 \\ \hline 
\end{tabular}
\end{center}

\bibliography{mainbiblio} \bibliographystyle{plain}

\begin{thebibliography}{10}

\bibitem{Abouzaid-Auroux-Katzarkov}
Mohammed Abouzaid, Denis Auroux, and Ludmil Katzarkov.
\newblock Lagrangian fibrations on blowups of toric varieties and mirror
  symmetry for hypersurfaces.
\newblock {\em Publ. Math. Inst. Hautes \'Etudes Sci.}, 123:199--282, 2016.
\newblock arXiv:math/1205.0053.

\bibitem{Aoki}
Hiroki Aoki.
\newblock {Generalization of the Weierstrass $\wp$ function and Maass lifts of
  weak Jacobi forms}.
\newblock To appear in Pure and Applied Mathematics Quarterly.

\bibitem{Behrend-Micro}
Kai Behrend.
\newblock Donaldson-{T}homas type invariants via microlocal geometry.
\newblock {\em Ann. of Math. (2)}, 170(3):1307--1338, 2009.
\newblock arXiv:math/0507523.

\bibitem{Behrend-Fantechi08}
Kai Behrend and Barbara Fantechi.
\newblock Symmetric obstruction theories and {H}ilbert schemes of points on
  threefolds.
\newblock {\em Algebra Number Theory}, 2(3):313--345, 2008.
\newblock arXiv:math/0512556.

\bibitem{Bialynicki-Birula}
A.~Bia{\l}ynicki-Birula.
\newblock Some theorems on actions of algebraic groups.
\newblock {\em Ann. of Math. (2)}, 98:480--497, 1973.

\bibitem{Borcherds-Inventiones}
Richard~E. Borcherds.
\newblock Automorphic forms on {${\rm O}_{s+2,2}({\bf R})$} and infinite
  products.
\newblock {\em Invent. Math.}, 120(1):161--213, 1995.

\bibitem{Borisov-Libgober}
Lev Borisov and Anatoly Libgober.
\newblock Mc{K}ay correspondence for elliptic genera.
\newblock {\em Ann. of Math. (2)}, 161(3):1521--1569, 2005.
\newblock arXiv:math/0206241.

\bibitem{Bosch-Lutkebohmert-Raynaud}
Siegfried Bosch, Werner L\"utkebohmert, and Michel Raynaud.
\newblock {\em N\'eron models}, volume~21 of {\em Ergebnisse der Mathematik und
  ihrer Grenzgebiete (3) [Results in Mathematics and Related Areas (3)]}.
\newblock Springer-Verlag, Berlin, 1990.

\bibitem{Bridgeland-PTDT}
Tom Bridgeland.
\newblock Hall algebras and curve-counting invariants.
\newblock {\em J. Amer. Math. Soc.}, 24(4):969--998, 2011.
\newblock arXiv:1002.4374.

\bibitem{Bryan-Cadman-Young}
Jim Bryan, Charles Cadman, and Ben Young.
\newblock The orbifold topological vertex.
\newblock {\em Adv. Math.}, 229(1):531--595, 2012.
\newblock arXiv:math/1008.4205.

\bibitem{Bryan-Kool-Young}
Jim Bryan, Martijn Kool, and Benjamin Young.
\newblock Trace identities for the topological vertex.
\newblock {\em Selecta Math. (N.S.)}, 24(2):1527--1548, 2018.
\newblock arXiv:math/1603.05271.

\bibitem{BOPY}
Jim Bryan, Georg Oberdieck, Rahul Pandharipande, and Qizheng Yin.
\newblock {Curve counting on abelian surfaces and threefolds}.
\newblock {\em Algebraic Geometry (Compositio)}, 5(4):398--463, 2018.
\newblock arXiv:math/1506.00841.

\bibitem{Bryan-Pietromonaco}
Jim Bryan and Stephen Pietromonaco.
\newblock {The geometry and arithmetic of rigid banana manifolds}.
\newblock In preparation.

\bibitem{Deligne-Rapoport}
P.~Deligne and M.~Rapoport.
\newblock Les sch\'emas de modules de courbes elliptiques.
\newblock pages 143--316. Lecture Notes in Math., Vol. 349, 1973.

\bibitem{DMVV}
Robbert Dijkgraaf, Gregory Moore, Erik Verlinde, and Herman Verlinde.
\newblock Elliptic genera of symmetric products and second quantized strings.
\newblock {\em Comm. Math. Phys.}, 185(1):197--209, 1997.
\newblock arXiv:hep-th/9608096.

\bibitem{Eichler-Zagier}
Martin Eichler and Don Zagier.
\newblock {\em The theory of {J}acobi forms}, volume~55 of {\em Progress in
  Mathematics}.
\newblock Birkh\"auser Boston, Inc., Boston, MA, 1985.

\bibitem{Go-Va}
Rajesh Gopakumar and Cumrun Vafa.
\newblock M-theory and topological strings--{II}, 1998.
\newblock Preprint, hep-th/9812127.

\bibitem{Gross-Katzarkov-Ruddat}
Mark Gross, Ludmil Katzarkov, and Helge Ruddat.
\newblock Towards mirror symmetry for varieties of general type.
\newblock {\em Adv. Math.}, 308:208--275, 2017.
\newblock arXiv:math/1202.4042.

\bibitem{Hollowood-Iqbal-Vafa}
Timothy Hollowood, Amer Iqbal, and Cumrun Vafa.
\newblock Matrix models, geometric engineering and elliptic genera.
\newblock {\em J. High Energy Phys.}, (3):069, 81, 2008.
\newblock arXiv:hep-th/0310272.

\bibitem{Igusa-1962}
Jun-ichi Igusa.
\newblock On {S}iegel modular forms of genus two.
\newblock {\em Amer. J. Math.}, 84:175--200, 1962.

\bibitem{Igusa-1967}
Jun-ichi Igusa.
\newblock Modular forms and projective invariants.
\newblock {\em Amer. J. Math.}, 89:817--855, 1967.

\bibitem{Jiang-Motivic-Milnor-fibre}
Yun~Feng Jiang.
\newblock Motivic {M}ilnor fibre of cyclic {$L_\infty$}-algebras.
\newblock {\em Acta Math. Sin. (Engl. Ser.)}, 33(7):933--950, 2017.

\bibitem{Kanazawa-Lau}
Atsushi Kanazawa and Siu-Cheong Lau.
\newblock {Local Calabi-Yau manifolds of affine type A and open Yau-Zaslow
  formula via SYZ mirror symmetry}.
\newblock arXiv:math/1605.00342.

\bibitem{Katz-Snowbird}
Sheldon Katz.
\newblock Gromov-{W}itten, {G}opakumar-{V}afa, and {D}onaldson-{T}homas
  invariants of {C}alabi-{Y}au threefolds.
\newblock In {\em Snowbird lectures on string geometry}, volume 401 of {\em
  Contemp. Math.}, pages 43--52. Amer. Math. Soc., Providence, RI, 2006.
\newblock arXiv:math/0408266.

\bibitem{Li-Liu-Zhou}
Jun Li, Kefeng Liu, and Jian Zhou.
\newblock Topological string partition functions as equivariant indices.
\newblock {\em Asian J. Math.}, 10(1):81--114, 2006.
\newblock arXiv:AG.0412089.

\bibitem{Macdonald}
I.~G. Macdonald.
\newblock {\em Symmetric functions and {H}all polynomials}.
\newblock Oxford Mathematical Monographs. The Clarendon Press Oxford University
  Press, New York, second edition, 1995.
\newblock With contributions by A. Zelevinsky, Oxford Science Publications.

\bibitem{MacPherson-Annals74}
R.~D. MacPherson.
\newblock Chern classes for singular algebraic varieties.
\newblock {\em Ann. of Math. (2)}, 100:423--432, 1974.

\bibitem{MNOP1}
D.~Maulik, N.~Nekrasov, A.~Okounkov, and R.~Pandharipande.
\newblock Gromov-{W}itten theory and {D}onaldson-{T}homas theory. {I}.
\newblock {\em Compos. Math.}, 142(5):1263--1285, 2006.
\newblock arXiv:math.AG/0312059.

\bibitem{Maulik-Pandharipande-Thomas}
D.~Maulik, R.~Pandharipande, and R.~P. Thomas.
\newblock Curves on {$K3$} surfaces and modular forms.
\newblock {\em J. Topol.}, 3(4):937--996, 2010.
\newblock With an appendix by A. Pixton.

\bibitem{Morrison-LookingGlass}
David~R. Morrison.
\newblock Through the looking glass.
\newblock In {\em Mirror symmetry, {III} ({M}ontreal, {PQ}, 1995)}, volume~10
  of {\em AMS/IP Stud. Adv. Math.}, pages 263--277. Amer. Math. Soc.,
  Providence, RI, 1999.
\newblock arXiv:math/9705028.

\bibitem{Ok-Re-Va}
Andrei Okounkov, Nikolai Reshetikhin, and Cumrun Vafa.
\newblock Quantum {C}alabi-{Y}au and classical crystals.
\newblock In {\em The unity of mathematics}, volume 244 of {\em Progr. Math.},
  pages 597--618. Birkh\"auser Boston, Boston, MA, 2006.
\newblock arXiv:hep-th/0309208.

\bibitem{Pandharipande-Pixton-DTGW}
R.~Pandharipande and A.~Pixton.
\newblock Gromov-{W}itten/{P}airs correspondence for the quintic 3-fold.
\newblock {\em J. Amer. Math. Soc.}, 30(2):389--449, 2017.

\bibitem{Pandharipande-Thomas-KKV}
R.~Pandharipande and R.~P. Thomas.
\newblock The {K}atz--{K}lemm--{V}afa conjecture for {$K3$} surfaces.
\newblock {\em Forum Math. Pi}, 4:e4, 111, 2016.
\newblock arXiv:math/1404.6698.

\bibitem{Pietromonaco_2018}
Stephen Pietromonaco.
\newblock {An introduction to modern enumerative geometry with applications to
  the banana manifold}.
\newblock 2018 UBC masters thesis. Available:
  http://www.math.ubc.ca/~jbryan/papers.html.

\bibitem{Rosenlicht}
Maxwell Rosenlicht.
\newblock Some basic theorems on algebraic groups.
\newblock {\em Amer. J. Math.}, 78:401--443, 1956.

\bibitem{Ruddat}
Helge Ruddat.
\newblock Perverse curves and mirror symmetry.
\newblock {\em J. Algebraic Geom.}, 26(1):17--42, 2017.
\newblock arXiv: math/1309.1803.

\bibitem{Toda-2012-Kyoto}
Yukinobu Toda.
\newblock Stability conditions and curve counting invariants on {C}alabi-{Y}au
  3-folds.
\newblock {\em Kyoto J. Math.}, 52(1):1--50, 2012.
\newblock arXiv:math/1103.4229.

\bibitem{Waelder}
Robert Waelder.
\newblock Equivariant elliptic genera and local {M}c{K}ay correspondences.
\newblock {\em Asian J. Math.}, 12(2):251--284, 2008.
\newblock arXiv:math/0701336.

\bibitem{Zhou-regularized-elliptic-genera}
Jian Zhou.
\newblock {On regularized elliptic genera of ALE spaces}.
\newblock arXiv:math/1511.01191.

\end{thebibliography}

\end{document}